\theoremstyle{plain}
\newtheorem{Theorem}{Theorem}[section]
\newtheorem{Lem}[Theorem]{Lemma}
\newtheorem{Prop}[Theorem]{Proposition}
\newtheorem{Cor}[Theorem]{Corollary}
\newtheorem{Not}[Theorem]{Notation}
\newtheorem{Def-Prop}[Theorem]{Definition-Proposition}
\theoremstyle{definition}
\newtheorem{Def}[Theorem]{Definition}
\theoremstyle{remark}
\newtheorem*{Rem}{Remark}
\date{}
\DeclareMathOperator{\Index}{Index}
\DeclareMathOperator{\Ad}{Ad}
\DeclareMathOperator{\Irr}{Irr}
\newcommand{\omin}{\otimes_\mathrm{min}}
\newcommand{\algtensor}{\otimes_\mathrm{alg}}
\DeclareMathOperator{\smbox}{\Box}
\def\thm@space@setup{%
  \thm@preskip=\parskip \thm@postskip=0pt
}
\newlist{thmenum}{enumerate}{1}%
\setlist[thmenum]{label={\rm (\arabic*)}}%
\begin{document}

\hyphenation{Wo-ro-no-wicz com-pa-ti-bi-li-ty}

\title{A construction of finite index C$^*$-algebra inclusions from free actions of compact quantum groups}
\author{K. De Commer\\
\small Department of Mathematics, University of Cergy-Pontoise,\\
\small UMR CNRS 8088, F-95000 Cergy-Pontoise, France\\
\small e-mail: Kenny.De-Commer@u-cergy.fr \\
\and M. Yamashita\footnote{Supported in part by the ERC Advanced Grant 227458 OACFT ``Operator Algebras and Conformal Field Theory''}\\
\small Dipartimento di Matematica,\\
\small Universit\`{a} degli Studi di Roma ``Tor Vergata'',\\
\small Via della Ricerca Scientifica 1, 00133 Rome, Italy\\
\small e-mail: yamashit@mat.uniroma2.it}

\maketitle

\begin{abstract}
\noindent Given an action of a compact quantum group on a unital C$^*$-algebra, one can consider the associated Wassermann-type C$^*$-algebra inclusions. One hereby amplifies the
original action with the adjoint action associated with a finite dimensional unitary representation, and considers the induced inclusion of fixed point algebras. We show that this
inclusion is a finite index inclusion of C$^*$-algebras when the quantum group acts freely. Along the way, two natural definitions of freeness for a compact quantum group action, due
respectively to D. Ellwood and M. Rieffel, are shown to be equivalent.
\end{abstract}

\emph{Keywords}: compact quantum groups; C$^*$-algebras; Hilbert modules; free actions

AMS 2010 \emph{Mathematics Subject Classification}: 17B37, 81R50, 46L08


\section{Introduction}

One of the fundamental concepts in the study of locally compact quantum groups is the notion of `noncommutative principal bundles', or the free and proper actions on `noncommutative
spaces', the noncommutative spaces being represented by various algebraic structures. In the C$^*$-algebraic study of such principal bundles, it turned out that there can be two
different ways to formulate the freeness of an action.

The first is a certain density condition on the coaction map, called the \emph{Ellwood condition}, introduced by D.A.~Ellwood~\cite{Ell1}. If the algebra is commutative, so that we
are back in the classical case of a locally compact group acting continuously on a locally compact space, this condition is equivalent to freeness in the ordinary sense.  In the
purely algebraic setting, the Ellwood condition corresponds to the notion of a Hopf-Galois extension.

The second is the notion of a \emph{saturated} action, which is more suited for the study of K-theory of operator algebras. It was introduced by M.~Rieffel~\cite{Phi1,Rie1} in the
setting of actions by compact groups on C$^*$-algebras.  Since it is stated  as a condition on the structure of the crossed product algebra, there is a straightforward generalization
to the case of compact quantum group actions.  For example, the case of \emph{finite} quantum groups was studied by W.~Szyma\'nski and C.~Peligrad~\cite{Pel1}.

It has been known that these conditions are closely related to each other. For example, when $\mathbb{G}$ is a compact Lie group, C.~Wahl~\cite[Proposition 9.8]{Wah1} showed that they
are equivalent.

Our first main result is that the above two notions actually coincide in the setting of compact quantum group actions.

\begin{Theorem}\label{ThmFreeChar} Let $\mathbb{G}$ be a compact quantum group acting continuously on a C$^*$-algebra $A$. Then the following conditions are equivalent:
\begin{thmenum}\item The action satisfies the Ellwood condition.
\item The action is saturated.
\end{thmenum}
\end{Theorem}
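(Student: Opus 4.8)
The plan is to introduce the dense spectral $*$-subalgebra and reduce both conditions to a single intrinsic statement about the crossed product. Let $\Irr(\mathbb{G})$ denote the set of (equivalence classes of) irreducible corepresentations of $\mathbb{G}$, let $A_\pi\subseteq A$ be the spectral subspace for $\pi$, so that $\mathcal{A}:=\mathrm{span}\bigcup_{\pi}A_\pi$ is a dense unital $*$-subalgebra on which $\alpha$ restricts to an algebraic coaction, let $B:=A_{\mathbf 1}=A^{\mathbb{G}}$ be the fixed point algebra, and let $E:=(\mathrm{id}\otimes h)\circ\alpha\colon A\to B$ be the canonical conditional expectation ($h$ the Haar state). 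Completing $A$ in the norm of $\langle a,b\rangle_B:=E(a^*b)$ yields a right Hilbert $B$-module $\mathcal{E}$; it is full because $A$ is unital, so $\mathcal{K}_B(\mathcal{E})$ is Morita equivalent to $B$. Since $\mathbb{G}$ is compact the action is automatically proper and integrable in Rieffel's sense, so there is a canonical surjective $*$-homomorphism $\Phi\colon A\rtimes_{\mathrm r}\mathbb{G}\to\mathcal{K}_B(\mathcal{E})$ onto this generalized fixed point algebra. I would prove the theorem by showing that each of conditions (1) and (2) is equivalent to ``$\Phi$ is an isomorphism''.

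For (2) this is essentially Rieffel's reformulation of saturation: saturation says that the $(A\rtimes_{\mathrm r}\mathbb{G})$-valued inner products on the imprimitivity-type bimodule $\mathcal{E}$ span a dense ideal of the crossed product, and that ideal is exactly the image of $\Phi$ seen inside $\mathcal{L}_B(\mathcal{E})$; hence saturation amounts to the surjection $\Phi$ being injective, i.e.\ an isomorphism. (One must check that the reduced crossed product is the correct object here even though $\hat{\mathbb{G}}$ need not be amenable; this is harmless since saturation concerns only the ideal structure near the canonical corner $p_{\mathbf 1}\in c_0(\hat{\mathbb{G}})$.) For (1) I would first use Peter--Weyl orthogonality to split the Ellwood density $\overline{(A\otimes1)\alpha(A)}=A\otimes C(\mathbb{G})$ into its homogeneous parts, obtaining the equivalent family of conditions $\overline{(A\otimes1)\alpha(A_\pi)}=A\otimes C(\mathbb{G})_\pi$, $\pi\in\Irr(\mathbb{G})$; rephrasing these through the orthogonality relations for the matrix coefficients of $\pi$ recovers the algebraic principality (Hopf--Galois) property of the comodule algebra $\mathcal{A}$ over $B$, as in the algebraic description recalled in the introduction.

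The bridge between the two sides --- and the technical heart of the proof --- is then the statement that $\mathcal{A}$ is principal over $B$ if and only if $\Phi$ is an isomorphism. Algebraically this is just effective descent along a faithfully flat Hopf--Galois extension: principality produces the inverse of the canonical Galois map, hence a strong Morita context turning $\mathcal{E}$ into an $(A\rtimes_{\mathrm r}\mathbb{G})$--$B$ equivalence bimodule; conversely an isomorphism $\Phi$ expresses $A\rtimes_{\mathrm r}\mathbb{G}$ as $\mathcal{K}_B(\mathcal{E})$, from which --- transporting through the multiplicative unitary, which intertwines $\alpha$ with the trivial coaction up to the regular representation --- one reads back the density defining the Ellwood condition. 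The main obstacle is the C$^*$-analytic upgrade of this purely algebraic picture: one has to control completions, approximate units, and the (generally non-unital) reduced crossed product, and in particular show that the a priori only densely defined algebraic Galois inverse extends to give genuine boundedness and fullness at the C$^*$-level. In the more hands-on incarnation, the crux is to prove that $p_{\mathbf 1}$ is a \emph{full} projection of $A\rtimes_{\mathrm r}\mathbb{G}$ exactly when $\alpha$ satisfies the Ellwood condition, which amounts to matching the rotation between spectral blocks effected by $\alpha(A)$ inside the crossed product with the Ellwood density inside $A\otimes C(\mathbb{G})$.
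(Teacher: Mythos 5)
Your overall strategy is close in spirit to the paper's: both reduce Ellwood's condition and saturation to statements about canonical isometries/morphisms relating $\mathcal{K}_B(\mathcal{A})$, the crossed product, and $A\otimes C(\mathbb{G})$, and both localize over $\Irr\mathbb{G}$ using Peter--Weyl. But there are two genuine gaps.

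First, your starting point --- ``there is a canonical surjective $^*$-homomorphism $\Phi\colon A\rtimes_{\mathrm r}\mathbb{G}\to\mathcal{K}_B(\mathcal{E})$'' --- is false in general. What exists canonically is the opposite inclusion: $\mathcal{K}_B(\mathcal{E})$ sits as a (possibly proper, possibly degenerate) closed two-sided ideal of $A\rtimes\mathbb{G}$, namely the ideal generated by the corner $p_{\mathbf 1}$. The crossed product does act on $\mathcal{E}$ by adjointable operators, and that representation restricts to the identity on the ideal, but its image need not be contained in $\mathcal{K}_B(\mathcal{E})$; the containment $\pi_{\mathrm{red}}({}_\pi(A\rtimes\mathbb{G}))\subseteq\mathcal{K}(\mathcal{A},\mathcal{A}_\pi)$ is exactly the adjointability of the localized Galois maps, which is equivalent to each $A_\pi$ being finitely generated projective over $B$ and is strictly weaker than freeness (and can fail outright, e.g.\ for the circle acting on the disc). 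So asserting $\Phi$ as a surjection onto the compacts already presupposes a nontrivial theorem, and your reformulation ``saturation $\Leftrightarrow$ $\Phi$ injective'' is then miscalibrated: saturation is the conjunction of ``image lands in the compacts'' and ``the ideal is full,'' not an injectivity statement about an a priori surjection.

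Second, the bridge you describe between the Ellwood density and the algebraic Hopf--Galois/faithfully-flat-descent picture is precisely the analytic heart of the problem, and you flag it as ``the main obstacle'' without resolving it. Ellwood's condition is a C$^*$-norm density statement; it gives surjectivity of the C$^*$-Galois isometry $G_\pi\colon\mathcal{A}_\pi\otimes_B A\to A\otimes\mathscr{L}^2(\mathbb{G})_\pi$, not bijectivity of the algebraic canonical map on $P(A)$, and no easy Peter--Weyl splitting converts one into the other. The paper's solution is to introduce three distinct Hilbert module structures on each isotypical component ($A_\pi$, $\mathcal{A}_\pi$, $\mathscr{A}_\pi$), prove a Pimsner--Popa type estimate $E_\pi(a)^*E_\pi(a)\le c_\pi^2\,E_B(a^*a)$ showing the spectral projections and the identity maps between these structures are completely bounded, and then show (Propositions 3.2 and 3.5) that the three Galois isometries are simultaneously adjointable with \emph{identical} range projections $G_\pi G_\pi^*=\mathcal{G}_\pi\mathcal{G}_\pi^*=\mathscr{G}_\pi\mathscr{G}_\pi^*$. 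Once that is in place, Ellwood $\Leftrightarrow$ $G$ unitary and saturation $\Leftrightarrow$ $\mathscr{G}$ unitary become the same condition. Without an argument of this type (or a substitute for it), your proposal does not close.
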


Our following result relates the freeness of a compact quantum group action to certain ring-theoretical properties of the associated isotypical components.

Let $\pi$ be a finite dimensional unitary representation of a compact quantum group $\mathbb{G}$, $A$ a unital C$^*$-algebra acted upon freely by $\mathbb{G}$, and $A_{\pi}$ be the
isotypical component of $A$ associated to $\pi$. In particular, the isotypical component for the trivial representation is the fixed point subalgebra $A^{\mathbb{G}}$. Each $A_{\pi}$
becomes an $A^{\mathbb{G}}$-bimodule by the algebra structure of $A$.

The spaces $A_\pi$ can be interpreted as sections of a direct sum of the vector bundle induced by the representation $\pi$. In the classical case of compact group actions on compact
Hausdorff spaces, they are known to be finitely generated projective over the algebra of the base space. In our C$^*$-algebraic setting, we obtain the same result from a combination
of a technique used in the proof of Theorem~\ref{ThmFreeChar} and Kasparov's stabilization theorem for Hilbert C$^*$-modules.

\begin{Theorem}\label{TheoProj} Let $A$ be a unital C$^*$-algebra endowed with a free action of $\mathbb{G}$. Then each isotypical component $A_{\pi}$ is finitely generated projective
as a right $A^\mathbb{G}$-module.\end{Theorem}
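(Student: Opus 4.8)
Throughout, write $B:=A^{\mathbb{G}}$ and let $E:=(\mathrm{id}\otimes h)\circ\alpha\colon A\to B$ be the canonical conditional expectation attached to the Haar state $h$. Since a finite-dimensional $\pi$ is a direct sum of finitely many irreducible corepresentations, $A_\pi$ is the corresponding finite direct sum of isotypical components of irreducibles, so it suffices to treat $\pi$ irreducible, with unitary corepresentation matrix $u=(u_{kl})_{k,l=1}^{n}$. My plan is to show that $A_\pi$, regarded as a right Hilbert $B$-module, is finitely generated projective --- equivalently, an orthogonally complemented submodule of a finite free Hilbert $B$-module. The Hilbert module structure is the obvious one: the right $B$-action is multiplication (which preserves $A_\pi$ since $\alpha(ab)=\alpha(a)(b\otimes1)$) and the $B$-valued inner product is $\langle a,b\rangle:=E(a^{*}b)$; completeness holds because, by the Peter--Weyl orthogonality relations, the norm $a\mapsto\|E(a^{*}a)\|^{1/2}$ is equivalent on $A_\pi$ to the C$^{*}$-norm (writing $\alpha(a)=\sum_{k,l}a_{kl}\otimes u_{kl}$ one has $a=\sum_{k}a_{kk}$ with each $a_{kl}\in A_\pi$, and both inequalities are then immediate), while $A_\pi$ is norm-closed in $A$.

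The heart of the argument, and what I expect to be the main obstacle, is to extract finite generation from freeness; this is where I would reuse the technique behind Theorem~\ref{ThmFreeChar}. The Ellwood condition $\overline{\alpha(A)(1\otimes C(\mathbb{G}))}=A\omin C(\mathbb{G})$ is a density statement, but after slicing the $C(\mathbb{G})$-leg onto the \emph{finite-dimensional} span of matrix coefficients of $\pi$ it can be upgraded to an exact finite relation: from the approximate version one produces finitely many elements of $A_\pi$ together with a nearby element $c\in B$, built from their $E$-pairings and satisfying $\|1-c\|<1$, and, because $B$ is unital and $E$ is a $B$-bimodule map with $B\cdot A_\pi\subseteq A_\pi$, one absorbs $c^{-1}$ to obtain exact relations. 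Carried out across the matrix blocks of $u$ --- equivalently, by producing a \emph{strong connection} for the Hopf-Galois extension of $B$ that freeness encodes and evaluating it on the matrix coefficients of $\pi$ --- this yields finitely many $p_{1},\dots,p_{N},q_{1},\dots,q_{N}\in A_\pi$ with $\sum_{i}\Theta_{p_{i},q_{i}}=\mathrm{id}_{A_\pi}$ in $\mathcal{L}_{B}(A_\pi)$, where $\Theta_{x,y}(z)=x\langle y,z\rangle$; in particular $A_\pi$ is finitely generated over $B$ and $\mathrm{id}_{A_\pi}$ is a finite-rank, hence compact, operator. The delicate point is exactly this passage from ``dense'' to ``finite'', for which the compactness of $\mathbb{G}$, through the Haar state and Peter--Weyl theory, is essential --- precisely as in the proof of Theorem~\ref{ThmFreeChar}.

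Finally, to package finite generation into finite projectivity cleanly I would invoke Kasparov's stabilization theorem: being finitely generated, $A_\pi$ is in particular countably generated, so $A_\pi\oplus\mathcal{H}_{B}\cong\mathcal{H}_{B}$ for the standard Hilbert $B$-module $\mathcal{H}_{B}$, which realises $A_\pi$ as an orthogonally complemented submodule of $\mathcal{H}_{B}$ with projection $P$ onto it. Transporting the identity $\mathrm{id}_{A_\pi}=\sum_{i}\Theta_{p_{i},q_{i}}$ through this embedding exhibits $P$ as a finite-rank operator on $\mathcal{H}_{B}$, so $A_\pi\cong P\mathcal{H}_{B}$ is algebraically finitely generated and therefore isomorphic to a complemented submodule $pB^{N}$ of a finite free module --- that is, finitely generated projective over $B$. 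Reassembling the finitely many irreducible blocks gives the statement for an arbitrary finite-dimensional $\pi$.
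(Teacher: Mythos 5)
Your strategy is sound and genuinely different from the paper's. The paper routes Theorem~\ref{TheoProj} through the machinery of Sections~\ref{adjt}--\ref{free}: freeness makes the Galois isometries $\mathcal{G}_\pi$ unitary (Proposition~\ref{PropFree}), hence adjointable; adjointability is then shown, via the identification of $\mathcal{A}_\pi\otimes_B\mathscr{A}$ with $\mathcal{K}(\mathcal{A},\mathcal{A}_\pi)$ inside the crossed product, to force $\pi_L(B)\subseteq\mathcal{K}(\mathcal{A}_\pi)$ (Theorem~\ref{ThmAdjProj}); unitality of $B$ then makes $\mathcal{K}(\mathcal{A}_\pi)$ unital, and Kasparov's stabilization theorem finishes. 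You instead go straight from the freeness density statement to a finite-rank operator close to $\mathrm{id}_{\mathcal{A}_\pi}$ and invert it by a Neumann series --- more elementary, with no crossed-product bookkeeping, though it does not deliver the equivalences of Theorem~\ref{ThmAdjProj} that the paper also needs for Theorem~\ref{ThmFreeChar}. Your reduction to irreducible $\pi$, the norm equivalence on $A_\pi$ (the paper's Lemma~\ref{LemBoun} and Corollary~\ref{CorPropHilb}), and the final stabilization step all match ingredients the paper uses.

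Two points in your middle step need repair. First, you have written the wrong condition: $\lbrack\alpha(A)(1\otimes C(\mathbb{G}))\rbrack=A\omin C(\mathbb{G})$ is the nondegeneracy axiom satisfied by \emph{every} action; the Ellwood condition is $\lbrack\alpha(A)(A\otimes 1)\rbrack=A\omin C(\mathbb{G})$, and it is precisely the $A\otimes 1$ factor that produces the $E$-pairings $E(q_i^*\,\cdot\,)$ your rank-one operators require. Second, the ``nearby element $c\in B$ with $\|1-c\|<1$'' is not the right object outside the ergodic case: what one actually obtains, after approximating $1\otimes\chi_\pi^*$ to within $\epsilon$ by $\sum_j\alpha(x_j)(y_j^*\otimes 1)$, slicing with $\iota\otimes\varphi$ against $\alpha(z)$, and compressing the $x_j,y_j$ into $A_\pi$ with the bounded idempotent $E_\pi$, is a finite-rank adjointable $T=\sum_j\Theta_{p_j,q_j}$ satisfying a C$^*$-norm estimate $\|(\mathrm{id}-T)z\|_A\le c_\pi\epsilon\|z\|_A$ on $A_\pi$; converting this into $\|\mathrm{id}-T\|_{\mathcal{L}(\mathcal{A}_\pi)}<1$ costs a further factor of the Pimsner--Popa constant from Lemma~\ref{LemBoun}, after which $\mathrm{id}=T^{-1}T$ lies in the ideal $\mathcal{K}(\mathcal{A}_\pi)$ and is itself finite rank. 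With these corrections the argument closes, and the concluding dual-basis/stabilization step is exactly as in the paper.
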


We note that, for actions of discrete group duals on general C$^*$-algebras, this was proven by W. Szyma\'{n}ski in unpublished work. But even for compact groups acting on general
unital C$^*$-algebras, our result seems to be new.

Note that the conclusion in Theorem~\ref{TheoProj} also holds for \emph{ergodic} actions of $\mathbb{G}$, i.e.~ actions for which $A^{\mathbb{G}} = \mathbb{C}$. See~\cite{Hoe1} for
the case of compact groups, and~\cite{Boc1} for the general case of compact quantum groups. Our proof of Theorem~\ref{TheoProj} was inspired by the short argument for this result
which appears in~\cite{Pop1}, as well as by~\cite{Was1} and the first sections of~\cite{Bic1}. Our arguments will cover both the free and ergodic cases at once. We stress that, since
we work in the C$^*$-algebraic setting, we need to refine some of the von Neumann algebraic techniques which appear in the above papers.

The conclusion of Theorem~\ref{TheoProj} is not valid for an arbitrary action: one can check for example that the isotypical components for the action of the circle group on the
closed unit disc are not finitely generated over the fixed point algebra, except for the fixed point algebra itself. The problem is essentially that the field of stabilizer groups is
not continuous.

Subsequent to Jones initial subfactor paper~\cite{Jon1}, a lot of effort has gone into constructing von Neumann algebraic subfactors, starting from more classical symmetries and
building up further to quantum symmetries, see
e.g.~\cite{Was2},\cite{Wen1},\cite{Saw1},\cite{Ban1},\cite{Ban2},\cite{Nik1}.
Motivated by this celebrated theory, Y.~Watatani~\cite{Wat1} introduced the
notion of finite index inclusion of \emph{C$^*$-algebras}. We will show that,
also in the C$^*$-algebraic setting, the Wassermann type inclusion associated
with finite dimensional unitary representations of quantum groups provides an
example of such an inclusion.  This generalizes the case of finite groups
in~\cite[section 2.8]{Wat1} and finite quantum groups in~\cite{Pel1}.  The key
is that the above structure theorem on $A_\pi$ gives a finite quasi-basis for
this inclusion.

\begin{Theorem}\label{ThmFinIndIncl}
Let $\mathbb{G}$ act freely on a unital C$^*$-algebra $A$, and let $\pi$ be a finite dimensional unitary representation of $\mathbb{G}$. Consider $A\otimes B(\mathscr{H}_{\pi})$ with
its induced action by $\mathbb{G}$. Then the inclusion \[A^{\mathbb{G}} \subseteq (A\otimes B(\mathscr{H}_{\pi}))^{\mathbb{G}}\] is a finite index inclusion of C$^*$-algebras. When
$\pi$ is irreducible, the index of the natural conditional expectation is equal to the square of the quantum dimension of $\mathscr{H}_{\pi}$. \end{Theorem}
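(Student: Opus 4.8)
The plan is to realise $C := (A\otimes B(\mathscr{H}_\pi))^{\mathbb{G}}$ as the algebra of adjointable operators on a finitely generated projective Hilbert module over $B:=A^{\mathbb{G}}$, embedded into $C$ via $a\mapsto a\otimes 1$, and then to read off a finite quasi-basis from a frame of that module. Set $\mathcal{E}:=(A\otimes\mathscr{H}_\pi)^{\mathbb{G}}$, a closed subspace of the right Hilbert $A$-module $A\otimes\mathscr{H}_\pi$. Unitarity of $\pi$ guarantees that the $A$-valued inner product of two $\mathbb{G}$-invariant vectors already lies in $B$, so $\mathcal{E}$ is a right Hilbert $B$-module; that it is finitely generated and projective over $B$ reduces, by a standard isotypical-decomposition argument, to the finite projectivity of the spectral subspaces $A_\sigma$, i.e.\ to Theorem~\ref{TheoProj}. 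Next, for $\xi,\eta\in\mathcal{E}$ one has $\xi\eta^{*}\in C$ and $\xi^{*}\eta\in B$ — here $\xi\eta^{*}\in A\otimes(\mathscr{H}_\pi\otimes\overline{\mathscr{H}_\pi})=A\otimes B(\mathscr{H}_\pi)$, while $\xi^{*}\eta\in A$ via the pairing $\overline{\mathscr{H}_\pi}\otimes\mathscr{H}_\pi\to\mathbb{C}$ — and left multiplication by an element of $C$ maps $\mathcal{E}$ into itself. Thus one obtains a unital $*$-homomorphism $\Lambda\colon C\to\mathcal{L}_{B}(\mathcal{E})=\mathcal{K}_{B}(\mathcal{E})$ with $\Lambda(\xi\eta^{*})=\theta_{\xi,\eta}$.

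The key step, and in my view the main obstacle, is to show that $\Lambda$ is an isomorphism. Surjectivity onto the rank-one operators, hence onto all of $\mathcal{K}_{B}(\mathcal{E})$, is immediate from $\Lambda(\xi\eta^{*})=\theta_{\xi,\eta}$; injectivity is equivalent to the density $\overline{\mathcal{E}\cdot A}=A\otimes\mathscr{H}_\pi$, i.e.\ to $\mathcal{E}$ generating $A\otimes\mathscr{H}_\pi$ as a right Hilbert $A$-module. This is exactly where freeness does real work: saturation (Theorem~\ref{ThmFreeChar}) forces this density, and the techniques behind Theorem~\ref{TheoProj} render it effective. For a non-free action it genuinely fails — already for $A=\mathbb{C}$ with the trivial $\mathbb{G}$-action and $\pi$ a non-trivial irreducible representation, one has $\mathcal{E}=0$ whereas $C=\mathbb{C}$.

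Granting the isomorphism $C\cong\mathcal{K}_{B}(\mathcal{E})$, I would fix a finite frame $(\xi_{k})_{k}$ of the right Hilbert $B$-module $\mathcal{E}$ (one exists by finite projectivity); since $\sum_{k}\theta_{\xi_{k},\xi_{k}}$ is the unit of $\mathcal{K}_{B}(\mathcal{E})$, the identity $\sum_{k}\xi_{k}\xi_{k}^{*}=1$ holds in $A\otimes B(\mathscr{H}_\pi)$. Let $Q_\pi$ be the canonical positive intertwiner between $\pi$ and its double contragredient, normalised by $\mathrm{Tr}(Q_\pi)=\mathrm{Tr}(Q_\pi^{-1})=\dim_{q}(\mathscr{H}_\pi)$, and let $\psi_\pi(T)=\mathrm{Tr}(Q_\pi T)/\mathrm{Tr}(Q_\pi)$ be the associated $\Ad\pi$-invariant state on $B(\mathscr{H}_\pi)$; its invariance makes $E:=(\mathrm{id}_A\otimes\psi_\pi)|_{C}$ the natural (faithful) conditional expectation of $C$ onto $B$. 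Twisting the inner product by $E$ equips $\mathcal{E}$ with a second Hilbert $B$-module structure, a left one this time, with inner product $(\xi,\eta)\mapsto E(\eta\xi^{*})$; this is again finitely generated and projective (Theorem~\ref{TheoProj} for $\bar\pi$, transported through the $*$-operation), and I would fix a finite frame $(\eta_{l})_{l}$ for it. A direct computation — unwinding $w_{k,l}^{*}x=\eta_{l}(\xi_{k}^{*}x)$, applying $E$, and then using the two reconstruction formulas together with $\sum_{k}\xi_{k}\xi_{k}^{*}=1$ — shows that the finite family $w_{k,l}:=\xi_{k}\eta_{l}^{*}\in C$ satisfies $\sum_{k,l}w_{k,l}\,E(w_{k,l}^{*}x)=x$ for every $x\in C$, and symmetrically $\sum_{k,l}E(x\,w_{k,l})\,w_{k,l}^{*}=x$. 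Hence $(w_{k,l})$ is a quasi-basis for $E$, so $B\subseteq C$ has finite index in Watatani's sense.

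Finally, when $\pi$ is irreducible the Watatani index is the positive central element $\sum_{k,l}w_{k,l}w_{k,l}^{*}=\sum_{k}\xi_{k}\bigl(\sum_{l}\eta_{l}^{*}\eta_{l}\bigr)\xi_{k}^{*}$, and I would compute it to equal $\dim_{q}(\mathscr{H}_\pi)^{2}\cdot 1$ using Woronowicz's orthogonality relations for the matrix coefficients of $\pi$ together with $\mathrm{Tr}(Q_\pi)\,\mathrm{Tr}(Q_\pi^{-1})=\dim_{q}(\mathscr{H}_\pi)^{2}$. The mechanism is already transparent in the ergodic case $B=\mathbb{C}$: there $C\cong B(\mathscr{H}_\pi)$, $E$ is the state $\psi_\pi$, the frame $(\eta_{l})$ may be taken orthonormal for the twisted inner product, one gets $\sum_{l}\eta_{l}^{*}\eta_{l}=\mathrm{Tr}\bigl((Q_\pi/\mathrm{Tr}(Q_\pi))^{-1}\bigr)=\dim_{q}(\mathscr{H}_\pi)^{2}$, and the general case is this same computation carried out inside the module $\mathcal{E}$.
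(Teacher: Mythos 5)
Your proposal is correct and follows essentially the same route as the paper: the isomorphism $(A\otimes B(\mathscr{H}_\pi))^{\mathbb{G}}\cong\mathcal{K}_B(A\smbox\mathscr{H}_\pi)$ that you rightly single out as the main obstacle is exactly the paper's Lemma~\ref{LemFait} (proved there by an explicit computation with strong left invariance rather than by citing saturation abstractly), the quasi-basis built from a right frame and a left frame is the paper's Lemma~\ref{LemIn}, and the index is evaluated there by the same Woronowicz-orthogonality computation you sketch, using concretely chosen frames for which $\sum_l\langle\eta_l,\eta_l\rangle_B$ is literally the scalar $\dim_q(\mathscr{H}_\pi)^2$ (for a general frame this element of $B$ need not be scalar, so the explicit choice matters). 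The one economization in the paper worth noting: for the finite-index-type statement alone it bypasses the isomorphism with $\mathcal{K}_B(\mathcal{E})$ entirely, observing that $C=A\smbox B(\mathscr{H}_\pi)$ is itself of the form $A\smbox\mathscr{H}_\rho$ for $\rho=\mathrm{Ad}_\pi$ (viewing $B(\mathscr{H}_\pi)$ as a Hilbert space via the invariant state), so Theorem~\ref{TheoProj} plus Watatani's Hilbert-module criterion (Lemma~\ref{LemHilb}) applies directly; the isomorphism is needed only for the index computation.
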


The paper is organized as follows. We gather basic facts about compact quantum groups and their representations in Section~\ref{SecAc}, where we also prove a crucial Pimsner-Popa type
estimate on the complete boundedness of the projections onto spectral subspaces. The most technical part of this paper occupies Section~\ref{adjt}, where we study the adjointability
of Galois maps in terms of various Hilbert C$^*$-bimodule structures of the isotypical components. The first two of our main theorems are proved in Section~\ref{free}, based on the
results of Section~\ref{adjt}.  Finally, we study the C$^*$-algebraic index for the Wassermann type inclusion associated with a free action and an irreducible representation in
Section~\ref{indx}.

\emph{General notations}

We denote the identity maps of various objects by $\iota$ once and for all. If $X$ is a Banach space and $E\subseteq X$ is a subset, we denote by $\lbrack E\rbrack$ the closed linear
span of $E$ inside $X$. Following the convention of right Hilbert C$^*$-modules, the scalar product of Hilbert spaces is taken to be conjugate linear in the first argument. The
complex conjugate of a Hilbert space $\mathscr{H}$ will be identified with the dual of $\mathscr{H}$ by means of the inner product and denoted by $\mathscr{H}^*$.  The multiplier
C$^*$-algebra of a C$^*$-algebra $C$ is denoted by $\mathcal{M}(C)$.  When $\mathcal{E}$ is a right Hilbert C$^*$-module over a C$^*$-algebra $A$, the algebra of (adjointable)
$A$-endomorphisms is denoted by $\mathcal{L}(\mathcal{E})_A$ and that of the compact $A$-endomorphisms is by $\mathcal{K}(\mathcal{E})_A$.  If there is no fear of confusion we also
write $\mathcal{L}(\mathcal{E})$ and $\mathcal{K}(\mathcal{E})$.

\section{Isotypical components of quantum group actions and associated Hilbert modules}\label{SecAc}

\subsection{Compact quantum groups}\label{SubSecCQG}

In this section, we review the theory of compact quantum groups.  A compact quantum group $\mathbb{G}$ is represented by a unital C$^*$-algebra $C(\mathbb{G})$, together with a unital
$^*$-homomorphism \[\Delta\colon C(\mathbb{G})\rightarrow C(\mathbb{G})\omin C(\mathbb{G})\] satisfying the coassociativity and the cancellation properties~\cite{Wor1,Mae1}. We will
denote by $P(\mathbb{G})\subseteq C(\mathbb{G})$ the Hopf algebra of matrix coefficients associated with $\mathbb{G}$, by $S$ its antipode, and by $\varphi\colon
C(\mathbb{G})\rightarrow \mathbb{C}$ the invariant Haar state on $C(\mathbb{G})$.

Let $\pi$ be a finite dimensional unitary representation of $\mathbb{G}$, by which we mean a finite dimensional Hilbert space $\mathscr{H}_{\pi}$ together with a left
$C(\mathbb{G})$-comodule structure \[\delta_{\pi}\colon \mathscr{H}_{\pi}\rightarrow C(\mathbb{G})\otimes \mathscr{H}_{\pi}\] satisfying $1\otimes v^*w = \delta_{\pi}(v)^*
\delta_{\pi}(w)$ for all $v,w\in \mathscr{H}_{\pi}$, having interpreted $\mathscr{H}_{\pi}$ as linear operators between the Hilbert spaces $\mathbb{C}$ and $\mathscr{H}_{\pi}$.
Choosing an orthogonal basis $\{e_i\}$ of $\mathscr{H}_{\pi}$, and writing $\delta_{\pi}(e_i) = \sum_j u_{ij}\otimes e_j$, this means that $\sum_{k} u_{ik}^* u_{jk} = \delta_{ij}$.
Consequently $\sum_k u_{ki}u_{kj}^* = \delta_{ij}$, as $u$ is invertible. We let $\Irr \mathbb{G}$ denote a complete representative system of irreducible finite dimensional unitary
representations of $\mathbb{G}$ up to unitary equivalence.

If $\pi$ is a unitary representation of $\mathbb{G}$, we denote by $\overline{\pi}$ the associated contragredient representation.  It is implemented on $\mathscr{H}_\pi^*$ with the
dual comodule structure, but we equip it with a new Hilbert space structure averaged out by means of $\varphi$. More precisely, choosing an orthonormal basis $e_i$ of
$\mathscr{H}_{\pi}$ with $\delta_{\pi}(e_i) =\sum_j u_{ij}\otimes e_j$, and writing $e_i^* = \langle e_i,\,\cdot\,\rangle$, we define the new scalar product on $\mathscr{H}_\pi^*$ by
\[
\left\llangle e_i^*,e_j^* \right\rrangle = \sum_{k,l} \varphi(u_{ik}u_{jl}^*) \langle e_k^*,e_l^*\rangle = \sum_k \varphi(u_{ik}u_{jk}^*).
\]

By Woronowicz's theory~\cite{Wor1}, we obtain the invertible positive matrix $Q_{\pi}\in B(\mathscr{H}_{\pi})$ for each $\pi \in \Irr \mathbb{G}$ satisfying $\textrm{Tr}(Q_{\pi}) =
\textrm{Tr}(Q_{\pi}^{-1})$ and
\begin{equation}\label{EqWorCharProperty}
\varphi(u_{ij}^*u_{kl}) = \delta_{ik} \frac{\langle e_l,Q_{\pi}e_j\rangle}{\textrm{Tr}(Q_{\pi})},\quad \varphi(u_{ij}u_{kl}^*) = \delta_{jl}\frac{\langle
e_k,Q_{\pi}^{-1}e_i\rangle}{\textrm{Tr}(Q_{\pi}^{-1})}, \qquad \textrm{for all }i,j,k,l.
\end{equation}
The number $\textrm{Tr}(Q_{\pi})$ is known as the \emph{quantum dimension} of $\mathscr{H}_{\pi}$,\[\dim_q(\mathscr{H}_{\pi}) = \textrm{Tr}(Q_{\pi}).\] One can then define the quantum
dimension of any representation of $\mathbb{G}$ by linearity. See e.g.~\cite{Rob1} for a detailed exposition.

\subsection{Actions of compact quantum groups}

An action of $\mathbb{G}$ on a (possibly non-unital) C$^*$-algebra $A$ is given by a non-degenerate injective $^*$-homomorphism $\alpha\colon A\rightarrow A\omin C(\mathbb{G})$,
satisfying the coaction property $( \alpha\otimes\iota) \circ \alpha = (\iota\otimes\Delta ) \circ \alpha$ and the density condition \[\lbrack \alpha(A)(1\otimes C(\mathbb{G}))\rbrack
= A\omin C(\mathbb{G}).\] We denote by $B = A^{\mathbb{G}}$ the C$^*$-algebra of $\mathbb{G}$-invariant elements, i.e.~ elements $x \in A$ satisfying $\alpha(x) = x  \otimes 1$. One
then has the following map $E_B$ from $A$ to $B$: \[E_B(x) = (\iota \otimes \varphi)\alpha(x), \qquad x\in A.\] In case $A$ is unital, this is a conditional expectation. In general,
$E_B$ is a c.c.p.~ $B$-bimodule map. 

When $\pi$ is a finite dimensional unitary representation of $\mathbb{G}$, we can  consider the vector space of `equivariant functions' \[ A \smbox \mathscr{H}_{\pi} = \{ z \in A
\otimes \mathscr{H}_{\pi} \mid (\alpha\otimes \iota) z = (\iota\otimes \delta_{\pi}) z \}.\] It has a natural $B$-bimodule structure, as well as a right $B$-valued Hermitian inner
product which is characterized by the following identity in $B\cong B\otimes \mathbb{C}$: \[ \langle w,z \rangle_B = w^*z,\qquad w,z \in A \smbox \mathscr{H}_{\pi}. \] We further put
\[A_{\pi}= \textrm{ linear span of }\{ (\iota \otimes \omega)z \mid z\in A \smbox \mathscr{H}_{\pi},\, \omega\in \mathscr{H}_{\pi}^*\} \subseteq A,\] which we call the
$\pi$-isotypical component of $A$. These $A_{\pi}$ are naturally $B$-bimodules with the right $B$-Hermitian inner product defined by \[ \langle x,y\rangle_B = E_B(x^*y).\] They carry
an (algebraic) right $P(\mathbb{G})$-comodule structure. Note that for the trivial representation $\pi = \mathrm{triv}$, we have $A_{\mathrm{triv}} = B$. The involution on $A$ and the
conjugate operation on the unitary representations of $\mathbb{G}$ are related by \[A_{\overline{\pi}} = \{x^* \mid x\in A_{\pi}\}.\]
We denote by $P(A)$ the $^*$-algebra $\sum_{\pi \in \Irr \mathbb{G}}^\oplus A_{\pi}$.

If $\pi_1$ and $\pi_2$ are two finite dimensional unitary representations of $\mathbb{G}$, we denote their tensor product representation on $\mathscr{H}_{\pi_1}\otimes
\mathscr{H}_{\pi_2}$ by $\pi_1\times \pi_2$. One then has the inclusion $A_{\pi_1}\cdot A_{\pi_2} \subseteq A_{\pi_1\times \pi_2}$.

We omit the proof of the following lemma, which follows from a straightforward calculation. In the statement of the lemma, we endow $\mathscr{H}^*$ with its modified Hilbert space
structure.

\begin{Lem}\label{LemMult} Let $\pi$ be an irreducible representation of $\mathbb{G}$. Then the map\[\phi_{\pi}\colon (A \smbox \mathscr{H}_{\pi})\otimes \mathscr{H}_{\pi}^*
\rightarrow A_{\pi}, z\otimes \omega \rightarrow \sqrt{n}(\iota\otimes \omega Q_{\pi}^{-1})z\] is an isomorphism of $B$-bimodules which is compatible with the $B$-valued inner
product.
\end{Lem}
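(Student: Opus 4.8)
The plan is to verify, in turn, that $\phi_\pi$ is a morphism of $B$-bimodules, that it is a bijection onto $A_\pi$, and that it intertwines the $B$-valued inner products. It is convenient to work in coordinates with respect to a fixed orthonormal basis $\{e_i\}$ of $\mathscr{H}_\pi$ with $\delta_\pi(e_i)=\sum_j u_{ij}\otimes e_j$, and to write $n=\dim\mathscr{H}_\pi$. Two facts will be used throughout: first, that $z=\sum_i a_i\otimes e_i$ belongs to $A\smbox\mathscr{H}_\pi$ exactly when $\alpha(a_j)=\sum_i a_i\otimes u_{ij}$ for all $j$; and second, the Woronowicz orthogonality relations~\eqref{EqWorCharProperty}, which are available precisely because $\pi$ is irreducible.

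The bimodule property and surjectivity are quick. The formula defining $\phi_\pi$ is bilinear in $(z,\omega)$, so $\phi_\pi$ is a well-defined linear map on the algebraic tensor product; since $B$ acts on $A\smbox\mathscr{H}_\pi$, and hence on $(A\smbox\mathscr{H}_\pi)\otimes\mathscr{H}_\pi^*$, by multiplication in the $A$-leg, while $\iota\otimes\omega Q_\pi^{-1}$ is an $A$-bimodule map, one reads off $\phi_\pi(bz\otimes\omega)=b\,\phi_\pi(z\otimes\omega)$ and $\phi_\pi(zb\otimes\omega)=\phi_\pi(z\otimes\omega)\,b$ for $b\in B$. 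For surjectivity, note that $\phi_\pi(z\otimes\omega)=(\iota\otimes\sqrt n\,\omega Q_\pi^{-1})z$ lies in $A_\pi$, and conversely the spanning element $(\iota\otimes\omega)z$ of $A_\pi$ equals $n^{-1/2}\phi_\pi(z\otimes\omega Q_\pi)$. I would not prove injectivity directly: once compatibility with the inner products is known, $\phi_\pi(\xi)=0$ forces $\langle\xi,\xi\rangle_B=\langle\phi_\pi\xi,\phi_\pi\xi\rangle_B=0$, and the inner product on $(A\smbox\mathscr{H}_\pi)\otimes\mathscr{H}_\pi^*$ is definite (being the tensor product of the definite $B$-valued form $w,z\mapsto w^*z$ on $A\smbox\mathscr{H}_\pi$ with a genuine Hilbert-space inner product), so $\xi=0$.

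The substantive step is compatibility with the $B$-valued inner products. On the domain the form is $\langle z_1\otimes\omega_1,z_2\otimes\omega_2\rangle_B=\llangle\omega_1,\omega_2\rrangle\,z_1^*z_2$, with $\mathscr{H}_\pi^*$ carrying its modified structure, for which~\eqref{EqWorCharProperty} gives $\llangle e_p^*,e_q^*\rrangle=\frac{n}{\textrm{Tr}(Q_\pi^{-1})}\langle e_q,Q_\pi^{-1}e_p\rangle$; on the target it is $\langle x,y\rangle_B=E_B(x^*y)$. For $z_1=\sum_k a_k\otimes e_k$ and $z_2=\sum_l b_l\otimes e_l$ in $A\smbox\mathscr{H}_\pi$, the equivariance condition gives $\alpha(a_i)=\sum_k a_k\otimes u_{ki}$, and~\eqref{EqWorCharProperty} yields
\[
E_B(a_i^*b_j)=(\iota\otimes\varphi)\bigl(\alpha(a_i)^*\alpha(b_j)\bigr)=\sum_{k,l}a_k^*b_l\,\varphi(u_{ki}^*u_{lj})=\frac{\langle e_j,Q_\pi e_i\rangle}{\textrm{Tr}(Q_\pi)}\,(z_1^*z_2).
\]
Since $\phi_\pi(z_1\otimes e_p^*)=\sqrt n\sum_i\langle e_p,Q_\pi^{-1}e_i\rangle\,a_i$ and likewise for $z_2\otimes e_q^*$, inserting these into $E_B\bigl((\,\cdot\,)^*(\,\cdot\,)\bigr)$ and collapsing the inner sum by $\sum_j\langle e_q,Q_\pi^{-1}e_j\rangle\langle e_j,Q_\pi e_i\rangle=\langle e_q,e_i\rangle=\delta_{qi}$ produces
\[
\langle\phi_\pi(z_1\otimes e_p^*),\phi_\pi(z_2\otimes e_q^*)\rangle_B=\frac{n}{\textrm{Tr}(Q_\pi)}\langle e_q,Q_\pi^{-1}e_p\rangle\,(z_1^*z_2),
\]
which equals $\langle z_1\otimes e_p^*,z_2\otimes e_q^*\rangle_B$ because $\textrm{Tr}(Q_\pi)=\textrm{Tr}(Q_\pi^{-1})$; sesquilinearity then extends the identity to all of $(A\smbox\mathscr{H}_\pi)\otimes\mathscr{H}_\pi^*$. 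The only real friction I anticipate is in this last computation, where one must keep the $Q_\pi$ versus $Q_\pi^{-1}$ factors, the normalization of the modified inner product on $\mathscr{H}_\pi^*$, and the use of $\textrm{Tr}(Q_\pi)=\textrm{Tr}(Q_\pi^{-1})$ straight; the argument is otherwise routine bookkeeping.
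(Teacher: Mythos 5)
Your proof is correct: the paper omits the proof of this lemma, describing it as a straightforward calculation, and your direct verification via the Woronowicz orthogonality relations \eqref{EqWorCharProperty} together with $\mathrm{Tr}(Q_\pi)=\mathrm{Tr}(Q_\pi^{-1})$ is exactly the intended computation. All the normalizations (the factor $\sqrt{n}$, the modified inner product on $\mathscr{H}_\pi^*$, and the $Q_\pi^{-1}$ in the definition of $\phi_\pi$) check out, and deducing injectivity from compatibility with the definite inner products is a clean way to finish.
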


\subsection{Hilbert module structures}

The above right $B$-modules $A_{\pi}$ and $A\smbox \mathscr{H}_{\pi}$ are complete with respect to their $B$-valued inner products. To see this, we first introduce the following
special elements.

\begin{Def} Let $\pi$ be an irreducible representation of $\mathbb{G}$. We call \emph{the quantum character} of $\pi$ the unique element $\chi_{\pi} \in P(\mathbb{G})$ satisfying
\[
(\varphi(\chi_{\pi}\,\cdot\,)\otimes \iota)\delta_{\rho} =
\begin{cases}
0 & \text{if $\rho \in \Irr \mathbb{G}$ and $\rho\ncong \pi$}, \\
\iota_{\mathscr{H}_{\rho}} &\text{if $\rho \cong \pi$}.
\end{cases}
\]
We will also use the shorthand notation \[\omega_{\pi}=\hat{\chi}_{\pi}=\varphi(\chi_{\pi}\,\cdot\,)\in C(\mathbb{G})^*.\] For an arbitrary representation $\pi$, we write $\chi_{\pi}$
for the sum of the quantum characters of those irreducible representations which appear in $\pi$ with non-zero multiplicity.
\end{Def}

We note that $\chi_{\pi}$ is well-defined by the Peter-Weyl theory for compact quantum groups. We record the following facts about $\chi_{\pi}$.

\begin{Lem}\label{LemChar} For each representation $\pi$ of $\mathbb{G}$, we have that \[ S^2(\chi_{\pi})=\chi_{\pi}, \;S(\chi_{\pi})^* = \chi_{\pi}, \textrm{ and }\chi_{\pi}^* \in
C(\mathbb{G})_{\pi}.\]\end{Lem}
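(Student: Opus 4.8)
The strategy is to reduce to the irreducible case, extract an explicit formula for $\chi_\pi$ from the orthogonality relations \eqref{EqWorCharProperty}, and then establish the three identities by direct computation in the Hopf $*$-algebra $P(\mathbb{G})$. First I would reduce to $\pi\in\Irr\mathbb{G}$: by definition $\chi_\pi=\sum_\rho\chi_\rho$ over the distinct irreducible constituents $\rho$ of $\pi$, and since $S$ and $S^2$ are (anti-)homomorphisms, the involution is conjugate-linear, and $C(\mathbb{G})_\pi=\sum_\rho C(\mathbb{G})_\rho$, each of the three assertions for $\pi$ follows termwise. So let $\pi$ be irreducible and fix an orthonormal basis $\{e_i\}$ of $\mathscr{H}_\pi$ diagonalising $Q:=Q_\pi$, with $\delta_\pi(e_i)=\sum_j u_{ij}\otimes e_j$. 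Since $\varphi(\chi_\pi\,\cdot\,)$ vanishes on $C(\mathbb{G})_\rho$ for $\rho\ncong\pi$, the orthogonality relations force $\chi_\pi$ to be a linear combination of the $u_{ij}^*$; imposing the characterising condition $\varphi(\chi_\pi u_{kl})=\delta_{kl}$ and solving the resulting linear system using the first relation in \eqref{EqWorCharProperty} gives
\[
\chi_\pi=\textrm{Tr}(Q)\sum_{i}\langle e_i,Q^{-1}e_i\rangle\, u_{ii}^{*}.
\]
In particular $\chi_\pi$ lies in the linear span of the matrix coefficients of $\overline\pi$, so $\chi_\pi^{*}$ lies in the span of the matrix coefficients of $\pi$, which is $C(\mathbb{G})_\pi$; this is the third assertion.

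For $S^2(\chi_\pi)=\chi_\pi$ and $S(\chi_\pi)^{*}=\chi_\pi$ I would substitute this formula into the standard facts $S(u_{ij})=u_{ji}^{*}$, $S(x^{*})=S^{-1}(x)^{*}$, and Woronowicz's theorem that $S^2$ acts on the matrix coefficients of $\pi$ by conjugation with $Q$. In the $Q$-eigenbasis this is transparent: one has $S^{-2}(u_{ii})=u_{ii}$ and $S^{-1}(u_{ii})=u_{ii}^{*}$, hence $S^{2}(u_{ii}^{*})=u_{ii}^{*}$ and $S(u_{ii}^{*})=u_{ii}$, so that $S^2(\chi_\pi)=\chi_\pi$ and $S(\chi_\pi)=\textrm{Tr}(Q)\sum_i\langle e_i,Q^{-1}e_i\rangle u_{ii}$, whose adjoint is again $\chi_\pi$ (the diagonal entries $\langle e_i,Q^{-1}e_i\rangle$ being real). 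Alternatively one can avoid the explicit formula: the functional $\omega_\pi\circ S^{-2}$ satisfies the same characterising property as $\omega_\pi$, because conjugating $\delta_\rho$ by $Q$ does not change its isotype, so $\omega_\pi\circ S^{-2}=\omega_\pi$; combined with $S^2$-invariance of $\varphi$ this yields $\varphi(S^{2}(\chi_\pi)x)=\varphi(\chi_\pi x)$ for all $x\in P(\mathbb{G})$, whence $S^2(\chi_\pi)=\chi_\pi$ by faithfulness of $\varphi$ on $P(\mathbb{G})$, and then $S(\chi_\pi)^{*}=\chi_\pi$ reduces to $\chi_\pi^{*}=S(\chi_\pi)$, checked the same way.

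Everything here is routine linear algebra over $P(\mathbb{G})$; the only point needing care---the main obstacle, such as it is---is keeping the index and complex-conjugate bookkeeping straight and making sure the normalisation of $Q$ in \eqref{EqWorCharProperty} is the one occurring in Woronowicz's $S^2$-formula. Choosing a $Q$-eigenbasis from the outset, as above, keeps this entirely manageable.
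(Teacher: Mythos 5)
Your proof is correct. The paper itself gives no proof of this lemma, recording it as a standard consequence of Woronowicz's Peter--Weyl theory, and your argument is exactly the expected way to fill in the details: the orthogonality relations \eqref{EqWorCharProperty} force $\chi_\pi=\mathrm{Tr}(Q_\pi)\sum_i\langle e_i,Q_\pi^{-1}e_i\rangle u_{ii}^*$ in a $Q_\pi$-eigenbasis, from which all three identities follow by the standard formulas $S(u_{ij})=u_{ji}^*$, $S(x^*)=S^{-1}(x)^*$ and the fact that $S^{\pm2}$ fixes the diagonal coefficients $u_{ii}$ in that basis; the reduction to irreducible $\pi$ and the characterisation-based alternative for $S^2(\chi_\pi)=\chi_\pi$ are both sound as well.
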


Consider now the bounded map \[E_{\pi}\colon A\rightarrow A\colon a\rightarrow (\iota\otimes \omega_{\pi})\alpha(a).\] The following lemma is a reformulation of (part
of)~\cite[Theorem 1.5]{Pod1} (which holds regardless of any unitality assumption on $A$).

\begin{Lem} The map $E_{\pi}$ is an idempotent onto $A_{\pi}$.\end{Lem}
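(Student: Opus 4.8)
The plan is to prove the statement first on the dense $^*$-subalgebra $P(A)=\bigoplus_{\sigma\in\Irr\mathbb{G}}A_{\sigma}$, where it is a purely algebraic assertion about comodules — essentially \cite[Theorem 1.5]{Pod1} — and then to transport it to the $C^*$-level by continuity. For the latter step one needs only two soft facts: that $E_{\pi}$ is norm bounded, and that the linear span $A_{\pi}$ is in fact norm closed in $A$. Boundedness is immediate, since the slice map $\iota\otimes\omega_{\pi}$ on $A\omin C(\mathbb{G})$ has norm at most $\|\omega_{\pi}\|\le\|\chi_{\pi}\|$ while $\alpha$ is isometric, so that $\|E_{\pi}\|\le\|\chi_{\pi}\|$.

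For the algebraic core, fix $\sigma\in\Irr\mathbb{G}$; since $A_{\sigma}$ depends only on the unitary equivalence class of $\sigma$, it suffices to evaluate $E_{\pi}$ on an element of the form $x=(\iota\otimes\omega)z$ with $z\in A\smbox\mathscr{H}_{\sigma}$ and $\omega\in\mathscr{H}_{\sigma}^{*}$, such elements spanning $A_{\sigma}$. Writing $z=\sum_{k}z_{k}\otimes e_{k}$ and $\delta_{\sigma}(e_{k})=\sum_{l}u^{\sigma}_{kl}\otimes e_{l}$, the equivariance $(\alpha\otimes\iota)z=(\iota\otimes\delta_{\sigma})z$ yields $\alpha(x)=\sum_{k,l}\omega(e_{l})\,z_{k}\otimes u^{\sigma}_{kl}$, whence $E_{\pi}(x)=\sum_{k,l}\omega(e_{l})\,\varphi(\chi_{\pi}u^{\sigma}_{kl})\,z_{k}$. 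Decomposing $\chi_{\pi}$ as the sum of the $\chi_{\rho}$ over the irreducibles $\rho$ occurring in $\pi$ and using the defining property of each $\chi_{\rho}$ (which reads $\sum_{l}\varphi(\chi_{\rho}u^{\sigma}_{kl})e_{l}=\delta_{\sigma\rho}e_{k}$), one obtains $\varphi(\chi_{\pi}u^{\sigma}_{kl})=\delta_{kl}$ when $\sigma$ occurs in $\pi$ and $\varphi(\chi_{\pi}u^{\sigma}_{kl})=0$ otherwise. Hence $E_{\pi}(x)=x$ in the first case and $E_{\pi}(x)=0$ in the second. Thus, on $P(A)=\bigoplus_{\sigma}A_{\sigma}$, the map $E_{\pi}$ is the linear projection which is the identity on those $A_{\sigma}$ with $\sigma$ in $\pi$ and zero on the remaining summands; in particular its range is the algebraic span $A_{\pi}=\sum_{\sigma\text{ in }\pi}A_{\sigma}$, and likewise each individual $E_{\sigma}$ is a bounded linear projection with range $A_{\sigma}$.

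To conclude, observe that $E_{\pi}^{2}$ and $E_{\pi}$ are bounded and agree on the dense subspace $P(A)$, hence on all of $A$; so $E_{\pi}$ is a bounded idempotent, its range is closed, and $A_{\pi}\subseteq E_{\pi}(A)\subseteq\overline{A_{\pi}}$. It remains to see $A_{\pi}=\overline{A_{\pi}}$. For $\sigma\in\Irr\mathbb{G}$ the space $A\smbox\mathscr{H}_{\sigma}$ is complete: it is the kernel of the two continuous maps $\alpha\otimes\iota$ and $\iota\otimes\delta_{\sigma}$, and its $B$-valued norm coincides with the $C^*$-norm since $\langle z,z\rangle_{B}=z^{*}z$. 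The isomorphism $\phi_{\sigma}$ of Lemma~\ref{LemMult} is compatible with the $B$-valued inner products and is manifestly $C^*$-norm bounded; together with the inequality $\|y\|_{B}\le\|y\|_{A}$ on $A_{\sigma}$ (valid because $E_{B}$ is contractive) this forces $\phi_{\sigma}$ to be a topological isomorphism onto $A_{\sigma}$, so that each $A_{\sigma}$ is closed in $A$. As $\pi$ is finite dimensional, only finitely many $A_{\sigma}$ enter $A_{\pi}$, and they sit in topological direct sum inside $A$ — each complemented in $A_{\pi}$ by the bounded projection $E_{\sigma}$ obtained above — so $A_{\pi}$ is closed. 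Therefore $E_{\pi}(A)=A_{\pi}$.

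The algebraic heart of the lemma is thus a one-line unwinding of the definition of $\chi_{\pi}$ (and may alternatively be quoted from \cite[Theorem 1.5]{Pod1}); the only point that is not purely formal is the closedness of $A_{\pi}$ in $A$, i.e.\ the implicit comparison of the $C^*$-norm with the $B$-valued module norm on the isotypical components, which is handled as indicated via Lemma~\ref{LemMult}.
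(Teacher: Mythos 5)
Your proof is correct, but it is worth noting that the paper offers no argument at all for this lemma: it simply records it as a reformulation of part of \cite[Theorem 1.5]{Pod1}. What you have written is therefore a genuine, essentially self-contained proof rather than a variant of the paper's. Your algebraic core (unwinding $\varphi(\chi_{\pi}u^{\sigma}_{kl})=\delta_{kl}$ or $0$ from the defining property of the quantum character) is exactly the content being outsourced to Podle\'s, and the continuity step is handled cleanly. Two structural points deserve comment. First, the only genuinely delicate issue is the one you isolate, namely that $A_{\pi}$ (an algebraic span by definition) is norm closed; you derive this from the completeness of $A\smbox\mathscr{H}_{\sigma}$ together with Lemma~\ref{LemMult} and the inequality $\|y\|_{B}\le\|y\|_{A}$. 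This is a legitimate and non-circular route, and it is interestingly opposite to the paper's own bookkeeping: in Corollary~\ref{CorPropHilb} the paper deduces closedness of $A_{\pi}$ \emph{from} the present lemma (``being the image of a norm-bounded projection'') and gets the norm comparison from the Pimsner--Popa estimate of Lemma~\ref{LemBoun}, whereas you obtain the reverse comparison from the boundedness of $\phi_{\sigma}$ and never invoke Lemma~\ref{LemBoun}. Second, your argument is not entirely independent of \cite[Theorem 1.5]{Pod1} either, since you quote the norm density of $P(A)$ in $A$ (which is the other half of that theorem) to pass from $P(A)$ to $A$; this is harmless and is also how the paper uses Podle\'s elsewhere, but it should be acknowledged explicitly. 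With that caveat, the proposal is sound and arguably more informative than the citation it replaces.
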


The following Pimsner-Popa type inequality will be crucial in what follows.

\begin{Lem}\label{LemBoun} For each finite dimensional unitary representation $\pi$ of $\mathbb{G}$, there exists $c_{\pi}>0$ such that for all $n\in \mathbb{N}_0$ and $a\in
M_n(\mathbb{C})\otimes A$, we have \[(\iota \otimes E_{\pi})(a)^*(\iota\otimes E_{\pi})(a) \leq c_{\pi}^2 (\iota\otimes E_B)(a^*a).\]\end{Lem}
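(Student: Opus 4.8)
The plan is to circumvent the fact that $E_\pi$ is \emph{not} positive --- its range $A_\pi$ is not $*$-closed in general, so Kadison--Schwarz does not apply --- by factoring $E_\pi$ through the conditional expectation $E_B$ by means of a single adjointable operator on a Hilbert $A$-module, and then reading off the inequality from the positivity of $T^*T$ for that operator.

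First I would fix the GNS construction $(L^2(\mathbb{G}),\Lambda_\varphi)$ of the Haar state, normalised so that $\langle\Lambda_\varphi(y),\Lambda_\varphi(x)\rangle=\varphi(y^*x)$, and set $\xi_\pi:=\Lambda_\varphi(\chi_\pi^*)\in L^2(\mathbb{G})$; by Lemma~\ref{LemChar} one has $\omega_\pi(x)=\varphi(\chi_\pi x)=\langle\xi_\pi,\Lambda_\varphi(x)\rangle$ for $x\in C(\mathbb{G})$. Consider the (external tensor product) right Hilbert $A$-module $\mathcal{F}=A\otimes L^2(\mathbb{G})$. The assignment $\sum_i a_i\otimes b_i\mapsto\sum_i a_i\otimes\Lambda_\varphi(b_i)$ on $A\algtensor C(\mathbb{G})$ extends to a linear contraction $x\mapsto\widehat{x}$ from $A\omin C(\mathbb{G})$ to $\mathcal{F}$, since $\|\widehat{x}\|^2=\|(\iota\otimes\varphi)(x^*x)\|\leq\|x\|^2$. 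Writing $\widehat{\alpha}(a):=\widehat{\alpha(a)}\in\mathcal{F}$, a direct computation (using that $\alpha$ is a $*$-homomorphism) gives the two identities
\[
\langle\widehat{\alpha}(a),\widehat{\alpha}(a)\rangle_A=(\iota\otimes\varphi)(\alpha(a)^*\alpha(a))=E_B(a^*a),\qquad E_\pi(a)=T\,\widehat{\alpha}(a),
\]
where $T\colon\mathcal{F}\to A$ is the operator determined on elementary tensors by $T(a\otimes\eta)=a\,\langle\xi_\pi,\eta\rangle$.

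The key step is then purely formal. The operator $T$ (the external tensor product of $\mathrm{id}_A$ with the bra $\langle\xi_\pi|$) is an adjointable $A$-module map with $T^*(a)=a\otimes\xi_\pi$, so $T^*T(a\otimes\eta)=a\otimes\langle\xi_\pi,\eta\rangle\xi_\pi$, i.e.\ $T^*T=\iota_A\otimes|\xi_\pi\rangle\langle\xi_\pi|$, whence $0\leq T^*T\leq\|\xi_\pi\|^2\,\mathrm{id}_{\mathcal{F}}$. Applying the standard Hilbert C$^*$-module inequality $\langle\zeta,S\zeta\rangle\leq\|S\|\,\langle\zeta,\zeta\rangle$ (valid for positive adjointable $S$) with $S=T^*T$ and $\zeta=\widehat{\alpha}(a)$ yields
\[
E_\pi(a)^*E_\pi(a)=\langle\widehat{\alpha}(a),T^*T\,\widehat{\alpha}(a)\rangle_A\leq\|\xi_\pi\|^2\,\langle\widehat{\alpha}(a),\widehat{\alpha}(a)\rangle_A=\|\xi_\pi\|^2\,E_B(a^*a),
\]
so one may take $c_\pi=\|\xi_\pi\|=\varphi(\chi_\pi\chi_\pi^*)^{1/2}$, which is strictly positive since the Haar state is faithful on $P(\mathbb{G})$ and $\chi_\pi\neq 0$.

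Finally, for the complete (matrix-level) statement I would run exactly the same argument with $M_n(\mathbb{C})\otimes A$ in place of $A$: replace $\alpha$ by $\iota_{M_n(\mathbb{C})}\otimes\alpha$, $\mathcal{F}$ by the right Hilbert $(M_n(\mathbb{C})\otimes A)$-module $M_n(\mathbb{C})\otimes A\otimes L^2(\mathbb{G})$, and $T$ by $\iota_{M_n(\mathbb{C})\otimes A}\otimes\langle\xi_\pi|$; every identity above amplifies verbatim and the same constant $c_\pi$ works. The only genuine point is the very first one --- realising that, despite $E_\pi$ failing to be positive, the estimate follows from the factorisation $E_\pi=T\circ\widehat{\alpha}$ together with the positivity of $T^*T$; everything after that is routine.
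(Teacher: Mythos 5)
Your proof is correct, but it takes a genuinely different route from the paper's. The paper writes $E_{\pi}(a)=(\iota\otimes\varphi)\bigl((1\otimes\chi_{\pi})\alpha(a)\bigr)$ and applies the Kadison--Schwarz inequality $\phi(x)^*\phi(x)\leq\phi(x^*x)$ to the c.c.p.\ map $\phi=\iota\otimes\varphi$, then bounds $(1\otimes\chi_{\pi})^*(1\otimes\chi_{\pi})\leq\|\chi_{\pi}\|^2$ to arrive at $c_{\pi}=\|\chi_{\pi}\|$, the C$^*$-norm in $C(\mathbb{G})$; the reduction to $n=1$ by amplifying the coaction is the same in both arguments. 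You instead factor $E_{\pi}=T\circ\widehat{\alpha}$ through the right Hilbert $A$-module $A\otimes\mathscr{L}^2(\mathbb{G})$, with $T=\iota_A\otimes\langle\xi_{\pi}|$ for $\xi_{\pi}=\Lambda_{\varphi}(\chi_{\pi}^*)$, and read the estimate off the operator inequality $T^*T\leq\|\xi_{\pi}\|^2$. This packages the Schwarz step and the norm bound on $\chi_{\pi}$ into a single positivity statement, and it buys a sharper constant: $c_{\pi}=\varphi(\chi_{\pi}\chi_{\pi}^*)^{1/2}$, the $\mathscr{L}^2$-norm of $\chi_{\pi}^*$, which is at most the paper's $\|\chi_{\pi}\|$. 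Since the lemma only asserts the existence of some $c_{\pi}>0$ (and the later cb-norm estimates in Lemma~\ref{LemNormCompar} only quote $c_{\pi}$ abstractly), either constant serves equally well for the rest of the paper; the paper's argument is shorter and avoids introducing the auxiliary Hilbert module, while yours is slightly more conceptual and quantitatively tighter. All the individual steps you use (contractivity of $x\mapsto\widehat{x}$ from $A\omin C(\mathbb{G})$, adjointability of $T$ as an external tensor product with a rank-one map, and the inequality $\langle\zeta,S\zeta\rangle\leq\|S\|\langle\zeta,\zeta\rangle$ for positive adjointable $S$) are standard and correctly applied.
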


\begin{proof} Since the estimating constant depends only on the representation and not on the coaction, we can restrict to the case $n=1$, by replacing the coaction $\alpha$ with the
amplified coaction $\iota\otimes \alpha$.

Then, using that $\iota\otimes \varphi$ is a c.c.p.~ map of $A\omin C(\mathbb{G})$ onto $A\otimes \mathbb{C}$, and using the inequality $\phi(x)^*\phi(x)\leq \phi(x^*x)$ for a general
c.c.p.~ map $\phi$, we find \begin{equation*} \begin{split} E_{\pi}(a)^*E_{\pi}(a) &= (\iota\otimes \varphi)((1\otimes \chi_{\pi})(\alpha(a)))^* (\iota\otimes \varphi)((1\otimes
\chi_{\pi})\alpha(a))\\ &\leq (\iota\otimes \varphi)(((1\otimes \chi_{\pi})\alpha(a))^*((1\otimes \chi_{\pi})\alpha(a)))\end{split}\end{equation*}
The right hand side is bounded from above by $\|\chi_{\pi}\|^2 (\iota\otimes \varphi)(\alpha(a^*a))= \|\chi_{\pi}\|^2 E_B(a^*a)$ by the positivity of $\iota \otimes \phi$. Setting
$c_\pi = \| \chi_\pi \|$, we obtain the assertion.
\end{proof}

\begin{Cor}\label{CorPropHilb} \begin{thmenum}\item The right $B$-module $A_{\pi}$ is complete with respect to its $B$-valued inner product, that is, $A_{\pi}$ is a right Hilbert
$B$-bimodule.
\item We have $\lbrack A_{\pi}\cdot B \rbrack = A_{\pi}$ and $\lbrack A\cdot B\rbrack = A$, where the closure is with respect to the C$^*$-norm.
\item For each representation $\pi$, the space $A\smbox \mathscr{H}_{\pi}$ is a right Hilbert $B$-bimodule.
\end{thmenum}
\end{Cor}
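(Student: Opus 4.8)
I would treat the three assertions in turn, the main inputs being the Pimsner--Popa type inequality of Lemma~\ref{LemBoun}, the density condition $[\alpha(A)(1\otimes C(\mathbb{G}))]=A\omin C(\mathbb{G})$ built into the definition of an action, and the completeness of $A\otimes\mathscr{H}_\pi$ as a right Hilbert $A$-module. For (1), note first that $A_\pi=E_\pi(A)$ is the range of a bounded idempotent, hence a C$^*$-norm-closed subspace of $A$. On $A_\pi$ the C$^*$-norm and the Hilbert module norm $\|x\|_B=\|\langle x,x\rangle_B\|^{1/2}=\|E_B(x^*x)\|^{1/2}$ are equivalent: one has $\|x\|_B\le\|x\|$ since $E_B$ is contractive, and conversely for $x\in A_\pi$ one has $E_\pi(x)=x$, so Lemma~\ref{LemBoun} with $n=1$ gives $x^*x\le c_\pi^2\,E_B(x^*x)$ and hence $\|x\|^2\le c_\pi^2\|x\|_B^2$. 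In particular $\langle\cdot,\cdot\rangle_B$ is positive definite on $A_\pi$, and since $A_\pi$ is complete for the C$^*$-norm it is complete for the equivalent norm $\|\cdot\|_B$; thus $A_\pi$ is a right Hilbert $B$-module.

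For (2) the key is to produce a bounded approximate identity of $A$ lying inside $B$. Let $(f_\mu)$ be a self-adjoint bounded approximate identity of $A$. Then $\alpha(f_\mu)$ is one for $\alpha(A)$, and by the density condition it is also a bounded approximate identity of $A\omin C(\mathbb{G})$; in particular $\alpha(f_\mu)(a\otimes 1)\to a\otimes 1$ in norm for every $a\in A$. Applying the contraction $\iota\otimes\varphi$ and using $(\iota\otimes\varphi)(T(a\otimes 1))=(\iota\otimes\varphi)(T)\,a$, one gets $E_B(f_\mu)\,a=(\iota\otimes\varphi)(\alpha(f_\mu)(a\otimes 1))\to a$, so $(E_B(f_\mu))$ is a bounded self-adjoint net in $B$ which is an approximate identity of $A$ on both sides. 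Hence $[A\cdot B]=A$, and for $x\in A_\pi$ the elements $x\,E_B(f_\mu)\in A_\pi\cdot B$ converge to $x$, so $A_\pi\subseteq[A_\pi\cdot B]$; the reverse inclusion is immediate from $A_\pi\cdot B\subseteq A_{\pi\times\mathrm{triv}}=A_\pi$ and the closedness of $A_\pi$. (Once (1) is available, $[A_\pi\cdot B]=A_\pi$ also follows from the standard Hilbert C$^*$-module identity $x=\lim_n x\,\langle x,x\rangle_B^{1/n}$.)

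For (3), fix a finite dimensional unitary representation $\pi$ and view $A\otimes\mathscr{H}_\pi$ as a right Hilbert $A$-module; it is complete because $\mathscr{H}_\pi$ is finite dimensional. The subspace $A\smbox\mathscr{H}_\pi$ is the kernel of the bounded linear map $(\alpha\otimes\iota)-(\iota\otimes\delta_\pi)$ on $A\otimes\mathscr{H}_\pi$, hence closed, and its $B$-valued inner product $\langle w,z\rangle_B=w^*z$ is precisely the restriction of the $A$-valued inner product of $A\otimes\mathscr{H}_\pi$, which on equivariant elements happens to take values in $B$. Thus the two induced norms agree on $A\smbox\mathscr{H}_\pi$, so, being closed in a complete Hilbert $A$-module, $A\smbox\mathscr{H}_\pi$ is complete for its own norm; positive definiteness being inherited, it is a right Hilbert $B$-module.

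The only step that requires real care is (2), namely verifying that $B$ acts nondegenerately on $A$: this is exactly where the density condition of the action is used, and I do not see how to avoid it. By contrast, parts (1) and (3) are quick consequences of, respectively, Lemma~\ref{LemBoun} and the completeness of finite-rank free Hilbert $A$-modules.
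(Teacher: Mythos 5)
Your proof is correct, and while part (1) coincides with the paper's argument (equivalence of the C$^*$-norm and the $\|\cdot\|_B$-norm on $A_\pi$ via Lemma~\ref{LemBoun}, plus closedness of $A_\pi$ as the range of a bounded idempotent), parts (2) and (3) take genuinely different routes. For (2), the paper does not touch the density condition of the action: it takes an approximate unit $(b_i)$ of $B$ itself, uses the standard Hilbert-module fact that $ab_i\rightarrow a$ in the $\|\cdot\|_B$-seminorm, upgrades this to C$^*$-norm convergence on each isotypical component via the Pimsner--Popa inequality, and then invokes Podle\'s's theorem that $P(A)$ is norm-dense in $A$. You instead push a bounded approximate identity $(f_\mu)$ of $A$ through $E_B$, using nondegeneracy $\lbrack\alpha(A)(1\otimes C(\mathbb{G}))\rbrack=A\omin C(\mathbb{G})$ and the slice-map identity to get $E_B(f_\mu)a\rightarrow a$; this produces an approximate identity of $A$ \emph{inside} $B$, which is a slightly stronger conclusion and avoids Podle\'s's density theorem, at the cost of invoking the coaction's density axiom. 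For (3), the paper reduces to part (1): by Lemma~\ref{LemMult}, for irreducible $\pi$ the space $A\smbox\mathscr{H}_\pi$ is an orthogonally complemented summand of the complete module $\mathcal{A}_\pi$, and the general case follows since $A\smbox-$ preserves finite direct sums. Your argument is more direct and arguably cleaner: $A\smbox\mathscr{H}_\pi$ is a closed subspace of the complete Hilbert $A$-module $A\otimes\mathscr{H}_\pi$, and the unitarity relation $\sum_j u_{ij}^*u_{kj}=\delta_{ik}$ shows that the $A$-valued inner product restricts on equivariant vectors to the $B$-valued one, so the norms agree and completeness is inherited; this handles all finite dimensional $\pi$ at once without the averaged inner product on $\mathscr{H}_\pi^*$. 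Both versions leave the routine verification that left multiplication by $B$ is adjointable and nondegenerate implicit, which is acceptable.
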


By a right Hilbert $B$-bimodule, we mean a right Hilbert $B$-module $\mathcal{E}$ together with a non-degenerate $^*$-representation of $B$ as adjointable operators on $\mathcal{E}$.

\begin{proof} By the previous lemmas, we have the Pimsner-Popa type inequalities \[ \|\langle a,a\rangle_B\|^{1/2} \leq \|a\| \leq c_{\pi} \|\langle a,a\rangle_B\|^{1/2}\] for $a\in
A_{\pi}$. As $A_{\pi}$ is closed in the C$^*$-algebra norm (being the image of a norm-bounded projection), this proves the first part of the corollary.

One also easily shows that if $b_i$ is an approximate unit for $B$, then for each $a\in A$, we have $ab_i \rightarrow a$ in the Hilbert module norm, and hence also in the C$^*$-norm.
Since the linear span $P(A)$ of all the isotypical components is norm-dense in $A$ by~\cite[Theorem 1.5]{Pod1}, which holds regardless of the unitality of $A$, the second part of the
corollary is proven.

The third part then follows for $\pi$ irreducible because $A\smbox \mathscr{H}_{\pi}$ is an orthogonally complemented summand of $A_{\pi}$ by Lemma~\ref{LemMult}. The general case
follows since $A\smbox -$ preserves finite direct sums.
\end{proof}

\begin{Not} To distinguish the two different norms on $A_{\pi}$, we will write \begin{equation*}\begin{split}
A_{\pi}&\colon A_{\pi} \text{ endowed with the restriction of the C$^*$-norm of $A$},\\
\mathcal{A}_{\pi}&\colon A_{\pi} \text{ endowed with the right Hilbert $B$-module structure}.
\end{split}\end{equation*}
The natural identification map $A_{\pi}\rightarrow \mathcal{A}_{\pi}$ will be denoted by $\Lambda_{\mathcal{A}}\colon A_{\pi}\rightarrow \mathcal{A}_{\pi}$.

We will also denote by $\mathcal{A}$ the right Hilbert $B$-module completion of $A$ with respect to $\langle \,\cdot\,,\,\cdot\,\rangle_B$. We then have natural inclusions
$\mathcal{A}_{\pi}\subseteq \mathcal{A}$.
\end{Not}

We can also put a \emph{right} Hilbert $B$-$\mathcal{K}(\mathcal{A}_{\overline{\pi}})$-bimodule structure on each $\mathcal{A}_{\pi}$ by means of the ordinary left $B$-module
structure and the $\mathcal{K}(\mathcal{A}_{\overline{\pi}})$-valued inner product \[\langle x,y\rangle_{\mathcal{K}(\mathcal{A}_{\overline{\pi}})} =
\Lambda_{\mathcal{A}}(x^*)\Lambda_{\mathcal{A}}(y^*)^*.\] Then we have $\|\langle x,x\rangle_{\mathcal{K}(\mathcal{A}_{\overline{\pi}})}\| = \|E_B(xx^*)\|$, which is the norm squared
obtained by considering $A_{\pi}$ as a \emph{left} Hilbert $B$-module by means of the inner product $_B\langle x,y \rangle= E_B(xy^*)$. Hence we can extend our previous notation as
follows.

\begin{Not} We write \begin{equation*}\mathscr{A}_{\pi}\colon A_{\pi} \text{ endowed with the right $\mathcal{K}(A_{\overline{\pi}})$-Hilbert module structure}.\end{equation*} We
denote the natural identification map $A_{\pi}\rightarrow \mathscr{A}_{\pi}$ by $\Lambda_{\mathscr{A}}\colon A_{\pi} \rightarrow \mathscr{A}_{\pi}$. In the same way, one defines the
right Hilbert $\mathcal{K}(\mathcal{A})$-module completion of $A$ by $\mathscr{A}$.
\end{Not}

Note that we have $\| x \|_{\mathscr{A}_\pi} = \| x^* \|_{\mathcal{A}_{\overline{\pi}}}$.

\begin{Lem}\label{LemNormCompar}
The maps $\Lambda_{\mathcal{A}}$, $\Lambda_{\mathscr{A}}$ are completely bounded.  Similarly, when $\rho$ is a finite dimensional unitary representation of $\mathbb{G}$, the identity
map on $A_\rho$ is completely bounded with respect to the $\mathcal{A}_\rho$-norm or $\mathscr{A}_\rho$-norm on the domain and the $A_\rho$-norm on the codomain.
\end{Lem}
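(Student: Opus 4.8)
The plan is to compute explicitly the matrix amplifications of the three Hilbert-module norms carried by $A_\pi$, and then to feed in the complete positivity of $E_B$ for the forward (identification) estimates and the Pimsner--Popa inequality of Lemma~\ref{LemBoun} for the backward ones. Start with $\Lambda_{\mathcal A}$. Fixing $n$ and $a=(a_{ij})\in M_n(A_\pi)$, regarded at once as an element of $M_n(\mathbb C)\otimes A_\pi\subseteq M_n(A)$ and of the Hilbert $M_n(B)$-module $M_n(\mathcal A_\pi)$ (whose norm on amplified elements is the operator-space norm coming from $\mathcal A_\pi$), the norm of $(\iota_n\otimes\Lambda_{\mathcal A})(a)$ is the square root of the $M_n(B)$-norm of $\big(\sum_k\langle a_{ki},a_{kj}\rangle_B\big)_{ij}=\big(\sum_k E_B(a_{ki}^*a_{kj})\big)_{ij}=(\iota_n\otimes E_B)(a^*a)$, where $a^*$ denotes the adjoint of $a$ in $M_n(A)$. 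Since $E_B$ is c.c.p., $\iota_n\otimes E_B$ is positive and contractive, so this quantity is at most $\|a^*a\|=\|a\|^2_{M_n(A)}$; hence $\Lambda_{\mathcal A}$ is completely contractive, and a fortiori completely bounded.

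For $\Lambda_{\mathscr A}$ I would first record the elementary Hilbert-module identity that, for a right Hilbert $B$-module $\mathcal E$ and a matrix $\Xi=(\xi_{ij})\in M_n(\mathcal E)$ viewed as an adjointable operator $B^n\to\mathcal E^{\,n}$, one has $\|\Xi\|^2=\|\Xi^*\Xi\|_{M_n(B)}=\|\Xi\Xi^*\|_{M_n(\mathcal K(\mathcal E))}$, the last expression being the norm of the matrix $\big(\sum_k|\xi_{ik}\rangle\langle\xi_{jk}|\big)_{ij}$ of rank-one operators. Applying this with $\mathcal E=\mathcal A_{\overline\pi}$ to $(\iota_n\otimes\Lambda_{\mathcal A})(a^*)$, whose $(i,j)$-entry is $\Lambda_{\mathcal A}(a_{ji}^*)\in\mathcal A_{\overline\pi}$, and unwinding the definition $\langle x,y\rangle_{\mathcal K(\mathcal A_{\overline\pi})}=\Lambda_{\mathcal A}(x^*)\Lambda_{\mathcal A}(y^*)^*$, one gets that the $M_n(\mathscr A_\pi)$-norm of $(\iota_n\otimes\Lambda_{\mathscr A})(a)$ equals the $M_n(\mathcal A_{\overline\pi})$-norm of $(\iota_n\otimes\Lambda_{\mathcal A})(a^*)$ (the case $n=1$ being the equality $\|x\|_{\mathscr A_\pi}=\|x^*\|_{\mathcal A_{\overline\pi}}$ already noted in the text). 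By the previous paragraph, applied to the representation $\overline\pi$, this is at most $\|a^*\|_{M_n(A)}=\|a\|_{M_n(A)}$, so $\Lambda_{\mathscr A}$ is completely contractive as well.

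For the reverse estimates, let $\rho$ be finite-dimensional and $a=(a_{ij})\in M_n(A_\rho)$. Since $E_\rho$ is the idempotent onto $A_\rho$, one has $(\iota_n\otimes E_\rho)(a)=a$, so Lemma~\ref{LemBoun} applied to $\rho$ yields $a^*a\le c_\rho^2\,(\iota_n\otimes E_B)(a^*a)$ in $M_n(A)$; taking norms and invoking the identity of the first paragraph gives $\|a\|_{M_n(A)}^2\le c_\rho^2\,\|(\iota_n\otimes\Lambda_{\mathcal A})(a)\|^2_{M_n(\mathcal A_\rho)}$, so the identity map $\mathcal A_\rho\to A_\rho$ is completely bounded with cb-norm at most $c_\rho$. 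For the $\mathscr A_\rho$-version one combines this, applied to $\overline\rho$, with the norm identities $\|a\|_{M_n(\mathscr A_\rho)}=\|a^*\|_{M_n(\mathcal A_{\overline\rho})}$ and $\|a\|_{M_n(A)}=\|a^*\|_{M_n(A)}$ to obtain cb-norm at most $c_{\overline\rho}$.

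The routine but slightly delicate point --- and the only real obstacle --- is the bookkeeping of the matricial operator-space structures on the three Hilbert-module incarnations of $A_\pi$, in particular the ``row versus column'' identity $\|\Xi^*\Xi\|=\|\Xi\Xi^*\|$ used to pass between the $\mathcal K(\mathcal A_{\overline\pi})$-valued inner product defining $\mathscr A_\pi$ and the $B$-valued inner product defining $\mathcal A_{\overline\pi}$; once that is set up, everything follows at once from the complete positivity of $E_B$ and from Lemma~\ref{LemBoun}.
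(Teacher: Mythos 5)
Your proof is correct and follows essentially the same route as the paper's (much terser) argument: complete positivity of $E_B$ for the complete contractivity of $\Lambda_{\mathcal{A}}$, Lemma~\ref{LemBoun} applied with $(\iota\otimes E_\rho)(a)=a$ for the reverse estimate on $\mathcal{A}_\rho$, and the passage to $\mathscr{A}_\pi$ via $\|x\|_{\mathscr{A}_\pi}=\|x^*\|_{\mathcal{A}_{\overline{\pi}}}$ together with $\|\Xi^*\Xi\|=\|\Xi\Xi^*\|$. You have merely written out in full the "standard calculation" the paper leaves implicit.
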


\begin{proof}
The boundedness for the first two maps follows from a standard calculation using the fact that $E_B$ is c.c.p.  Furthermore, Lemma~\ref{LemBoun} shows that $\iota$ is completely
bounded for the $\mathcal{A}_\rho$-norm on the domain, with a norm bounded from above by $c_\pi$.  By the above remark and the general equality $\| x^* x \| = \| x x^* \|$ for
C$^*$-norms, we obtain an analogous cb-norm estimate of $\iota$ from above by $c_{\overline{\pi}}$ for the $\mathscr{A}_{\pi}$-norm on the domain.
\end{proof}

\subsection{Crossed product and its corners}

Let us put $\hat{h} = \phi(h \cdot) \in C(\mathbb{G})^*$ for $h \in P(\mathbb{G})$, and consider the space
\[
\widehat{P(\mathbb{G})} = \{\hat{h} \mid h\in P(\mathbb{G})\} \subseteq C(\mathbb{G})^*.
\]
It is a (generally non-unital) $^*$-algebra by the convolution product and the $^*$-operation which is determined by \[\omega^*(x) = \overline{\omega(S(x)^*)},\qquad x\in
P(\mathbb{G}).\] This $^*$-algebra admits a (non-unital) universal C$^*$-envelope $C^*(\mathbb{G})$, which is a C$^*$-algebraic direct sum of matrix algebras, the components of which
are labeled by $\Irr \mathbb{G}$.

\noindent It is easily seen that we can extend $\alpha$ to a unitary comodule structure on $\mathcal{A}$, \[\alpha_{\mathcal{A}}\colon \mathcal{A}\rightarrow \mathcal{A}\omin
C(\mathbb{G}),\] where we view the right hand side as a right Hilbert $B\omin C(\mathbb{G})$-module in a natural way. Then to the coaction $\alpha_{\mathcal{A}}$, we can associate the
(non-degenerate) $^*$-representation \[\widehat{\pi}_{\alpha}\colon C^*(\mathbb{G}) \rightarrow \mathcal{L}(\mathcal{A}),\] where $\mathcal{L}(\mathcal{A})$ denotes the space of
adjointable linear endomorphisms of $\mathcal{A}$. The map $\widehat{\pi}_{\alpha}$ is uniquely determined by \[\widehat{\pi}_{\alpha}(\omega)\Lambda_{\mathcal{A}}(x) =
\Lambda_{\mathcal{A}}((\iota\otimes \omega)\alpha(x))\qquad \textrm{for }x\in A,\omega\in \widehat{P(\mathbb{G})}.\] Moreover, we have that \[p_{\pi} =
\widehat{\pi}_{\alpha}(\widehat{\chi}_{\pi})\] is the projection of $\mathcal{A}$ onto $\mathcal{A}_{\pi}$. In particular, $\mathcal{A}_{\pi}$ is complemented in $\mathcal{A}$.

If we perform this construction for the particular case of $(A,\alpha) = (C(\mathbb{G}),\Delta)$, the space $\mathcal{A}$ becomes a Hilbert space, which we denote by
$\mathscr{L}^2(\mathbb{G})$. It is the completion of $C(\mathbb{G})$ with respect to the inner product $\langle x,y \rangle = \varphi(x^*y)$. In this case, we denote the associated
GNS-map by $\Lambda_{\varphi}\colon C(\mathbb{G})\rightarrow \mathscr{L}^2(\mathbb{G})$, and we also write $\mathscr{L}^2(\mathbb{G})_{\pi} = \Lambda_{\varphi}(C(\mathbb{G})_{\pi})$
for the finite dimensional Hilbert space of the matrix coefficients for $\pi$. The associated representation $\widehat{\pi}_{\Delta}$ is then faithful, and we will in the following
treat $\widehat{\pi}_{\Delta}$ as the identity map, so that $C^*(\mathbb{G})\subseteq B(\mathscr{L}^2(\mathbb{G}))$.

Consider now the right Hilbert $B$-module $\mathcal{A}\otimes \mathscr{L}^2(\mathbb{G})$. It carries a natural non-degenerate $^*$-representation of $A\omin C(\mathbb{G})$ as
$B$-endomorphisms (cf.~\cite{Lan1}, page 34). By means of the homomorphism $\alpha$, we obtain a representation of $A$ on $\mathcal{A} \otimes \mathscr{L}^2(\mathbb{G})$ as well. Note
that this representation might not be faithful as we do not assume $C(\mathbb{G})$ to be reduced. The space $\mathcal{A} \otimes \mathscr{L}^2(\mathbb{G})$ also carries a
$^*$-representation of $C^*(\mathbb{G})$, acting by the ordinary convolution on the second leg.

The crossed product of $A$ by $\mathbb{G}$ for the action $\alpha$ is defined as
\[
A\rtimes \mathbb{G} = \lbrack \alpha(A)(1\otimes C^*(\mathbb{G}))\rbrack \subset \mathcal{L}(\mathcal{A} \otimes \mathscr{L}^2(\mathbb{G})),
\]
which will be a C$^*$-algebra. One could also use the right Hilbert $A$-module $A\otimes \mathscr{L}^2(\mathbb{G})$ in the above construction, and this would give the same
C$^*$-algebra. We define \[(A\rtimes \mathbb{G})_{\pi} =  \lbrack(A\rtimes \mathbb{G})(1\otimes \widehat{\chi}_{\pi})\rbrack,\] which is a closed left ideal in $A\rtimes \mathbb{G}$.
We similarly define \[_{\pi}(A\rtimes \mathbb{G}) = (A\rtimes \mathbb{G})_{\pi}^*, \quad _{\rho}(A\rtimes \mathbb{G})_{\pi} = \lbrack (A\rtimes \mathbb{G})_{\rho}^* (A\rtimes
\mathbb{G})_{\pi}\rbrack.\] The $_{\pi}(A\rtimes \mathbb{G})_{\pi}$ are then C$^*$-algebras.

We state a lemma which lets us realize our Hilbert modules $\mathcal{A}$ and $\mathscr{A}$ in terms of this crossed product. We omit the proof which follows from a straightforward
computation. In the lemma, we will denote the unit of $C(\mathbb{G})$ by $\chi_{\mathrm{triv}}$ for clarity.

\begin{Lem}\label{LemIdeal} There exists a unique right Hilbert $B$-module structure on $(A\rtimes \mathbb{G})_{\mathrm{triv}}$ such that \[\langle x,y\rangle_B\otimes
\widehat{\chi}_{\mathrm{triv}} = x^*y,\qquad \textrm{for all }x,y\in (A\rtimes \mathbb{G})_{\mathrm{triv}},\] and we have then a natural isomorphism \[\mathcal{A}\rightarrow (A\rtimes
\mathbb{G})_{\mathrm{triv}}\colon \Lambda_{\mathcal{A}}(a) \rightarrow \alpha(a)(1\otimes \widehat{\chi}_{\mathrm{triv}}),\] of right Hilbert $B$-bimodules.
\end{Lem}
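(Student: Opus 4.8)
The plan is to write $p = 1\otimes\widehat{\chi}_{\mathrm{triv}}\in\mathcal{L}(\mathcal{A}\otimes\mathscr{L}^2(\mathbb{G}))$, so that $(A\rtimes\mathbb{G})_{\mathrm{triv}} = [(A\rtimes\mathbb{G})p]$, and first to identify the corner ${}_{\mathrm{triv}}(A\rtimes\mathbb{G})_{\mathrm{triv}} = [p(A\rtimes\mathbb{G})p]$ with $B$. Since $\widehat{\chi}_{\mathrm{triv}}$ is the minimal central projection of $C^*(\mathbb{G}) = \bigoplus^{c_0}_{\rho\in\Irr\mathbb{G}}B(\mathscr{H}_\rho)$ supported on the one–dimensional trivial summand, we have $(1\otimes C^*(\mathbb{G}))p = \mathbb{C}p$, whence $[p(A\rtimes\mathbb{G})p] = [p\,\alpha(A)\,p]$. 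Next I would compute the compression $p\,\alpha(a)\,p$ directly on $\mathcal{A}\otimes\mathscr{L}^2(\mathbb{G})$: the operator $\widehat{\pi}_\Delta(\widehat{\chi}_{\mathrm{triv}})$ is the rank–one projection $q$ onto $\mathbb{C}\Lambda_\varphi(1)$, and the vector state of the unit vector $\Lambda_\varphi(1)$ restricts on $C(\mathbb{G})$ to $\varphi$, because $\langle\Lambda_\varphi(1),c\cdot\Lambda_\varphi(1)\rangle = \langle\Lambda_\varphi(1),\Lambda_\varphi(c)\rangle = \varphi(c)$; hence for any $T\in A\omin C(\mathbb{G})$ one gets $(1\otimes q)\,T\,(1\otimes q) = \big((\iota\otimes\varphi)T\big)\otimes q$, and in particular
\[ p\,\alpha(a)\,p = \big((\iota\otimes\varphi)\alpha(a)\big)\otimes q = E_B(a)\otimes q = \alpha(E_B(a))\,p. \]
As $E_B$ maps $A$ onto $B$, and $b\mapsto\alpha(b)p$ is isometric (the norm of $\alpha(b)$ on the first leg $\mathcal{A}$ equals $\|b\|$, by the approximate-unit argument in the proof of Corollary~\ref{CorPropHilb}), the assignment $\theta\colon b\mapsto\alpha(b)p = b\otimes\widehat{\chi}_{\mathrm{triv}}$ is a $^*$-isomorphism of $B$ onto ${}_{\mathrm{triv}}(A\rtimes\mathbb{G})_{\mathrm{triv}}$.

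Given this, the right Hilbert module structure on $(A\rtimes\mathbb{G})_{\mathrm{triv}}$ is the standard one on a left ideal cut out by a projection: $[(A\rtimes\mathbb{G})p]$ is a right Hilbert ${}_{\mathrm{triv}}(A\rtimes\mathbb{G})_{\mathrm{triv}}$-module with inner product $\langle\xi,\eta\rangle = \xi^*\eta$ and the obvious right action, and transporting the coefficient algebra along $\theta^{-1}$ yields the $B$-valued inner product characterized by $\langle x,y\rangle_B\otimes\widehat{\chi}_{\mathrm{triv}} = x^*y$; injectivity of $\theta$ gives uniqueness. For the asserted isomorphism I would set $\Psi(\Lambda_{\mathcal{A}}(a)) = \alpha(a)p$ on the dense subspace $\Lambda_{\mathcal{A}}(A)\subseteq\mathcal{A}$; then
\[ \langle\Psi(\Lambda_{\mathcal{A}}(a)),\Psi(\Lambda_{\mathcal{A}}(a))\rangle_B\otimes\widehat{\chi}_{\mathrm{triv}} = p\,\alpha(a^*a)\,p = E_B(a^*a)\otimes\widehat{\chi}_{\mathrm{triv}} = \langle\Lambda_{\mathcal{A}}(a),\Lambda_{\mathcal{A}}(a)\rangle_B\otimes\widehat{\chi}_{\mathrm{triv}}, \]
so $\Psi$ preserves the inner products and extends to an isometry $\mathcal{A}\to(A\rtimes\mathbb{G})_{\mathrm{triv}}$. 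Its range is closed and contains $\alpha(A)p$, and $[\alpha(A)p] = [\alpha(A)(1\otimes C^*(\mathbb{G}))p] = (A\rtimes\mathbb{G})_{\mathrm{triv}}$ again by $(1\otimes C^*(\mathbb{G}))p = \mathbb{C}p$, so $\Psi$ is onto. Finally $\Psi$ is right $B$-linear by construction and intertwines the left $B$-actions, since $\Psi(b\cdot\Lambda_{\mathcal{A}}(a)) = \Psi(\Lambda_{\mathcal{A}}(ba)) = \alpha(ba)p = \alpha(b)\,\Psi(\Lambda_{\mathcal{A}}(a))$; hence it is an isomorphism of right Hilbert $B$-bimodules.

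The only step that is not pure bookkeeping is the corner identification ${}_{\mathrm{triv}}(A\rtimes\mathbb{G})_{\mathrm{triv}}\cong B$, and it rests on the two facts isolated above: that $\widehat{\chi}_{\mathrm{triv}}$ is minimal in $C^*(\mathbb{G})$, so that the crossed-product direction collapses to scalars, and that compressing $\alpha(a)$ by $p$ reproduces $\alpha(E_B(a))p$, which is exactly the operator realising the Haar-state averaging defining $E_B$. After that, everything is the universal property of a corner of a C$^*$-algebra together with the definition of $\mathcal{A}$ as the $\langle\,\cdot\,,\,\cdot\,\rangle_B$-completion of $A$; in particular no unitality of $A$ intervenes, since only approximate units and the surjectivity of $E_B\colon A\to B$ are used.
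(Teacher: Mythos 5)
Your proof is correct, and since the paper explicitly omits the proof of this lemma as a ``straightforward computation,'' there is nothing to diverge from: the computation you carry out --- that $\widehat{\chi}_{\mathrm{triv}}$ is the minimal central projection of $C^*(\mathbb{G})$ on the trivial block, so that the corner collapses via $(1\otimes\widehat{\chi}_{\mathrm{triv}})\alpha(a)(1\otimes\widehat{\chi}_{\mathrm{triv}}) = E_B(a)\otimes\widehat{\chi}_{\mathrm{triv}}$ to a copy of $B$ --- is exactly the intended argument, and it is consistent with how the lemma is used immediately afterwards in defining $\Pi_{\alpha}$. The only point worth making explicit is that uniqueness of the right $B$-action (not just of the inner product) also follows, since compatibility $\langle x, yb\rangle_B = \langle x,y\rangle_B b$ together with nondegeneracy of the inner product pins down $yb$.
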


By the previous lemma, it follows that we can realize $\mathcal{K}(\mathcal{A})$ as a closed 2-sided ideal inside $A\rtimes \mathbb{G}$ by means of a $^*$-homomorphism
\begin{equation}\label{EqPiAlphaDef}
\Pi_{\alpha}\colon \mathcal{K}(\mathcal{A}) \rightarrow A\rtimes \mathbb{G}\colon \Lambda_{\mathcal{A}}(x)\Lambda_{\mathcal{A}}(y)^* \rightarrow \alpha(x)(1\otimes
\widehat{\chi}_{\mathrm{triv}})\alpha(y^*),
\end{equation}
which is in general degenerate. We can interpret $\mathscr{A}$ as a right $B$-$A\rtimes \mathbb{G}$-Hilbert bimodule by composing its $\langle
\,\cdot\,,\,\cdot\,\rangle_{\mathcal{K}(\mathcal{A})}$-valued inner product with the map $\Pi_{\alpha}$. We then have an isomorphism of right Hilbert $B$-$A\rtimes
\mathbb{G}$-bimodules \[\mathscr{A}\rightarrow \,\!_{\mathrm{triv}}(A\rtimes \mathbb{G})\colon \Lambda_{\mathscr{A}}(a)\rightarrow (1\otimes
\widehat{\chi}_{\mathrm{triv}})\alpha(a).\]

Note now that we can identify $\mathcal{A}$ with the closure of $(\Lambda_{\mathcal{A}}\otimes \Lambda_{\varphi})(\alpha(A))$ inside $\mathcal{A}\otimes \mathscr{L}^2(\mathbb{G})$. We
obtain in this way that $\mathcal{A}$ is an $A\rtimes \mathbb{G}$-invariant subspace of $\mathcal{A}\otimes \mathscr{L}^2(\mathbb{G})$, and we denote the resulting $^*$-representation
of $A\rtimes \mathbb{G}$ on $\mathcal{A}$ by $\pi_{\textrm{red}}$.  Its restriction to $A$ is given by left multiplication, while its restriction to $C^*(\mathbb{G})$ is the
representation $\widehat{\pi}_{\alpha}$. It is also easy to see that $\pi_{\textrm{red}} \circ \Pi_{\alpha}$ is the identity map on $\mathcal{K}(\mathcal{A})$.

\subsection{Galois maps}

We can interpret $A$ itself as a right Hilbert $B$-$A$-Hilbert module, the bimodule structure being given by multiplication and the $A$-valued inner product by $\langle x,y\rangle_A =
x^*y$. Further, for any representation $\pi$, we interpret $\mathscr{A}\otimes \mathscr{L}^2(\mathbb{G})_{\pi}$ as a right $A\rtimes \mathbb{G}$\,-Hilbert module by means of the
identification map
\begin{equation}\label{EqScrAL2GpiIsPiAG}
\mathscr{A}\otimes \mathscr{L}^2(\mathbb{G})_{\pi} \rightarrow \,\!_{\pi}(A\rtimes \mathbb{G})\colon \Lambda_{\mathscr{A}}(x) \otimes \Lambda_{\varphi}(S(h)) \rightarrow (1\otimes
\widehat{h})\alpha(x),\qquad x\in A,h\in C(\mathbb{G})_{\overline{\pi}}.
\end{equation}
It requires a small argument to show that this Hilbert module is complete, but we will actually never use this fact.

\begin{Def-Prop}\label{PropGalIsom} The map \[A_{\pi} \algtensor P(A) \rightarrow P(A)\otimes C(\mathbb{G})_{\pi}\colon a\otimes a' \rightarrow \alpha(a)(a'\otimes 1)\] extends
uniquely to isometric maps \[G_{\pi}\colon \mathcal{A}_{\pi}\otimes_B A \rightarrow A\otimes \mathscr{L}^2(\mathbb{G})_{\pi},\; \mathcal{G}_{\pi}\colon \mathcal{A}_{\pi}\otimes_B
\mathcal{A} \rightarrow \mathcal{A}\otimes \mathscr{L}^2(\mathbb{G})_{\pi}\; \textrm{ and }\; \mathscr{G}_{\pi}\colon \mathcal{A}_{\pi}\otimes_B \mathscr{A} \rightarrow
\mathscr{A}\otimes \mathscr{L}^2(\mathbb{G})_{\pi} \] between right Hilbert modules.

We shall call the above maps the $\pi$-localised \emph{C$^*$-}, \emph{Hilbert C$^*$-} and \emph{crossed product-Galois map}, respectively.
\end{Def-Prop}

Here $-\otimes_B-$ denotes the interior tensor product over $B$, see~\cite{Lan1}. Also, for the first two maps, the tensor product on the right is simply an amplification of the
corresponding right Hilbert modules with the Hilbert space $\mathscr{L}^2(\mathbb{G})_{\pi}$.

\begin{proof} A trivial computation shows that the proposed formula for $G_{\pi}$ and $\mathcal{G}_{\pi}$ respects the $A$-valued, resp.~ $B$-valued inner product on elementary
tensors, so that they descend and complete to maps with domain $\mathcal{A}_{\pi}\otimes_B A$, resp.~ $\mathcal{A}_{\pi}\otimes_B \mathcal{A}$. The statement about $\mathscr{G}_{\pi}$
follows from the following easily verified identity inside $A\rtimes \mathbb{G}$: \[(1 \otimes \widehat{S^{-1}(x_{(1)})})\alpha(x_{(0)}) = \alpha(x)(1\otimes
\widehat{\chi}_{\mathrm{triv}}), \qquad x\in A_{\pi},\] where we used the Sweedler notation $\alpha(x) = x_{(0)}\otimes x_{(1)}$.
\end{proof}

One can show that these Galois maps add up to respective isometries $\mathcal{A}\otimes_B- \rightarrow -\otimes \mathscr{L}^2(\mathbb{G})$ which are $C^*(\mathbb{G})$-equivariant from
the second to the first leg - we will use this observation only for the cases $A$ and $\mathcal{A}$. Note that the terminology `Galois map' comes from the corresponding Hopf algebraic
theory, cf.~\cite{Sch1}.

\section{Adjointability of the Galois maps}\label{adjt}

\noindent We keep the notational conventions of the previous section. In particular, $\mathbb{G}$ is a compact quantum group acting on a not necessarily unital C$^*$-algebra $A$, with
fixed point C$^*$-algebra $B$. We first recall a general fact about the amplification of morphisms of Hilbert C$^*$-bimodules.

\begin{Lem}\label{LemModMapAmpl}
Let $C$ and $D$ be C$^*$-algebras, $\mathcal{E}$ and $\mathcal{E}'$ respectively be a right Hilbert C$^*$-$B$-$C$-bimodule and a right Hilbert C$^*$-$B$-$D$-bimodule.  If $T$ is a
completely bounded $B$-module map from $\mathcal{E}$ to $\mathcal{E}'$, then $\iota \otimes T$ descends to a bounded map $\iota \otimes_B T$ of norm at most $\| T \|_{\mathrm{cb}}$
from $\mathcal{F} \otimes_B \mathcal{E}$ to $\mathcal{F} \otimes_B \mathcal{E}'$ for any right Hilbert C$^*$-$B$-module $\mathcal{F}$.
\end{Lem}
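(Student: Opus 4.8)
The plan is to bound $\iota\otimes T$ directly on elements of the form $\xi=\sum_{k=1}^{m}f_k\otimes e_k$ with $f_k\in\mathcal{F}$, $e_k\in\mathcal{E}$, which span a dense subspace of $\mathcal{F}\otimes_B\mathcal{E}$, and then to conclude by density. So it is enough to produce the estimate $\|(\iota\otimes T)\xi\|\le\|T\|_{\mathrm{cb}}\|\xi\|$ for each such finite sum.

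First I would diagonalise the Gram matrix of the $f_k$. The matrix $\beta=[\langle f_k,f_l\rangle_B]_{k,l}$ is a positive element of the C$^*$-algebra $M_m(B)$, so its square root $\gamma=\beta^{1/2}=[\gamma_{jk}]_{j,k}$ again lies in $M_m(B)$ and $\langle f_k,f_l\rangle_B=\sum_{j}\gamma_{jk}^*\gamma_{jl}$. Put $\eta_j=\sum_{k}\gamma_{jk}\cdot e_k\in\mathcal{E}$. Since $B$ acts on $\mathcal{E}$ and on $\mathcal{E}'$ by adjointable operators through $^*$-homomorphisms, and since $T$ commutes with these actions, unwinding the definition of the interior tensor product inner products gives
\[
\langle\xi,\xi\rangle_C=\sum_{k,l}\bigl\langle e_k,\langle f_k,f_l\rangle_B\cdot e_l\bigr\rangle_C=\sum_{j}\langle\eta_j,\eta_j\rangle_C,
\]
\[
\bigl\langle(\iota\otimes T)\xi,(\iota\otimes T)\xi\bigr\rangle_C=\sum_{k,l}\bigl\langle Te_k,\langle f_k,f_l\rangle_B\cdot Te_l\bigr\rangle_C=\sum_{j}\langle T\eta_j,T\eta_j\rangle_C,
\]
the last equality using $\gamma_{jk}\cdot Te_k=T(\gamma_{jk}\cdot e_k)$. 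Thus the problem is reduced to the free case $\mathcal{F}=B^m$: it suffices to prove $\bigl\|\sum_j\langle T\eta_j,T\eta_j\rangle_C\bigr\|\le\|T\|_{\mathrm{cb}}^2\bigl\|\sum_j\langle\eta_j,\eta_j\rangle_C\bigr\|$ for arbitrary $\eta_1,\dots,\eta_m\in\mathcal{E}$.

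For this last estimate I would use the canonical operator-space structure of a Hilbert C$^*$-module, for which the norm of a matrix $[x_{kl}]\in M_m(\mathcal{E})$ equals $\bigl\|[\sum_k\langle x_{ki},x_{kj}\rangle_C]_{i,j}\bigr\|_{M_m(C)}^{1/2}$. Taking $E=[e_{kl}]\in M_m(\mathcal{E})$ with $e_{k1}=\eta_k$ and $e_{kl}=0$ for $l\ge 2$, one gets $\|E\|_{M_m(\mathcal{E})}^2=\bigl\|\sum_k\langle\eta_k,\eta_k\rangle_C\bigr\|$ and, for $T^{(m)}(E)=[Te_{kl}]$, $\|T^{(m)}(E)\|^2=\bigl\|\sum_k\langle T\eta_k,T\eta_k\rangle_C\bigr\|$; the inequality $\|T^{(m)}(E)\|\le\|T\|_{\mathrm{cb}}\|E\|$ is then exactly what is needed. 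Combining this with the two displays above yields $\|(\iota\otimes T)\xi\|\le\|T\|_{\mathrm{cb}}\|\xi\|$ for all finite sums $\xi$, and density finishes the argument.

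The step requiring the most care is the reduction: one must verify that $\beta^{1/2}$ has entries in $B$ (which is automatic, since $M_m(B)$ is a C$^*$-algebra even for non-unital $B$) and that each left multiplication by $\langle f_k,f_l\rangle_B=\sum_j\gamma_{jk}^*\gamma_{jl}$ can be factored and moved onto the two sides of the inner product — this is precisely where the adjointability of the $B$-action and the $B$-linearity of $T$ enter. The remainder is a mechanical computation with interior tensor products; in particular no appeal to Kasparov's stabilization theorem, nor any countable-generation hypothesis on $\mathcal{F}$, is required.
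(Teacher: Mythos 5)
Your proof is correct and follows essentially the same route as the paper's: factor the Gram matrix $[\langle f_k,f_l\rangle_B]$ as $b^*b$ (the paper takes any such $b$, you take the square root), absorb $b$ into the $\mathcal{E}$-leg to reduce to a column vector $(\eta_j)_j$, and apply the complete boundedness of $T$ for the canonical column/matrix norms on $\mathcal{E}$. The two arguments differ only in presentation, so there is nothing further to reconcile.
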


\begin{proof}
This is shown by a standard argument.  Let $x_1, \ldots, x_n$ be elements in $\mathcal{E}$, and $y_1, \ldots, y_n$ be in $\mathcal{F}$.  Then the matrix $Y = (\langle y_i, y_j
\rangle_B)_{i, j}$ in $M_n(B)$ is a positive element.  Hence there exists $b = (b_{i, j})_{i,j} \in M_n(B)$ satisfying $b^* b = Y$.  Then one has
\[
\Bigl\langle \iota \otimes T\Bigl(\sum_i y_i \otimes x_i \Bigr), \iota \otimes T \Bigl(\sum_j y_j \otimes x_j \Bigr) \Bigr\rangle_D = \sum_{i, j} \left\langle T(x_i), Y_{i, j} T(x_j)
\right\rangle_D.
\]
The right hand side is equal to
\[
\sum_{i, j, k} \left\langle T(x_i), b_{k, i}^* b_{k, j} T(x_j) \right\rangle_D = \| \iota \otimes T (\xi) \|^2,
\]
where $\xi \in \mathbb{C}^n\otimes \mathcal{E}$ is a column vector whose $k$-th component is equal to $\sum_i b_{k, i} x_i$.  By the complete boundedness of $T$, we obtain
\[
\| \iota \otimes T (\xi) \|^2 \le \| T \|_{\mathrm{cb}}^2 \| \xi \|^2 = \| T \|_{\mathrm{cb}}^2 \Bigl \| \Bigl\langle \sum_i y_i \otimes x_i, \sum_j y_j \otimes x_j \Bigr\rangle_C
\Bigr \|^2,
\]
which implies the desired estimate $\| \iota \otimes_B T \| \le \| T \|_{\mathrm{cb}}$.
\end{proof}

The adjointability of the Galois maps are in fact equivalent conditions.

\begin{Prop}\label{PropAd} Let $\mathbb{G}$ be a compact quantum group acting on a unital C$^*$-algebra $A$. Let $\pi$ be a finite dimensional unitary representation of $\mathbb{G}$.
The following conditions are equivalent.
\begin{thmenum} \item The $\pi$-localized Hilbert-C$^*$-Galois map $\mathcal{G}_{\pi}$ has an adjoint.
\item The $\pi$-localized C$^*$-Galois map $G_{\pi}$ has an adjoint.
\item The $\pi$-localized crossed product-Galois map $\mathscr{G}_{\pi}$ has an adjoint.
\end{thmenum}
\end{Prop}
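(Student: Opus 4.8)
The plan is to exploit the fact that the three Hilbert modules $A$, $\mathcal{A}$, $\mathscr{A}$ are related by the completely bounded identification maps $\Lambda_{\mathcal{A}}$ and $\Lambda_{\mathscr{A}}$ (Lemma~\ref{LemNormCompar}), and that the three Galois maps $G_\pi$, $\mathcal{G}_\pi$, $\mathscr{G}_\pi$ are the ``same'' map $a\otimes a'\mapsto \alpha(a)(a'\otimes 1)$ read through these different completions. Concretely, one has commuting squares relating $G_\pi$, $\mathcal{G}_\pi$, $\mathscr{G}_\pi$: each horizontal arrow is a Galois map, and each vertical arrow is of the form $\Lambda_{\mathcal{A}}\otimes_B(\text{id or }\Lambda)$ tensored with the identity on $\mathscr{L}^2(\mathbb{G})_\pi$, using Lemma~\ref{LemModMapAmpl} to see these amplified maps are bounded (indeed completely bounded). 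Since adjointability of a bounded operator between right Hilbert modules is detected on a dense subspace, and all these maps agree on the common algebraic core $A_\pi\algtensor P(A)$ mapping into $P(A)\otimes C(\mathbb{G})_\pi$, the whole problem reduces to transporting the existence of an adjoint along the completely bounded maps $\Lambda_{\mathcal{A}}$, $\Lambda_{\mathscr{A}}$ and the inclusion $P(A)\hookrightarrow \mathcal{A}$.

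First I would set up precisely the commuting diagrams. For (1)$\Leftrightarrow$(2): the map $\Lambda_{\mathcal{A}}\colon A\to\mathcal{A}$ and the identity $A\to A$ (viewed as $\mathcal{A}_\pi$-valued and $A_\pi$-valued inputs appropriately) give, after applying Lemma~\ref{LemModMapAmpl}, bounded maps $\mathcal{A}_\pi\otimes_B A\to\mathcal{A}_\pi\otimes_B\mathcal{A}$ and the identity on $A\otimes\mathscr{L}^2(\mathbb{G})_\pi$, intertwining $G_\pi$ and $\mathcal{G}_\pi$ on the algebraic core; but the vertical maps here are not themselves adjointable in an obvious way, so the naive ``sandwich'' argument does not close. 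The cleaner route is: $\mathcal{A}$ is, by Corollary~\ref{CorPropHilb} and the Pimsner--Popa inequalities, the completion of $A$ in a norm equivalent (via a cb map, not an isometry) to the $C^*$-norm on each $A_\pi$; hence $\mathcal{A}_\pi\otimes_B A$ and $\mathcal{A}_\pi\otimes_B\mathcal{A}$ have the same algebraic core with equivalent norms, so a densely-defined operator with an adjoint-candidate on one completes to an adjointable operator iff it does on the other. I would phrase this as: if $\mathcal{G}_\pi$ has adjoint $\mathcal{G}_\pi^*$, then $\mathcal{G}_\pi^*$ restricted to the core $\mathcal{A}\otimes\mathscr{L}^2(\mathbb{G})_\pi \supseteq P(A)\otimes\mathscr{L}^2(\mathbb{G})_\pi$ lands in the core $\mathcal{A}_\pi\otimes_B P(A)$, on which the $A$-norm and the $\mathcal{A}$-norm are cb-equivalent, and this candidate then serves as the adjoint of $G_\pi$ after checking the defining identity $\langle G_\pi\xi,\eta\rangle = \langle\xi,\mathcal{G}_\pi^*\eta\rangle$ survives the change of inner product; the converse is symmetric.

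For (1)$\Leftrightarrow$(3) I would use the crossed product picture: by the isomorphisms $\mathcal{A}\cong {}_{\mathrm{triv}}(A\rtimes\mathbb{G})^*$, $\mathscr{A}\cong {}_{\mathrm{triv}}(A\rtimes\mathbb{G})$ and \eqref{EqScrAL2GpiIsPiAG}, the map $\mathscr{G}_\pi$ is identified, via the identity of Definition-Proposition~\ref{PropGalIsom}'s proof $(1\otimes\widehat{S^{-1}(x_{(1)})})\alpha(x_{(0)})=\alpha(x)(1\otimes\widehat\chi_{\mathrm{triv}})$, with the map induced by multiplication in $A\rtimes\mathbb{G}$, i.e. $\mathscr{A}_\pi\otimes_B\mathscr{A}\to\mathscr{A}\otimes\mathscr{L}^2(\mathbb{G})_\pi$ becomes ${}_{\pi}(A\rtimes\mathbb{G})$ realized inside ${}_{\pi}(A\rtimes\mathbb{G})\otimes_{A\rtimes\mathbb{G}}{}_{\mathrm{triv}}(A\rtimes\mathbb{G})$, which is a corner of $A\rtimes\mathbb{G}$ and hence its own left-module map is automatically adjointable — so I expect that (3) is secretly about $\mathcal{G}_\pi$ after tensoring over $A\rtimes\mathbb{G}$. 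The honest mechanism is that $\mathscr{A}=\mathcal{A}\otimes_B(\text{something})$? — no; rather $\mathscr{G}_\pi = \mathcal{G}_\pi\otimes_{A\rtimes\mathbb{G}}\mathrm{id}$ under $\mathscr{A}\cong\mathcal{A}\otimes_{?}$, which does not literally hold, so instead I would argue directly: $\Lambda_{\mathscr{A}}$ is cb, and on the algebraic core all three Galois maps coincide, so adjointability transfers exactly as in the (1)$\Leftrightarrow$(2) step, now using that $\mathscr{A}_\pi$ carries the $\mathcal{K}(A_{\overline\pi})$-valued structure with norm cb-equivalent to the $C^*$-norm by Lemma~\ref{LemNormCompar}.

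The main obstacle is the one just flagged: ``transfer adjointability along a completely bounded bijection with dense image'' is \emph{not} a formal triviality, because a cb map need not be adjointable and the adjoint of a composite is not the composite of adjoints. The resolution — and the real content of the proof — is that each vertical map here is an \emph{isomorphism of the underlying vector spaces on a common dense core} with norms that are merely equivalent, so one shows instead that a densely defined operator $T$ on core $\mathcal{D}$ into core $\mathcal{D}'$ is adjointable with respect to one pair of (equivalent) Hilbert-module inner products iff it is with respect to another, \emph{provided the formal adjoint, computed on the core, maps $\mathcal{D}'$ back into $\mathcal{D}$}; and that last provision is exactly what one verifies by the explicit Sweedler-type formulas for the Galois maps together with Lemma~\ref{LemChar}. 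So the skeleton is: (i) identify the common core and the three descriptions of the Galois map on it; (ii) verify, via Lemma~\ref{LemModMapAmpl} and Lemma~\ref{LemNormCompar}, that all the relevant change-of-completion maps are cb and invertible on the core; (iii) check that a formal-adjoint candidate, if it exists for one model, is automatically core-to-core and hence bounded in every model; (iv) conclude the three adjointabilities are equivalent.
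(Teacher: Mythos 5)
Your overall architecture is the same as the paper's: restrict the putative adjoint to the algebraic core $P(A)\algtensor C(\mathbb{G})_\pi$, transfer it to the other completion using the complete boundedness of the identification maps (Lemma~\ref{LemNormCompar}) amplified via Lemma~\ref{LemModMapAmpl}, verify the formal adjoint identity against the other inner product, and conclude by the inner-product characterization of adjointability. You have also correctly identified the central difficulty, namely that adjointability does not transfer formally along a completely bounded non-adjointable bijection.

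However, your resolution of that difficulty has a genuine gap. You assert that the formal adjoint is ``core-to-core'' and that this is verified by ``Sweedler-type formulas together with Lemma~\ref{LemChar}.'' Neither is right. The adjoint $\mathcal{G}_\pi^*(\Lambda_{\mathcal{A}}(a)\otimes\Lambda_\varphi(h))$ for $a\in A_\rho$ does \emph{not} land in the algebraic core $\mathcal{A}_\pi\algtensor P(A)$; a priori it only lands in the completion $\mathcal{A}_\pi\otimes_B\mathcal{A}$, and the norm comparison constants $c_\theta$ of Lemma~\ref{LemNormCompar} are not uniform over $\theta\in\Irr\mathbb{G}$, so an element spread over infinitely many isotypical blocks could not be transferred to the other completion at all. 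What actually saves the argument --- and what the paper proves as its ``claim'' --- is that $\mathcal{G}_\pi^*(\Lambda_{\mathcal{A}}(a)\otimes\Lambda_\varphi(h))$ lies in the \emph{single} block $\mathcal{A}_\pi\otimes_B\mathcal{A}_{\overline{\pi}\times\rho}$. This is established by pairing against $\Lambda_{\mathcal{A}}(x)\otimes\Lambda_{\mathcal{A}}(y)$ with $y\in A_\theta$ and showing the result vanishes whenever $\theta$ does not occur in $\overline{\pi}\times\rho$, because the relevant expression involves only matrix coefficients of $\overline{\rho}\times\pi\times\theta$, which by Frobenius reciprocity contains no trivial subrepresentation and is therefore killed by $\varphi$. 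Lemma~\ref{LemChar} (on quantum characters) plays no role here. Once this localization is in hand, the norm equivalence on the fixed block $\overline{\pi}\times\rho$ does the rest, exactly as you intend; but without it your step (iii) is an unproved assertion, and it is precisely the step on which the whole transfer hinges.
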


\begin{proof}
Each of the implications can be argued along the same pattern; given the adjoint of one of the Galois maps, we may restrict it to isotypical components $A_\rho \algtensor
C(\mathbb{G})_\pi$ for $\rho \in \Irr \mathbb{G}$.  Then, Lemmas~\ref{LemNormCompar} and~\ref{LemModMapAmpl} imply that those restrictions are continuous with respect to the other
norms.  The resulting map on $P(A) \algtensor C(\mathbb{G})_\pi$ is shown to be a formal adjoint of the Galois with respect to the corresponding algebra valued inner product.  By
inner product characterization of duals for Hilbert C$^*$-modules, we obtain that this formal adjoint extends to the actual adjoint.  As an illustration, let us prove the implication
$(1) \Rightarrow (2)$.

Assume that (1) holds.

Take a finite dimensional unitary representation $\rho$ of $\mathbb{G}$, $h\in C(\mathbb{G})_{\pi}$, and $a\in A_{\rho}$. We first claim that
$\mathcal{G}_{\pi}^*(\Lambda_{\mathcal{A}}(a)\otimes \Lambda_{\varphi}(h))$ is contained in $\mathcal{A}_{\pi}\otimes_B \mathcal{A}_{\overline{\pi}\times \rho}$. Observe that we can
write
\[
\mathcal{A}_{\pi} \otimes_B \mathcal{A} = \bigoplus_{\theta \in \Irr \mathbb{G}}( \mathcal{A}_{\pi} \otimes_B \mathcal{A}_{\theta}).
\]
Take an irreducible representation $\theta$ which does not appear in $\overline{\pi}\times \rho$, and take $x\in A_{\pi}$, $y\in A_{\theta}$. By a direct calculation, we obtain
\begin{multline*} \langle \mathcal{G}_{\pi}^*(\Lambda_{\mathcal{A}}(a)\otimes \Lambda_{\varphi}(h)), \Lambda_{\mathcal{A}}(x)\otimes \Lambda_{\mathcal{A}}(y)\rangle  = (\iota\otimes
\varphi\otimes \varphi)((\alpha(a^*)\otimes h^*)((\alpha\otimes \iota)\alpha(x))(\alpha(y)\otimes 1)).\end{multline*} But we see that the second factor of the evaluated element only
contains matrix coefficients of the representation $\overline{\rho}\times \pi\times \theta$, which does not contain the trivial representation by the assumption on $\theta$ and
Frobenius reciprocity. Hence the above expression is zero, and the claim follows.

Combining Lemmas~\ref{LemNormCompar} and~\ref{LemModMapAmpl}, we obtain a map $\mathcal{A}_\pi \otimes_B \mathcal{A}_{\overline{\pi} \times \rho} \rightarrow \mathcal{A}_\pi \otimes_B
A_{\overline{\pi} \times \rho}$.  Composing this with $\mathcal{G}_\pi^*$, we obtain a map from $A_\rho \algtensor C(\mathbb{G})_\pi$ to $\mathcal{A}_\pi \otimes_B A_{\overline{\pi}
\times \rho}$.  Taking linear combinations, we obtain a map $F_0$ from $P(A) \algtensor C(\mathbb{G})_\pi$ to $\mathscr{A}_\pi \otimes_B A$. We want to show that $F_0$ is bounded, and
that its closure equals $G_{\pi}^*$.

Fix $a\in A_{\rho}$ and $h\in C(\mathbb{G})_{\pi}$, and take elements $(x_{n,i})_{i = 1}^{N_n} \in A_{\pi}$ and $(y_{n,i})_{i = 1}^{N_n} \in A_{\overline{\pi}\times \rho}$ for $n, N_n
\in \mathbb{N}$ such that
\[
\mathcal{G}_{\pi}^*(\Lambda_{\mathcal{A}}(a)\otimes \Lambda_{\varphi}(h)) = \lim_{n\rightarrow \infty} \sum_{i=1}^{N_n} \Lambda_{\mathcal{A}}(x_{n,i})\otimes
\Lambda_{\mathcal{A}}(y_{n,i}).
\]
Take $x\in A_{\pi}$ and $y\in P(A)$.  On the one hand, we have the convergence
\begin{multline*}
\sum_i E_B(y_{n,i}^*E_B(x_{n,i}^*x)y) = \sum_{i} \langle \Lambda_\mathcal{A}(x_{n, i}) \otimes \Lambda_\mathcal{A}(y_{n, i}), \Lambda_\mathcal{A}(x) \otimes \Lambda_\mathcal{A}(y)
\rangle\\
\underset{n \rightarrow \infty}\longrightarrow \langle \Lambda_\mathcal{A}(a) \otimes \Lambda_\varphi(h), \mathcal{G}_\pi(x \otimes y) \rangle = (E_B\otimes \varphi)((a^*\otimes
h^*)\alpha(x)(y\otimes 1)).
\end{multline*}
On the other hand, we also have the convergence
\[
\langle F_0(a\otimes h), \Lambda_{\mathcal{A}}(x)\otimes y\rangle = \lim_{n \rightarrow \infty} \sum_{i=1}^{N_n} y_{n,i}^*E_B(x_{n,i}^*x)y.
\]
Multiplying to the right with an arbitrary $z\in P(A)$, we find that
\begin{equation*}
E_B(\langle F_0(a\otimes h),\Lambda_{\mathcal{A}}(x)\otimes y)\rangle \cdot z ) \\ = E_B(\langle \Lambda_{\mathcal{A}}(a)\otimes
\Lambda_{\varphi}(h),G_{pi}(\Lambda_{\mathcal{A}}(x)\otimes y)\rangle \cdot z).
\end{equation*}
As $z$ was arbitrary, and as $\langle \,\cdot\,,\,\cdot\,\rangle_B$ is non-degenerate on $P(A)$, we find that
\[\langle F_0(a\otimes h),\Lambda_{\mathcal{A}}(x)\otimes y\rangle = \langle \Lambda_{\mathcal{A}}(a)\otimes \Lambda_{\varphi}(h),G_{\pi}(\Lambda_{\mathcal{A}}(x)\otimes y)\rangle.\]
From this formula, we obtain that $F_0$ descends to a contractive map $F$ from $A\otimes \mathscr{L}^2(\mathbb{G})_{\pi}$ to $\mathcal{A}\otimes_B A$, and that $F$ is then precisely
the adjoint of $G_{\pi}$, hence we obtain (2).
\end{proof}

The elements in the algebra $B$ act on $\mathcal{A}_\pi$ as left multiplication operators, which are adjointable endomorphisms for the right Hilbert $B$-module structure.  We let
$\pi_L$ denote the associated embedding of $B$ into $\mathcal{L}(\mathcal{A}_\pi)_B$.  The amplification of left multiplication defines an analogous action of $B$ on $A \smbox
\mathscr{H}_\pi$.  By abuse of notation, we  denote this representation also by $\pi_L$ (see Lemma~\ref{LemMult}).

\begin{Theorem}\label{ThmAdjProj}
Let $\alpha$ be an action of $\mathbb{G}$ on a C$^*$-algebra $A$, and let $\pi$ be a finite dimensional unitary representation of $\mathbb{G}$.  Then the following conditions are
equivalent.

\begin{thmenum}
\item The $\pi$-localized Galois maps are adjointable.
\item The image of $\,\!_\pi(A\rtimes \mathbb{G})$ under $\pi_{\textrm{red}}$ lies in $\mathcal{K}(\mathcal{A},\mathcal{A}_{\pi})$.
\item The image of $\pi_L\colon B \rightarrow \mathcal{L}(\mathcal{A}_\pi)$ is contained in $\mathcal{K}(\mathcal{A}_{\pi})$.
\item[{\rm (3')}] The image of $\pi_L\colon B \rightarrow \mathcal{L}(A\smbox \mathscr{H}_{\pi})$ is contained in $\mathcal{K}(A\smbox \mathscr{H}_{\pi})$.
\end{thmenum}

Furthermore, when $A$ is unital, these conditions are equivalent to the following statements.

\begin{enumerate}\item[{\rm (4)}] $\mathcal{A}_{\pi}$ is finitely generated and projective as a right $B$-module.
\item[{\rm (4')}] $\mathcal{A}\smbox \mathscr{H}_{\pi}$ is finitely generated projective as a right $B$-module.
\end{enumerate}
\end{Theorem}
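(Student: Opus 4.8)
The plan is to establish a cycle of implications connecting the four principal conditions, with the extra conditions (4) and (4') attached via Kasparov-type arguments in the unital case. First I would observe that the equivalence (3) $\Leftrightarrow$ (3') is immediate from Lemma~\ref{LemMult}, since $\phi_\pi$ realizes $\mathcal{A}_\pi$ as $(A\smbox\mathscr{H}_\pi)\otimes\mathscr{H}_\pi^*$ as a $B$-bimodule compatible with inner products, and tensoring with the finite-dimensional $\mathscr{H}_\pi^*$ carries compact endomorphisms to compact endomorphisms and conversely (it amounts to passing to matrix amplifications). So I will focus on (1), (2), (3).

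For (1) $\Leftrightarrow$ (2): the crossed product-Galois map $\mathscr{G}_\pi$ is, under the identifications \eqref{EqScrAL2GpiIsPiAG} and $\mathscr{A}\cong\,{}_{\mathrm{triv}}(A\rtimes\mathbb{G})$, essentially the multiplication map ${}_{\mathrm{triv}}(A\rtimes\mathbb{G})\otimes_B \mathcal{A}_\pi \to {}_\pi(A\rtimes\mathbb{G})$, $w\otimes\Lambda_{\mathscr{A}}(a)\mapsto$ the product in $A\rtimes\mathbb{G}$. Adjointability of a map of this shape into an ideal of a C$^*$-algebra is tied to whether the range sits in the compacts: concretely, I would push the analysis through $\pi_{\mathrm{red}}$, using that $\pi_{\mathrm{red}}\circ\Pi_\alpha = \iota$ on $\mathcal{K}(\mathcal{A})$ and that $\pi_{\mathrm{red}}$ represents ${}_\pi(A\rtimes\mathbb{G})$ on $\mathcal{A}$ with range landing in $\mathcal{L}(\mathcal{A},\mathcal{A}_\pi)$ (since $p_\pi = \widehat{\pi}_\alpha(\widehat\chi_\pi)$ projects onto $\mathcal{A}_\pi$). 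The point is that $\mathscr{G}_\pi$ adjointable $\Leftrightarrow$ its range, as maps $\mathcal{A}\to\mathcal{A}_\pi$, consists of adjointable operators that are actually compact — and by Proposition~\ref{PropAd} we are free to replace $\mathscr{G}_\pi$ by $\mathcal{G}_\pi$ or $G_\pi$ as convenient. For (2) $\Leftrightarrow$ (3): the algebra ${}_\pi(A\rtimes\mathbb{G})$ is generated (as a closed left ideal structure over ${}_\pi(A\rtimes\mathbb{G})_\pi$) by the image of $\alpha(A)$ against $\widehat\chi_\pi$; the key computation is that $\pi_{\mathrm{red}}$ sends these generators to rank-one-type operators $\Lambda_{\mathcal{A}}(x)\Lambda_{\mathcal{A}}(y)^*$ composed with $p_\pi$, so that the whole image lies in $\mathcal{K}(\mathcal{A},\mathcal{A}_\pi)$ precisely when $p_\pi \widehat{\pi}_\alpha(C^*(\mathbb{G})) p_{\mathrm{triv}}$-type products are compact, and one reduces this — using that $\widehat\chi_\pi = \widehat\chi_\pi \widehat\chi_{\overline\pi}\widehat\chi_\pi$ up to the convolution algebra structure — to the compactness of $\pi_L(B)\subseteq\mathcal{L}(\mathcal{A}_\pi)$. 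Here the bridge is that $B = A^{\mathbb{G}}$ acting by left multiplication on $\mathcal{A}_\pi$ is exactly $p_\pi\,\pi_{\mathrm{red}}(\alpha(B))\,p_\pi$.

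For the unital case, (3) $\Rightarrow$ (4) is the Kasparov stabilization argument: when $A$ is unital, $1\in A$, and $\mathcal{A}_\pi$ carries the left $B$-action $\pi_L$; if $\pi_L(1)=\mathrm{id}_{\mathcal{A}_\pi}$ is compact, then $\mathcal{A}_\pi$ is a Hilbert $B$-module whose identity is a compact operator, hence (by the standard fact that such a module is algebraically finitely generated projective) $\mathcal{A}_\pi$ is finitely generated projective over $B$. Conversely (4) $\Rightarrow$ (3) is trivial since on a finitely generated projective module every adjointable operator, in particular the identity, is compact. The equivalence (4) $\Leftrightarrow$ (4') again goes through Lemma~\ref{LemMult}, tensoring with the finite-dimensional space $\mathscr{H}_\pi^*$. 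I would finally note that when $A$ is unital, $\mathcal{A}_\pi$ and $A_\pi$ carry the same underlying $B$-module (the norms are equivalent by Lemma~\ref{LemNormCompar}), so (4) can be stated for $A_\pi$ too if desired.

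The main obstacle I anticipate is the bookkeeping in (2) $\Leftrightarrow$ (3): one must carefully track the convolution $^*$-algebra structure of $C^*(\mathbb{G})$, the various identifications of $\mathcal{A}$, $\mathscr{A}$, $\mathcal{A}_\pi$ inside $A\rtimes\mathbb{G}$ from Lemmas~\ref{LemIdeal} and the subsequent discussion, and the precise sense in which $\pi_{\mathrm{red}}$ intertwines $\Pi_\alpha$ with the inclusion $\mathcal{K}(\mathcal{A})\hookrightarrow A\rtimes\mathbb{G}$. The potential subtlety is that $C(\mathbb{G})$ need not be reduced, so $\pi_{\mathrm{red}}$ may fail to be faithful; one must check that none of the equivalences secretly requires faithfulness, which works because everything relevant is detected already on the $\mathcal{A}_\pi$ side where $p_\pi$ is a genuine projection and the compactness of $\pi_L(B)$ is an intrinsic statement. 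A secondary point to handle with care is that in the non-unital case $\mathcal{K}(\mathcal{A})$ embeds only as a generally degenerate ideal, so (2) must be phrased in terms of $\mathcal{K}(\mathcal{A},\mathcal{A}_\pi)$ rather than any unit-dependent formulation, and the step $\mathscr{G}_\pi$ adjointable $\Rightarrow$ range in compacts should be argued using approximate units for $B$ together with Corollary~\ref{CorPropHilb}(2).
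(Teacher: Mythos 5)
Your architecture is sound in outline, and several pieces match the paper: the identification of $\mathscr{G}_\pi$ with $\Pi_\alpha$ and the use of $\pi_{\mathrm{red}}\circ\Pi_\alpha=\iota$ to relate (1) and (2) (the paper only runs this in the direction $(1)\Rightarrow(2)$, showing $\mathscr{G}_\pi^*(x)=\pi_{\mathrm{red}}(x)$, but your converse via the ideal property of $\Pi_\alpha(\mathcal{K}(\mathcal{A}))$ also works); the reduction of the primed conditions via Lemma~\ref{LemMult}; and the unital equivalence with (4) via Kasparov stabilization, exactly as in the paper. The problem is the implication $(3)\Rightarrow(2)$ (equivalently $(3)\Rightarrow(1)$), which is the substantive direction of the theorem, and your sketch of it does not work as written. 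First, the generators $(1\otimes\widehat\chi_\pi)\alpha(a)(1\otimes\omega)$ of $\,\!_\pi(A\rtimes\mathbb{G})$ are \emph{not} sent by $\pi_{\mathrm{red}}$ to ``rank-one-type operators $\Lambda_{\mathcal{A}}(x)\Lambda_{\mathcal{A}}(y)^*$ composed with $p_\pi$'': operators of that form are precisely the image of $\Pi_\alpha$, i.e.\ already compact, and $\Pi_\alpha$ is in general degenerate, so this description would make condition (2) automatic and the theorem vacuous. Second, the identity $\widehat\chi_\pi=\widehat\chi_\pi\widehat\chi_{\overline\pi}\widehat\chi_\pi$ on which your reduction to $\pi_L(B)$ rests is false: for irreducible representations the elements $\widehat\chi_\rho$ are mutually orthogonal central projections of $C^*(\mathbb{G})$, so $\widehat\chi_\pi\widehat\chi_{\overline\pi}=0$ unless $\pi\cong\overline\pi$.

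The missing idea is the factorization of elements of $A$ through the fixed point algebra. By Corollary~\ref{CorPropHilb}(2) one has $\lbrack B\cdot A_\rho\rbrack=A_\rho$; writing $a\in A_\rho$ as a limit of sums $\sum_i b_ia_i$ with $b_i\in B$, and using that $\pi_L(b_i)$ commutes with $\widehat{\pi}_\alpha$ and with $p_\pi$, the operator $p_\pi\,\pi_{\mathrm{red}}(a)\widehat{\pi}_\alpha(\omega)$ becomes a limit of sums of $\bigl(\pi_L(b_i)|_{\mathcal{A}_\pi}\bigr)\circ\bigl(p_\pi\,\pi_{\mathrm{red}}(a_i)\widehat{\pi}_\alpha(\omega)\bigr)$, that is, a compact operator on $\mathcal{A}_\pi$ (by hypothesis (3)) composed with an adjointable operator $\mathcal{A}\rightarrow\mathcal{A}_\pi$, hence compact. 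This is exactly the mechanism the paper uses (there phrased as: $\widehat{\pi}_\alpha(\widehat{S^{-1}(h)})a$ lies in $\mathcal{K}(\mathcal{A}_{\overline{\rho}\times\pi},\mathcal{A}_\pi)$ because $\lbrack A_\rho B\rbrack=A_\rho$), after which the paper constructs the adjoint of $\mathcal{G}_\pi$ explicitly via $\Upsilon_\pi^{-1}$ together with the norm-comparison Lemmas~\ref{LemNormCompar} and~\ref{LemModMapAmpl}. Your alternative of going $(3)\Rightarrow(2)\Rightarrow(1)$ and then invoking Proposition~\ref{PropAd} would be a legitimate and somewhat shorter route, but only once this factorization step is supplied; as it stands you cite Corollary~\ref{CorPropHilb}(2) only at the very end and attach it to the direction ``adjointable $\Rightarrow$ range in compacts'', where it is not needed, rather than here, where it is essential.
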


\begin{proof}
\emph{Proof of $(1)\Rightarrow (2)$}. Let us identify $\mathscr{A}\otimes \mathscr{L}^2(\mathbb{G})_{\pi}$ with $\,\!_{\pi}(A\rtimes \mathbb{G})$ as explained above
Proposition~\ref{PropGalIsom}. Let us also identify $\mathcal{A}_{\pi}\otimes_B \mathscr{A}$ with the right Hilbert $\mathcal{K}(\mathcal{A})$-module
$\mathcal{K}(\mathcal{A},\mathcal{A}_{\pi})$ of compact operators from $\mathcal{A}$ to $\mathcal{A}_{\pi}$, by means of the natural map
\begin{equation*}
\Upsilon_\pi\colon \mathcal{A}_{\pi}\otimes_B \mathscr{A} \rightarrow  \mathcal{K}(\mathcal{A},\mathcal{A}_{\pi})\colon \Lambda_{\mathcal{A}}(x)\otimes \Lambda_{\mathscr{A}}(y)
\rightarrow \Lambda_{\mathcal{A}}(x)\Lambda_{\mathcal{A}}(y^*)^* .
\end{equation*}
Then $\mathscr{G}_{\pi}$ becomes the map $\Pi_{\alpha}$ of~\eqref{EqPiAlphaDef}.

As $\mathscr{G}_{\pi}$ is adjointable, we have, for $x$ in ${}_{\pi}(A\rtimes \mathbb{G}) \simeq \mathscr{A} \otimes \mathscr{L}^2(\mathbb{G})_\pi$ (see~\eqref{EqScrAL2GpiIsPiAG}) and
$y$ in $\mathcal{K}(\mathcal{A},\mathcal{A}_{\pi})$, that \[\Pi_{\alpha}((\mathscr{G}_{\pi}^*(x))^* y)  = x^*\Pi_{\alpha}(y).\] Applying $\pi_{\textrm{red}}$, we conclude that
$(\mathscr{G}_{\pi}^*(x))^* y = \pi_{\textrm{red}}(x)^*y$ for all $y\in \mathcal{K}(\mathcal{A},\mathcal{A}_{\pi})$, and hence $\mathscr{G}_{\pi}^*(x)= \pi_{\textrm{red}}(x) \in
\mathcal{K}(\mathcal{A})$.

The implication $(2)\Rightarrow (3)$ is of course trivial.

\emph{Proof of $(3)\Rightarrow (1)$}. Take $a\in P(A)$, say $a\in A_{\rho}$ for some representation $\rho$, and $h\in C(\mathbb{G})_{\pi}$. Since $\lbrack A_{\rho}B\rbrack = A_{\rho}$
by Corollary~\ref{CorPropHilb}, the operator $\widehat{\pi}_{\alpha}(\widehat{S^{-1}(h)})a$ is a compact operator in $\mathcal{K}(\mathcal{A}_{\overline{\rho}\times
\pi},\mathcal{A}_{\pi})$.

Let $\Upsilon_{\pi,\overline{\pi}\times \rho}\colon \mathcal{A}_{\pi} \otimes_B \mathscr{A}_{\overline{\pi}\times \rho} \rightarrow
\mathcal{K}(\mathcal{A}_{\overline{\rho}\times\pi},\mathcal{A}_{\pi})$ be the restriction of $\Upsilon_\pi$.  It is a complete isometry.  Moreover, combining
Lemmas~\ref{LemNormCompar} and~\ref{LemModMapAmpl}, we obtain an injective bounded map $\mathscr{S}_{\pi, \overline{\pi} \times \rho}$ from $\mathcal{A}_\pi \otimes_B
\mathscr{A}_{\overline{\pi} \times \rho}$ to $\mathcal{A}_\pi \otimes_B \mathcal{A}_{\overline{\pi} \times \rho}$.

Define a map $F_0$ from $P(A) \algtensor C(\mathbb{G})_{\pi}$ to $\mathcal{A}_{\pi}\otimes_B\mathcal{A}$ by
\[
F_0(a \otimes h) = (\mathscr{S}_{\pi,\overline{\pi}\times\rho}\circ \Upsilon_{\pi,\overline{\pi}\times \rho}^{-1})(\widehat{\pi}_{\alpha}(\widehat{S^{-1}(h)})a).
\]
Let us choose for each $n\in \mathbb{N}$ a finite collection of $x_{n,i}\in A_{\pi},y_{n,i}\in A_{\overline{\pi}\times \rho}$ such that
\[
F_0(a \otimes h) = \lim_{n\rightarrow \infty} \sum_i \Lambda_{\mathcal{A}}(x_{n,i})\otimes \Lambda_{\mathcal{A}}(y_{n,i}).
\]
Then by definition of $F_0(a \otimes h)$, we have the equality \[\lim_n \sum_i \Lambda_{\mathcal{A}}(x_{n,i})\Lambda_{\mathcal{A}}(y_{n,i}^*)^*
=\widehat{\pi}_{\alpha}(\widehat{S^{-1}(h)})a\] as operators from $\mathcal{A}_{\overline{\rho}\times \pi}$ to $\mathcal{A}_{\pi}$. Applying the $^*$-operation to both sides, and
applying these expressions to $\Lambda_{\mathcal{A}}(x)$ for some $x\in A_{\pi}$, we see that \[\lim_{n} \sum_i \Lambda_{\mathcal{A}}(y_{n,i}^*E_B(x_{n,i}^*x)) =\Lambda_{\mathcal{A}}(
(\iota\otimes \varphi)((a^*\otimes h^*)\alpha(x))).\]

In the above formula, the vectors inside $\Lambda_{\mathcal{A}}$ belong to $\mathcal{A}_{\overline{\pi} \times \rho}$.  Since the norms on $A_{\overline{\pi} \times \rho}$ and
$\mathcal{A}_{\overline{\pi} \times \rho}$ are equivalent, the convergence still holds in $A_{\overline{\pi} \times \rho}$. Hence if $x\in A_{\pi}$ and $y\in P(A)$, we have
\begin{eqnarray*}
\langle F_0(a \otimes h), \Lambda_{\mathcal{A}}(x)\otimes \Lambda_{\mathcal{A}}(y)\rangle &=& \lim_n \sum_i E_B(y_{n,i}^* E_B(x_{n,i}^*x)y) \\ &=& E_B(a^* (\iota\otimes
\varphi(h^*\,\cdot\,))(\alpha(x))y)\\ &=& \langle \Lambda_{\mathcal{A}}(a)\otimes \Lambda_{\varphi}(h),\mathcal{G}_{\pi}(\Lambda_{\mathcal{A}}(x)\otimes
\Lambda_{\mathcal{A}}(y))\rangle.\end{eqnarray*}

As in the proof of Proposition~\ref{PropAd}, we can conclude that $F_0$ extends to a bounded map $F$ from $\mathcal{A}\otimes \mathscr{L}^2(\mathbb{G})_{\pi}$ to
$\mathcal{A}_{\pi}\otimes_B \mathcal{A}$, which is then the adjoint of $\mathcal{G}_{\pi}$.  This way we obtain that the $\pi$-localized Galois maps are adjointable.

The equivalence between (3) and (3'), and between (4) and (4') follows from Lemma~\ref{LemMult}, as those conditions on $\pi$ are clearly equivalent to having the corresponding ones
for each irreducible subrepresentation of $\pi$.

Finally, assuming $A$ is unital, we show that (3) and (4) are equivalent. In fact, if $A$ is unital, also $B$ is unital, and the third condition simply says that
$\mathcal{K}(\mathcal{A}_{\pi})$ is unital. In particular, $\mathcal{A}_{\pi}$ must be countably generated as a Hilbert module. By~\cite[Lemma~6.5]{Kas1} and Kasparov's stabilisation
theorem~\cite[Corollary 6.3]{Lan1}, $\mathcal{A}_{\pi}\cong pB^n$ for some $n\in \mathbb{N}$ and some self-adjoint projection $p\in M_n(B)$, and is hence finitely generated
projective. Conversely, if $A_{\pi}$ is finitely generated projective,~\cite[Lemma~6.5]{Kas1} implies that $\mathcal{K}(\mathcal{A}_{\pi})$ is unital.  Since
$\mathcal{K}(\mathcal{A}_{\pi})$ is an ideal in $\mathcal{L}(\mathcal{A}_{\pi})$, this completes the proof.
\end{proof}

\begin{Rem}
\begin{enumerate}
\item The equivalence of adjointability of the Galois maps is not used in an essential way in the proof of Theorem~\ref{ThmAdjProj}.  In fact, it is possible to prove
    Proposition~\ref{PropAd} and Theorem~\ref{ThmAdjProj} `at once', by the arguments of the implications \ref{PropAd}.(3) $\Rightarrow$ \ref{ThmAdjProj}.(1) and
    \ref{ThmAdjProj}.(3) $\Rightarrow$ \ref{PropAd}.(1).
\item Let us assume our action is ergodic, which means $B= \mathbb{C}1$ (and nessarily $A$ unital). Then $\mathcal{A}$ is a Hilbert space, and as the Hilbert C$^*$-Galois map is
    then an isometry between Hilbert spaces, it is necessarily adjointable. Hence the implication $(1) \Rightarrow (5)$ of the previous result captures as a special case the fact
    that the isotypical components of an ergodic action are finite-dimensional (cf.~\cite{Boc1}). Of course, this special case can be proven more directly (whilst obtaining a
    stronger conclusion about the minimal number of generators).
\item Let $\mathbb{G}$ be a classical compact group acting on some compact space $X$. One can prove that the adjointability of the corresponding Galois maps is equivalent with the
    following purely topological condition: if $G_x\subseteq G$ is the stabilizer group of the element $x\in X$, then the assignment $x\rightarrow G_x$ is continuous, with respect
    to the natural topology known as finite or Vietoris topology on the space of closed subsets of $G$.
\end{enumerate}
\end{Rem}

We end this section with the following observation about the range projection of the Galois isometries.

\begin{Prop}\label{PropComp} Suppose that the $\pi$-localized Galois maps are adjointable. Then $\mathscr{G}_{\pi}\mathscr{G}_{\pi}^* $ is a central element inside
$\mathcal{M}(\,\!_{\pi}(A\rtimes \mathbb{G})_{\pi})$, and \[\mathscr{G}_{\pi}\mathscr{G}_{\pi}^* = G_{\pi}G_{\pi}^* = \mathcal{G}_{\pi}\mathcal{G}_{\pi}^*.\]
\end{Prop}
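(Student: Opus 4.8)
\medskip
The plan is to work throughout in the crossed-product picture, since the statement concerns $\mathcal{M}({}_{\pi}(A\rtimes\mathbb{G})_{\pi})$. Put $\mathcal{N}=\mathscr{A}\otimes\mathscr{L}^2(\mathbb{G})_{\pi}$ and $D={}_{\pi}(A\rtimes\mathbb{G})_{\pi}$; by~\eqref{EqScrAL2GpiIsPiAG} we identify $\mathcal{N}$ with ${}_{\pi}(A\rtimes\mathbb{G})$, so that $D=\mathcal{K}(\mathcal{N})$ and $\mathcal{L}(\mathcal{N})=\mathcal{M}(D)$, which is where $\mathscr{G}_{\pi}\mathscr{G}_{\pi}^{*}$ lives. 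From the proof of the implication \ref{ThmAdjProj}.(1)$\Rightarrow$\ref{ThmAdjProj}.(2), under the additional identification $\Upsilon_{\pi}\colon\mathcal{A}_{\pi}\otimes_{B}\mathscr{A}\cong\mathcal{K}(\mathcal{A},\mathcal{A}_{\pi})$ the map $\mathscr{G}_{\pi}$ becomes the restriction of $\Pi_{\alpha}$, while $\mathscr{G}_{\pi}^{*}$ becomes $\pi_{\textrm{red}}$. Hence $\mathscr{G}_{\pi}\mathscr{G}_{\pi}^{*}$ is the idempotent $\Pi_{\alpha}\circ\pi_{\textrm{red}}$ restricted to ${}_{\pi}(A\rtimes\mathbb{G})$; its range is the closed subspace $\mathcal{R}=\Pi_{\alpha}(\mathcal{K}(\mathcal{A},\mathcal{A}_{\pi}))$ contained in the closed two-sided ideal $I=\Pi_{\alpha}(\mathcal{K}(\mathcal{A}))$ of $A\rtimes\mathbb{G}$. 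Recall finally that $\pi_{\textrm{red}}\circ\Pi_{\alpha}=\iota_{\mathcal{K}(\mathcal{A})}$, so that $\Pi_{\alpha}\circ\pi_{\textrm{red}}$ restricts to the identity on $I$.

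For the centrality statement I would show that $\mathcal{R}$ is invariant under the left action of $\mathcal{K}(\mathcal{N})=D$. Using that ${}_{\pi}(A\rtimes\mathbb{G})_{\pi}=[{}_{\pi}(A\rtimes\mathbb{G})\cdot(A\rtimes\mathbb{G})_{\pi}]$ together with condition~\ref{ThmAdjProj}.(2), a short computation gives $\pi_{\textrm{red}}(D)\subseteq[\mathcal{K}(\mathcal{A},\mathcal{A}_{\pi})\mathcal{K}(\mathcal{A}_{\pi},\mathcal{A})]=\mathcal{K}(\mathcal{A}_{\pi})$. Now $D\cdot\mathcal{R}\subseteq D\cdot I\subseteq I$, and applying the $^{*}$-homomorphism $\pi_{\textrm{red}}$ yields $\pi_{\textrm{red}}(D\cdot\mathcal{R})=\pi_{\textrm{red}}(D)\,\pi_{\textrm{red}}(\mathcal{R})\subseteq\mathcal{K}(\mathcal{A}_{\pi})\cdot\mathcal{K}(\mathcal{A},\mathcal{A}_{\pi})\subseteq\mathcal{K}(\mathcal{A},\mathcal{A}_{\pi})$; since $\Pi_{\alpha}\circ\pi_{\textrm{red}}$ is the identity on $I\supseteq D\cdot\mathcal{R}$, it follows that $D\cdot\mathcal{R}=\Pi_{\alpha}(\pi_{\textrm{red}}(D\cdot\mathcal{R}))\subseteq\Pi_{\alpha}(\mathcal{K}(\mathcal{A},\mathcal{A}_{\pi}))=\mathcal{R}$. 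Therefore the self-adjoint projection $q=\mathscr{G}_{\pi}\mathscr{G}_{\pi}^{*}$ onto $\mathcal{R}$ satisfies $(1-q)\,d\,q=0$ for every $d\in D$; taking adjoints and using that $D$ is $^{*}$-closed, also $q\,d\,(1-q)=0$, so $q$ commutes with $D$ and hence, by strict density, with all of $\mathcal{M}(D)$.

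For the equalities, I would first record that $\mathcal{G}_{\pi}=G_{\pi}\otimes_{A}\iota_{\mathcal{A}}$: indeed $\mathcal{A}$ is a right Hilbert $A$-$B$-bimodule (left multiplication by $A$ is adjointable for the $B$-valued inner product), there are canonical isomorphisms $(\mathcal{A}_{\pi}\otimes_{B}A)\otimes_{A}\mathcal{A}\cong\mathcal{A}_{\pi}\otimes_{B}\mathcal{A}$ and $(A\otimes\mathscr{L}^2(\mathbb{G})_{\pi})\otimes_{A}\mathcal{A}\cong\mathcal{A}\otimes\mathscr{L}^2(\mathbb{G})_{\pi}$, and under them $G_{\pi}\otimes_{A}\iota_{\mathcal{A}}$ is given on elementary tensors by the very formula $a\otimes a'\mapsto\alpha(a)(a'\otimes1)$ defining $\mathcal{G}_{\pi}$. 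As interior tensoring with a fixed bimodule is functorial and preserves adjoints, $\mathcal{G}_{\pi}\mathcal{G}_{\pi}^{*}=(G_{\pi}G_{\pi}^{*})\otimes_{A}\iota_{\mathcal{A}}$, which is exactly the equality $G_{\pi}G_{\pi}^{*}=\mathcal{G}_{\pi}\mathcal{G}_{\pi}^{*}$ under the canonical $^{*}$-homomorphism $\mathcal{L}(A\otimes\mathscr{L}^2(\mathbb{G})_{\pi})\to\mathcal{L}(\mathcal{A}\otimes\mathscr{L}^2(\mathbb{G})_{\pi})$. For the remaining equality $\mathcal{G}_{\pi}\mathcal{G}_{\pi}^{*}=\mathscr{G}_{\pi}\mathscr{G}_{\pi}^{*}$ in $\mathcal{M}(D)$, the point is that $G_{\pi}$, $\mathcal{G}_{\pi}$ and $\mathscr{G}_{\pi}$ all extend the single algebraic Galois map $a\otimes a'\mapsto\alpha(a)(a'\otimes1)$ on $\mathcal{A}_{\pi}\algtensor P(A)$, whereas—by the present hypothesis, using that $\mathcal{A}_{\pi}$ is finitely generated projective over $B$ (\ref{ThmAdjProj}.(4)) when $A$ is unital, and the identifications in the proof of Theorem~\ref{ThmAdjProj} in general—their adjoints all restrict to the single algebraic map $F_{0}$ of the proof of Proposition~\ref{PropAd} on $P(A)\algtensor C(\mathbb{G})_{\pi}$, which now takes values in $\mathcal{A}_{\pi}\algtensor P(A)$. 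Hence all three range projections agree with this common algebraic composition on the dense subspace $P(A)\algtensor C(\mathbb{G})_{\pi}$, and, transporting this identity through the identification $\mathscr{A}\otimes\mathscr{L}^2(\mathbb{G})_{\pi}\cong{}_{\pi}(A\rtimes\mathbb{G})$ and the analogous crossed-product realization of $\mathcal{A}\otimes\mathscr{L}^2(\mathbb{G})_{\pi}$, one obtains the asserted equality in $\mathcal{M}(D)$.

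The delicate point is this last step: $\mathscr{A}$ is the \emph{conjugate} of the Hilbert module $\mathcal{A}$, not an interior-tensor amplification of it, and the comparison maps $\Lambda_{\mathcal{A}},\Lambda_{\mathscr{A}}$ together with their formal inverses on isotypical components are completely bounded only blockwise and not uniformly over $\Irr\mathbb{G}$, so some care is needed to realize the three completions of $P(A)$ compatibly inside the crossed product; one should also phrase the algebraic adjoint $F_{0}$ directly for non-unital $A$. The centrality assertion and the equality $G_{\pi}G_{\pi}^{*}=\mathcal{G}_{\pi}\mathcal{G}_{\pi}^{*}$, by contrast, are immediate from the machinery already in place.
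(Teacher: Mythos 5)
Your argument is correct in substance, and its overall skeleton (identify $\mathscr{G}_{\pi}$ with $\Pi_{\alpha}$ and $\mathscr{G}_{\pi}^*$ with $\pi_{\mathrm{red}}$; deduce the equalities from coincidence on the dense subspace $P(A)\algtensor C(\mathbb{G})_{\pi}$ together with the compatibility of the various module structures under the $\Lambda$-maps) is the paper's, but two of your steps take genuinely different routes. For centrality, the paper does not argue via invariance of the range $\mathcal{R}$ under $D$: it simply observes that $P_{\pi}x=\Pi_{\alpha}(\pi_{\mathrm{red}}(x))$ for $x\in A\rtimes\mathbb{G}$ and that both $\Pi_{\alpha}$ and $\pi_{\mathrm{red}}$ are $^*$-homomorphisms, so $P_{\pi}x=\Pi_{\alpha}(\pi_{\mathrm{red}}(x^*))^*=(P_{\pi}x^*)^*=xP_{\pi}$ in one line; your range-invariance argument is valid but longer, and it silently uses the injectivity of $\Pi_{\alpha}$ on $\mathcal{K}(\mathcal{A})$ (which does follow from $\pi_{\mathrm{red}}\circ\Pi_{\alpha}=\iota$). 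For $G_{\pi}G_{\pi}^*=\mathcal{G}_{\pi}\mathcal{G}_{\pi}^*$, your observation that $\mathcal{G}_{\pi}=G_{\pi}\otimes_A\iota_{\mathcal{A}}$ and that interior tensoring preserves adjoints is a clean alternative to the paper's dense-subspace comparison and buys you that equality without any reference to the explicit form of the adjoints. Concerning the ``delicate point'' you flag: the non-uniformity over $\Irr\mathbb{G}$ of the blockwise cb-constants is harmless, because the ``claim'' in the proof of Proposition~\ref{PropAd} shows that the adjoint sends each $\Lambda(A_{\rho})\otimes C(\mathbb{G})_{\pi}$ into the single block $\mathcal{A}_{\pi}\otimes_B\mathcal{A}_{\overline{\pi}\times\rho}$, so any given element of $P(A)\algtensor C(\mathbb{G})_{\pi}$ only ever meets finitely many isotypical components, where all the relevant norms are equivalent; this is exactly how the paper reconciles the three completions. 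Your assertion that $F_0$ takes values in the \emph{algebraic} tensor product $\mathcal{A}_{\pi}\algtensor P(A)$ is an overstatement in the non-unital setting, but it is not needed: coincidence of the three compositions on the dense subspace, valued in the completed blocks, suffices.
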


\begin{proof} Let us write $P_{\pi} = \mathscr{G}_{\pi}\mathscr{G}_{\pi}^*$. Then we have \[P_{\pi} \in \mathcal{L}(\mathscr{A}\otimes \mathscr{H}_{\pi})\cong
\mathcal{L}(\,\!_{\pi}(A\rtimes\mathbb{G})) \cong \mathcal{M}({}_{\pi}(A\rtimes\mathbb{G})_{\pi}),\] where $\mathcal{M}$ means taking the multiplier C$^*$-algebra. Moreover, as in the
proof of the previous theorem, we can interpret $\mathscr{G}$ and its adjoint as the maps $\Pi_{\alpha}$ and $\pi_{\textrm{red}}$ respectively. Hence for $x\in A\rtimes \mathbb{G}$,
we have $P_{\pi}x = \Pi_{\alpha}(\pi_{\textrm{red}}(x))  = \Pi_{\alpha}(\pi_{\textrm{red}}(x^*))^* = (P_{\pi}x^*)^* = xP_{\pi}$, so that $P_{\pi}$ is central.

From the `claim' in the proof of Proposition~\ref{PropAd}, we have the inclusion \[\mathcal{G}_{\pi}\mathcal{G}_{\pi}^*(\Lambda_{\mathcal{A}}(P(A))\otimes C(\mathbb{G})_{\pi})
\subseteq \Lambda_{\mathcal{A}}(P(A))\otimes C(\mathbb{G}_{\pi}).\] And from the construction of the maps $G_{\pi}^*$ and $\mathscr{G}_{\pi}^*$ there, the applications
$\mathcal{G}_{\pi}\mathcal{G}_{\pi}^*$, $G_{\pi}G_{\pi}^*$ and $\mathscr{G}_{\pi}\mathscr{G}_{\pi}^*$ coincide on $P(A)\otimes C(\mathbb{G})_{\pi}$ (after applying the suitable
$\Lambda$-maps).

Finally, a simple algebraic computation, coupled with a continuity argument, allows us to conclude that for $x\in \,\!_{\pi}(A\rtimes \mathbb{G})_{\pi}$ and $\xi\in A\otimes
\mathscr{L}^2(\mathbb{G})_{\pi}$, one has \[(\Lambda_{\mathcal{A}}\otimes \iota)(x\xi) = x((\Lambda_{\mathcal{A}}\otimes \iota)\xi),\] and
\[(\Lambda_{\mathscr{A}}\otimes \iota)(x\xi) = x((\Lambda_{\mathscr{A}}\otimes \iota)\xi),\] where the left $\,\!_{\pi}(A\rtimes \mathbb{G})_{\pi}$-module structure on
$\mathscr{A}\otimes \mathscr{L}^2(\mathbb{G})_{\pi}$ is obtained again by making first the identification with $\,\!_{\pi}(A\rtimes \mathbb{G})$. It follows that $G_{\pi}G_{\pi}^* =
\mathcal{G}_{\pi}\mathcal{G}_{\pi}^* = P_{\pi}$.\end{proof}

\section{Freeness of compact quantum group actions}\label{free}

We keep the same notation as in the previous section, thus $\alpha$ is an action of a compact quantum group $\mathbb{G}$ on a C$^*$-algebra $A$.

The action $\alpha$ is said to satisfy \textit{the Ellwood condition}, or simply be \emph{free}, if the following cancellation property holds:
\[\lbrack \alpha(A)(A\otimes 1)\rbrack = A\omin C(\mathbb{G}).\] This condition was introduced in~\cite{Ell1} in a more general setting of actions by locally compact quantum groups.
It is straightforward to check that if $A = C_0(X)$ for some locally compact space $X$, and $\mathbb{G}$ is an ordinary compact group $G$, then the above condition characterizes
precisely the freeness of the action of $G$ on $X$.

Since the subspaces $(\mathcal{A}_\pi)_{\pi \in \Irr \mathbb{G}}$ of $\mathcal{A}$ are orthogonal to each other, and similarly for the supspaces $(\mathscr{L}^2(\mathbb{G})_\pi)_{\pi
\in \Irr \mathbb{G}}$ in $\mathscr{L}^2(\mathbb{G})$, the isometries $G_{\pi}$ and $\mathcal{G}_{\pi}$, add up to respective isometries $G$ and $\mathcal{G}$.

\begin{Prop}\label{PropFree} The following conditions are equivalent.
\begin{thmenum}\item The action $\alpha$ is free.
\item The Hilbert C$^*$-Galois isometry $\mathcal{G}$ is unitary.
\item The C$^*$-Galois isometry $G$ is unitary.
\end{thmenum}
\end{Prop}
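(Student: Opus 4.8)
The plan is to treat each spectral component separately. Since the $\mathcal{A}_\pi$ are mutually orthogonal and complemented in $\mathcal{A}$ and the $\mathscr{L}^2(\mathbb{G})_\pi$ are mutually orthogonal in $\mathscr{L}^2(\mathbb{G})$, the two isometries split as $G=\bigoplus_\pi G_\pi$ and $\mathcal{G}=\bigoplus_\pi\mathcal{G}_\pi$ with $G_\pi\colon\mathcal{A}_\pi\otimes_B A\to A\otimes\mathscr{L}^2(\mathbb{G})_\pi$ and $\mathcal{G}_\pi\colon\mathcal{A}_\pi\otimes_B\mathcal{A}\to\mathcal{A}\otimes\mathscr{L}^2(\mathbb{G})_\pi$; being isometries with complete domains, $G$ (resp.\ $\mathcal{G}$) is unitary iff each $G_\pi$ (resp.\ $\mathcal{G}_\pi$) has dense range. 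The key observation is that, $C(\mathbb{G})_\pi$ being finite dimensional and $\varphi$ being faithful on $P(\mathbb{G})\supseteq C(\mathbb{G})_\pi$, the map $\iota\otimes\Lambda_{\varphi}$ restricts to an isomorphism of Banach spaces $A\otimes C(\mathbb{G})_\pi\xrightarrow{\ \sim\ }A\otimes\mathscr{L}^2(\mathbb{G})_\pi$, the domain carrying the C$^*$-norm and the codomain the Hilbert module norm. Recalling that $\alpha(A_\pi)\subseteq A\otimes C(\mathbb{G})_\pi$ and that $G_\pi$ sends the dense image of $A_\pi\algtensor P(A)$ in its domain onto $(\iota\otimes\Lambda_{\varphi})(\alpha(A_\pi)(P(A)\otimes 1))$, this isomorphism identifies ``$G_\pi$ has dense range'' with the purely C$^*$-algebraic statement $\lbrack\alpha(A_\pi)(A\otimes 1)\rbrack=A\otimes C(\mathbb{G})_\pi$; write $(\star)$ for the conjunction of these over all $\pi\in\Irr\mathbb{G}$.

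To prove $(1)\Leftrightarrow(3)$ it then suffices to show that $(\star)$ is equivalent to the Ellwood condition. The implication $(\star)\Rightarrow(1)$ is immediate: taking closed linear spans, $\lbrack\alpha(A)(A\otimes 1)\rbrack$ contains each $A\otimes C(\mathbb{G})_\pi$, hence the C$^*$-closure of $A\algtensor P(\mathbb{G})$, which is $A\omin C(\mathbb{G})$. For $(1)\Rightarrow(\star)$, I apply to the identity $\lbrack\alpha(A)(A\otimes 1)\rbrack=A\omin C(\mathbb{G})$ the bounded idempotent $\iota\otimes E_\pi$ of $A\omin C(\mathbb{G})$ onto $A\otimes C(\mathbb{G})_\pi$, where $E_\pi\colon C(\mathbb{G})\to C(\mathbb{G})$, $c\mapsto(\iota\otimes\omega_\pi)\Delta(c)$, is the spectral projection; its complete boundedness, a by-product of Lemma~\ref{LemBoun}, makes $\iota\otimes E_\pi$ bounded. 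Since $\alpha(A_\rho)\subseteq A\otimes C(\mathbb{G})_\rho$, the operator $\iota\otimes E_\pi$ annihilates $\alpha(A_\rho)(A\otimes 1)$ for $\rho\ncong\pi$ and fixes $\alpha(A_\pi)(A\otimes 1)$, so it carries the dense subset $\alpha(P(A))(P(A)\otimes 1)$ onto $\alpha(A_\pi)(P(A)\otimes 1)$, forcing the latter to be C$^*$-dense in $A\otimes C(\mathbb{G})_\pi$. This is $(\star)$.

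It remains to relate $(2)$ and $(3)$, which I would do with the results of Section~\ref{adjt} rather than with a further density argument. If one of the localized Galois maps is unitary it is in particular adjointable, so by Proposition~\ref{PropAd} all of $G_\pi$, $\mathcal{G}_\pi$, $\mathscr{G}_\pi$ are adjointable, and Proposition~\ref{PropComp} then gives $G_\pi G_\pi^*=\mathcal{G}_\pi\mathcal{G}_\pi^*=\mathscr{G}_\pi\mathscr{G}_\pi^*$; an isometry being unitary exactly when this range projection is $1$, we conclude that $G$ is unitary iff $\mathcal{G}$ is. (Independently, $(3)\Rightarrow(2)$ follows at once from the identity $\mathcal{G}_\pi\circ(\iota\otimes_B\Lambda_{\mathcal{A}})=(\Lambda_{\mathcal{A}}\otimes\iota)\circ G_\pi$ together with the density of the range of $\Lambda_{\mathcal{A}}$.) Combined with the previous paragraph this yields $(1)\Leftrightarrow(2)\Leftrightarrow(3)$.

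The step I expect to be delicate is the passage from unitarity of a Galois map back to the Ellwood condition, since unitarity is a statement about the \emph{completed} Hilbert-module pictures whereas Ellwood is a C$^*$-norm density. For $G$ the gap is bridged by the finiteness of $\dim C(\mathbb{G})_\pi$, which promotes Hilbert-module density on each spectral slice to C$^*$-density there; for $\mathcal{G}$ there is no such direct promotion, since $\Lambda_{\mathcal{A}}$ need not be bounded below, and one genuinely needs Propositions~\ref{PropAd} and~\ref{PropComp} to transfer the conclusion to $G$. A minor point to check along the way is that the auxiliary results borrowed from Sections~\ref{SecAc} and~\ref{adjt} do not require $A$ to be unital.
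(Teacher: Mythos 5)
Your proof is correct, and it is worth comparing with the paper's, which arranges the equivalences as a cycle $(1)\Rightarrow(2)\Rightarrow(3)\Rightarrow(1)$: freeness gives unitarity of $\mathcal{G}$ at once because $\Lambda_{\mathcal{A}}\otimes\Lambda_{\varphi}$ is contractive with dense range, then Proposition~\ref{PropComp} transfers unitarity from $\mathcal{G}_\pi$ to $G_\pi$, and finally $(3)\Rightarrow(1)$ uses the spectral slices exactly as you do. The genuine difference is your direct proof of $(1)\Rightarrow(3)$ via the bounded idempotent $\iota\otimes E_\pi$ of $A\omin C(\mathbb{G})$ onto $A\otimes C(\mathbb{G})_\pi$ combined with the Banach-space identification of $A\otimes C(\mathbb{G})_\pi$ with $A\otimes\mathscr{L}^2(\mathbb{G})_\pi$: in the paper this implication is only obtained by the detour through $(2)$, so it secretly depends on the whole adjointability machinery of Section~\ref{adjt}, whereas your route makes $(1)\Leftrightarrow(3)$ self-contained (resting only on Lemma~\ref{LemBoun}-type boundedness of the spectral projections and finite-dimensionality of the slices) and isolates Propositions~\ref{PropAd} and~\ref{PropComp} as being needed solely to bring $(2)$ into the loop. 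Your diagnosis of where the difficulty sits is also accurate: the passage from unitarity of $\mathcal{G}$ back to a C$^*$-norm density statement cannot be done slice-by-slice on the range side, since the first leg of $\mathcal{A}\otimes\mathscr{L}^2(\mathbb{G})_\pi$ is the full completion $\mathcal{A}$, and this is precisely what Proposition~\ref{PropComp} is for. On your final caveat: Proposition~\ref{PropAd} is stated in the paper for unital $A$, but its proof (and that of Proposition~\ref{PropComp}) does not use unitality, and the paper itself invokes this machinery in the non-unital setting when deducing Theorem~\ref{ThmFreeChar}, so your usage is consistent with the paper's own.
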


\begin{proof} Assume the action is free. Since the natural map $(\Lambda_{\mathcal{A}}\otimes \Lambda_{\varphi})$ from $A\omin C(\mathbb{G})$ to $\mathcal{A}\otimes
\mathscr{L}^2(\mathbb{G})$ is contractive, we see that the image of $\mathcal{G}$ is dense in its range. As $\mathcal{G}$ is isometric, it then follows that it is bijective, hence
unitary (cf.~\cite[Theorem~3.5]{Lan1}).

Let us assume that $\mathcal{G}$ is unitary. Then all $\mathcal{G}_{\pi}$ are unitary operators. Proposition~\ref{PropComp} implies that all $G_{\pi}$ are unitary operators as well.
It follows that $G$ is a unitary.

Finally, let us assume $G$ is unitary. Then we have $\lbrack \alpha(A_{\pi})(A\otimes 1)\rbrack = A\otimes C(\mathbb{G})_{\pi}$ for each representation $\pi$ of $\mathbb{G}$. As
$P(A)$ is dense in $A$, it follows that the action of $\mathbb{G}$ on $A$ is free.
\end{proof}

\begin{Cor}[Theorem~\ref{TheoProj}]\label{CorFin} Let $A$ be a unital C$^*$-algebra endowed with a free action of $\mathbb{G}$, and let $\pi$ be a finite dimensional unitary
representation of $\mathbb{G}$.
\begin{thmenum}
\item The space $A_{\pi}$ is a finitely generated right Hilbert $B$-module (and hence a finitely generated and projective right $B$-module).
\item The space $A\smbox \mathscr{H}_{\pi}$ is finitely a generated right Hilbert $B$-module (hence finitely generated and projective as a right $B$-module).
\end{thmenum}
\end{Cor}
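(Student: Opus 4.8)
The plan is to obtain Corollary~\ref{CorFin} as an immediate consequence of Proposition~\ref{PropFree} and the unital case of Theorem~\ref{ThmAdjProj}, with only a small amount of bookkeeping in between. First I would use freeness to get, via Proposition~\ref{PropFree}, that the total C$^*$-Galois isometry $G = \bigoplus_{\rho \in \Irr\mathbb{G}} G_\rho$ is unitary; since the summands are orthogonal this forces each $G_\rho$ (for $\rho$ irreducible) to be unitary. For a general finite dimensional unitary representation $\pi$, the isotypical component $A_\pi$ and the space $\mathscr{L}^2(\mathbb{G})_\pi$ depend only on the set of irreducible constituents occurring in $\pi$, so $G_\pi$ decomposes as a finite orthogonal direct sum of the $G_\rho$ over those constituents and is therefore itself unitary, in particular adjointable. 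By Proposition~\ref{PropAd}, adjointability of one of the $\pi$-localized Galois maps already yields adjointability of all of them, so condition~(1) of Theorem~\ref{ThmAdjProj} holds.

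Now, since $A$ is unital, Theorem~\ref{ThmAdjProj} tells us that this is equivalent to conditions~(4) and~(4'), i.e.\ that $\mathcal{A}_\pi$ and $\mathcal{A}\smbox\mathscr{H}_\pi$ are finitely generated projective right $B$-modules; in fact the argument there produces an isomorphism $\mathcal{A}_\pi \cong pB^n$ with $p$ a projection in $M_n(B)$, so each is simultaneously a finitely generated Hilbert $B$-module and a finitely generated projective $B$-module. It then only remains to translate back: by the conventions of Section~\ref{SecAc}, $\mathcal{A}_\pi$ is precisely $A_\pi$ equipped with its right Hilbert $B$-module structure, and $A\smbox\mathscr{H}_\pi$ carries its Hilbert $B$-module structure from Corollary~\ref{CorPropHilb}(3); so statements~(1) and~(2) of the corollary are exactly~(4) and~(4'). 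The parenthetical remarks — that a finitely generated Hilbert module over the unital algebra $B$ is finitely generated projective as a module — are covered by the same reasoning, resting on \cite[Lemma~6.5]{Kas1} and Kasparov's stabilization theorem.

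I do not expect any genuine obstacle here: all the hard work lies in Section~\ref{adjt}, and the corollary is just a matter of feeding ``freeness $\Rightarrow$ $G$ unitary'' into the unital case of Theorem~\ref{ThmAdjProj}. The only point needing a line of care is the reduction from the unitarity of the global map $G$ to the adjointability of each individual $G_\pi$ for possibly reducible $\pi$, which is the isotypical-direct-sum bookkeeping indicated above; alternatively one can invoke Lemma~\ref{LemMult} to pass between $\pi$ and its irreducible subrepresentations, exactly as is done at the end of the proof of Theorem~\ref{ThmAdjProj}.
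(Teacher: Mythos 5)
Your proposal is correct and follows exactly the paper's route: the paper's own proof is the one-line "This follows from Theorem~\ref{ThmAdjProj} and the previous proposition," i.e.\ freeness gives unitarity (hence adjointability) of the Galois maps via Proposition~\ref{PropFree}, and the unital case of Theorem~\ref{ThmAdjProj} converts this into finite generation and projectivity of $\mathcal{A}_\pi$ and $A \smbox \mathscr{H}_\pi$. The extra bookkeeping you supply for reducible $\pi$ is exactly what the paper delegates to Lemma~\ref{LemMult} inside the proof of Theorem~\ref{ThmAdjProj}, so there is no divergence.
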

\begin{proof} This follows from Theorem~\ref{ThmAdjProj} and the previous proposition.
\end{proof}

Following the case of group actions, we make the following definition of saturatedness.

\begin{Def} We say that $\alpha$ is \emph{saturated} if $\lbrack (A\rtimes \mathbb{G})_{\mathrm{triv}}\cdot(A\rtimes \mathbb{G})_{\mathrm{triv}}^* \rbrack = A\rtimes
\mathbb{G}$.\end{Def}

Note that if $A$ is unital, this simply says that $1\otimes \widehat{\chi}_{\mathrm{triv}}$ is a full projection in $A\rtimes \mathbb{G}$.  In general, this condition says that $(A
\rtimes \mathbb{G})_{\mathrm{triv}}$ is an imprimitivity bimodule between $B$ and $A \rtimes \mathbb{G}$.

We now prove the equivalence between freeness and saturatedness.

\begin{Theorem}[Theorem~\ref{ThmFreeChar}] A compact quantum group action of $\mathbb{G}$ on a (not necessarily unital) C$^*$-algebra $A$ is saturated if and only if it is
free.\end{Theorem}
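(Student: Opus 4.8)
The plan is to prove the two implications separately, using the crossed‐product realizations of the Hilbert modules $\mathcal{A}$ and $\mathscr{A}$ developed in Section~\ref{SecAc}, together with the machinery of $\pi$-localized Galois maps from Section~\ref{adjt}. The key dictionary, already established, is Lemma~\ref{LemIdeal} and the discussion following it: the trivial corner $(A\rtimes \mathbb{G})_{\mathrm{triv}}$ is isomorphic to $\mathcal{A}$ as a right Hilbert $B$-module, while $\,\!_{\mathrm{triv}}(A\rtimes \mathbb{G})$ is isomorphic to $\mathscr{A}$, and under these identifications the map $\Pi_\alpha\colon \mathcal{K}(\mathcal{A})\to A\rtimes \mathbb{G}$ of~\eqref{EqPiAlphaDef} sends $\Lambda_{\mathcal{A}}(x)\Lambda_{\mathcal{A}}(y)^*$ to $\alpha(x)(1\otimes \widehat{\chi}_{\mathrm{triv}})\alpha(y^*)$. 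Therefore the saturatedness condition $[(A\rtimes\mathbb{G})_{\mathrm{triv}}\cdot(A\rtimes\mathbb{G})_{\mathrm{triv}}^*]=A\rtimes\mathbb{G}$ is literally the statement that the ideal $\Pi_\alpha(\mathcal{K}(\mathcal{A}))$ is all of $A\rtimes\mathbb{G}$, equivalently that $\pi_{\mathrm{red}}$ (which splits $\Pi_\alpha$) is faithful and $\Pi_\alpha$ is an isomorphism onto $A\rtimes\mathbb{G}$.

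For the implication \emph{free $\Rightarrow$ saturated}: by Proposition~\ref{PropFree}, freeness gives that each $\pi$-localized Galois isometry $G_\pi$, hence each $\mathcal{G}_\pi$, hence (by Proposition~\ref{PropComp}) each $\mathscr{G}_\pi$, is a \emph{unitary}. In particular all the $\pi$-localized Galois maps are adjointable, so Theorem~\ref{ThmAdjProj} applies and $\pi_{\mathrm{red}}(\,\!_\pi(A\rtimes\mathbb{G}))\subseteq \mathcal{K}(\mathcal{A},\mathcal{A}_\pi)$ for every $\pi$. Summing over $\pi\in\Irr\mathbb{G}$ and using that $A\rtimes\mathbb{G}$ is densely spanned by the corners $\,\!_\pi(A\rtimes\mathbb{G})_\rho$, one gets $\pi_{\mathrm{red}}(A\rtimes\mathbb{G})\subseteq\mathcal{K}(\mathcal{A})$. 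Since $\pi_{\mathrm{red}}\circ\Pi_\alpha=\iota$ on $\mathcal{K}(\mathcal{A})$ and $\Pi_\alpha\circ\pi_{\mathrm{red}}$ is then a $*$-homomorphism on $A\rtimes\mathbb{G}$ which one checks is the identity on the dense $*$-subalgebra $\alpha(P(A))(1\otimes\widehat{P(\mathbb{G})})$ using that $\mathscr{G}_\pi$ is surjective, we conclude $\Pi_\alpha$ is onto; concretely the surjectivity of the Galois maps is exactly what forces $\alpha(A_\pi)$-type elements times $(1\otimes\widehat{\chi}_{\mathrm{triv}})$ to recover all of $\,\!_\pi(A\rtimes\mathbb{G})$, which re-sums to saturatedness. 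Thus $[(A\rtimes\mathbb{G})_{\mathrm{triv}}\cdot(A\rtimes\mathbb{G})_{\mathrm{triv}}^*]=\Pi_\alpha(\mathcal{K}(\mathcal{A}))=A\rtimes\mathbb{G}$.

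For the converse \emph{saturated $\Rightarrow$ free}: assuming $\Pi_\alpha(\mathcal{K}(\mathcal{A}))=A\rtimes\mathbb{G}$, cutting down by the central projections $1\otimes\widehat{\chi}_\pi$ shows $\pi_{\mathrm{red}}$ maps $\,\!_\pi(A\rtimes\mathbb{G})$ into $\mathcal{K}(\mathcal{A},\mathcal{A}_\pi)$ for each $\pi$, so condition (2) of Theorem~\ref{ThmAdjProj} holds and all $\pi$-localized Galois maps are adjointable. Then by Proposition~\ref{PropComp} the range projection $P_\pi=\mathcal{G}_\pi\mathcal{G}_\pi^*=\mathscr{G}_\pi\mathscr{G}_\pi^*$ is a central multiplier of $\,\!_\pi(A\rtimes\mathbb{G})_\pi$, and identifying $\mathscr{G}_\pi$ with $\Pi_\alpha$ shows $P_\pi$ is the unit of $\,\!_\pi(A\rtimes\mathbb{G})_\pi$ precisely when $\Pi_\alpha$ restricted to that corner is surjective — which is what saturatedness gives after re-summing. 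Hence $\mathscr{G}_\pi$, and therefore $\mathcal{G}_\pi$ and $G_\pi$, are surjective isometries, i.e.\ unitaries, for all $\pi$; summing gives $G$ unitary, and Proposition~\ref{PropFree} yields freeness.

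The main obstacle, and the step requiring the most care, is the bookkeeping that translates the single global statement $[(A\rtimes\mathbb{G})_{\mathrm{triv}}\cdot(A\rtimes\mathbb{G})_{\mathrm{triv}}^*]=A\rtimes\mathbb{G}$ into the family of $\pi$-by-$\pi$ statements about the corners $\,\!_\pi(A\rtimes\mathbb{G})_\rho$ and their realization via $\Pi_\alpha$ and $\pi_{\mathrm{red}}$; one must be careful that the relevant ideals and modules are norm-closed, that the decomposition of $A\rtimes\mathbb{G}$ over $\Irr\mathbb{G}$ behaves well (the central projections $1\otimes\widehat{\chi}_\pi$ do this), and that no unitality of $A$ is secretly used — in particular one should invoke the non-unital form of~\cite[Theorem~1.5]{Pod1} and the fact that $\Pi_\alpha$ is generally degenerate, so all identifications take place at the level of ideals of multiplier algebras rather than unital algebras.
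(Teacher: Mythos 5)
Your proposal is correct and takes essentially the same route as the paper's own proof: both translate saturatedness into the statement that $\Pi_\alpha(\mathcal{K}(\mathcal{A},\mathcal{A}_\pi))={}_{\pi}(A\rtimes\mathbb{G})$ for every $\pi$, i.e.\ that each crossed-product Galois isometry $\mathscr{G}_\pi$ is unitary, and then shuttle unitarity between $\mathscr{G}_\pi$, $\mathcal{G}_\pi$ and $G_\pi$ via Proposition~\ref{PropComp} before invoking Proposition~\ref{PropFree}. The only slip is calling the projections $1\otimes\widehat{\chi}_\pi$ central in $A\rtimes\mathbb{G}$ (they are central only in $C^*(\mathbb{G})$), but your corner-cutting argument does not actually use centrality, so nothing is lost.
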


\begin{proof} Observe first that the saturatedness condition is equivalent to that $\Pi_{\alpha}(\mathcal{K}(\mathcal{A},\mathcal{A}_{\pi}))$ being equal to $\,\!_{\pi}(A\rtimes
\mathbb{G})$ for each representation $\pi$ of $\mathbb{G}$. But this in turn is equivalent with all maps $\mathscr{G}_{\pi}$ having dense range, i.e.~ being unitaries. The theorem
then follows from Proposition~\ref{PropComp} and Proposition~\ref{PropFree}.
\end{proof}

\section{Finite index inclusions of C$^*$-algebras}\label{indx}

Let $B\subseteq C$ be a unital inclusion of unital C$^*$-algebras. Following~\cite{Wat1}, we call this an \emph{inclusion of finite-index type} if there exists a conditional
expectation $E\colon C\rightarrow B$ and a finite set of pairs $(v_i,w_i)\in C\times C$, called a \emph{quasi-basis for} $E$, such that
\[
\sum_i v_iE(w_ix)=x = \sum_i E(xv_i)w_i \quad (x \in C).
\]
Such a conditional expectation is itself called a conditional expectation of finite-index type.  Then, \textit{the index of $E$} is defined as
\begin{equation}\label{EqIndexCondExp}
\Index(E) = \sum_i v_iw_i
\end{equation}
This does not depend on the choice of quasi-basis, and belongs to the center of $C$.

An equivalent characterisation of a conditional expectation of finite-index type is the following.

\begin{Lem}\label{LemHilb} Let $B\subseteq C$ be as above, and $E\colon C\rightarrow B$ a conditional expectation. Then $E$ is of finite-index type if and only if the right $B$-module
$C$, together with the $B$-valued inner product $\langle x,y\rangle_B = E(x^*y)$, is a finitely generated right Hilbert $B$-module.
\end{Lem}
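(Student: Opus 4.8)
The plan is to prove both implications separately, the key observation being that a quasi-basis gives an explicit frame for the Hilbert module structure and vice versa. First, for the forward direction, suppose $E$ is of finite-index type with quasi-basis $\{(v_i, w_i)\}$. I would consider the $B$-valued inner product $\langle x, y \rangle_B = E(x^*y)$ on $C$; one first checks, as in the discussion preceding Corollary \ref{CorPropHilb}, that $C$ is complete in the associated norm (since $E$ is faithful and of finite index, the Hilbert module norm is equivalent to the C$^*$-norm on $C$, by the Pimsner--Popa type estimate implicit in the existence of a quasi-basis; alternatively one argues directly). Then the reconstruction formula $x = \sum_i v_i E(w_i x)$ can be rewritten, after passing to adjoints in the second quasi-basis identity, as $x = \sum_i v_i \langle v_i^*, x\rangle_B$ — so the finite family $\{v_i\}$ generates $C$ as a right $B$-module, hence $C$ is finitely generated.

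For the converse, suppose $(C, \langle\,\cdot\,,\,\cdot\,\rangle_B)$ is a finitely generated right Hilbert $B$-module. Since $B$ is unital, the identity operator on $C$ is then a finite-rank (hence compact) adjointable endomorphism, so there exist finitely many $\xi_i, \eta_i \in C$ with $\iota_C = \sum_i \theta_{\xi_i, \eta_i}$, where $\theta_{\xi,\eta}(\zeta) = \xi \langle \eta, \zeta\rangle_B$; equivalently, $\zeta = \sum_i \xi_i E(\eta_i^* \zeta)$ for all $\zeta \in C$. Setting $v_i = \xi_i$ and $w_i = \eta_i^*$ gives one of the two quasi-basis identities immediately. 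For the other, I would apply the $*$-operation: from $\zeta = \sum_i \xi_i E(\eta_i^*\zeta)$ applied to $\zeta^*$ and then taking adjoints one gets $\zeta = \sum_i E(\zeta \xi_i) \eta_i^*$; combining and possibly symmetrizing the family (replacing $\{(v_i,w_i)\}$ by $\{(v_i, w_i)\} \cup \{(w_i^*, v_i^*)\}$, or averaging) yields a genuine two-sided quasi-basis. So $E$ is of finite-index type.

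The main obstacle I anticipate is the completeness issue in the forward direction: a priori one is only given an abstract conditional expectation $E$ and the algebraic quasi-basis relations, and one must know that $(C, \langle\,\cdot\,,\,\cdot\,\rangle_B)$ is actually a Hilbert module (i.e.\ complete), not merely a pre-Hilbert module. This follows because the quasi-basis relations force $\|x\|^2 \le \|\sum_i v_i v_i^*\| \, \|E(x^*x)\|$ by Cauchy--Schwarz applied to $x = \sum_i v_i E(w_i x)$, giving equivalence of the two norms and hence completeness of $C$ in the Hilbert module norm; the reverse inequality $\|E(x^*x)\| \le \|x\|^2$ is automatic since $E$ is contractive. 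A secondary, purely bookkeeping point is ensuring the two quasi-basis identities hold simultaneously for a single finite family in the converse direction, which the symmetrization trick above handles cleanly.
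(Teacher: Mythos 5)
Your overall strategy is sound and is essentially a self-contained version of what the paper does by citation: the paper's proof of this lemma consists of invoking Watatani's memoir twice --- \cite[Corollary 3.1.4]{Wat1} for the direction ``finitely generated Hilbert module $\Rightarrow$ finite-index type'', and \cite[Proposition 2.1.5]{Wat1} for the Pimsner--Popa estimate $\|x^*x\|\le c\|E(x^*x)\|$, which gives the norm equivalence and hence completeness in the other direction. Your forward direction reproduces exactly that estimate (the constant should be $\|\sum_i v_iv_i^*\|\,\|\sum_i w_i^*w_i\|$ rather than $\|\sum_i v_iv_i^*\|$, and the generating identity is $x=\sum_i v_i\langle w_i^*,x\rangle_B$, not $\langle v_i^*,x\rangle_B$; both are cosmetic), so that half is fine.

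The one genuine misstep is the ``symmetrization'' at the end of the converse. The union $\{(v_i,w_i)\}\cup\{(w_i^*,v_i^*)\}$ does not work: since $\iota_C=\sum_i\theta_{\xi_i,\eta_i}$ is self-adjoint, we also have $\iota_C=\sum_i\theta_{\eta_i,\xi_i}$, i.e.\ the pairs $(w_i^*,v_i^*)=(\eta_i,\xi_i^*)$ satisfy the \emph{same} left reconstruction identity, so summing over the union returns $2x$ rather than $x$; and ``averaging'' then still requires checking the right-hand identity, so it is not a clean fix. Fortunately no symmetrization is needed: from $\iota_C=\sum_i\theta_{\eta_i,\xi_i}$ one gets $\zeta=\sum_i w_i^*E(v_i^*\zeta)$ for all $\zeta$; applying this to $\zeta=x^*$ and taking adjoints in $C$ gives $x=\sum_i E(xv_i)w_i$. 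Thus the single family $(v_i,w_i)=(\xi_i,\eta_i^*)$ already satisfies \emph{both} quasi-basis identities, the second being the $*$-conjugate of the adjoint of the first. Separately, your claim that finite generation forces $\iota_C$ to be a finite-rank adjointable operator is true but not free: it rests on the fact that a finitely generated Hilbert module over a unital C$^*$-algebra is an orthogonal direct summand of some $B^n$ (which is how the paper itself reaches this situation, via Kasparov stabilization in the proof of Theorem~\ref{ThmAdjProj}); you should either cite this or take $C\cong pB^n$ as the meaning of ``finitely generated'' and use the frame $\xi_i=pe_i$ directly.
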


\begin{proof} If $C$ is a finitely generated right Hilbert $B$-module in the way prescribed above, it follows from~\cite[Corollary 3.1.4]{Wat1} and the remark following it that $E$ is
of finite-index type.

Conversely, assume that $E$ is of finite-index type. We know that $C$ is finitely generated as a right $B$-module, and, by~\cite[Proposition 2.1.5]{Wat1}, that there exists a constant
$c>0$ such that \[ \|E(x^*x)\| \leq \|x^*x\| \leq c \|E(x^*x)\|.\] Hence $C$ with its $B$-valued inner product is complete, and becomes a finitely generated right Hilbert $B$-module.
This concludes the proof.
\end{proof}

Assume now again that $A$ is a unital C$^*$-algebra with an action by a compact quantum group $\mathbb{G}$, with $B$ denoting the C$^*$-algebra of $\mathbb{G}$-fixed elements. Let us
fix a representation $\pi$ of $\mathbb{G}$ with a fixed orthogonal basis $\{e_i\}$ for $\mathscr{H}_{\pi}$, and write $\delta_{\pi}(e_i) = \sum_{j} u_{ij}\otimes e_j$ with $u_{ij}\in
C(\mathbb{G})$.

On the one hand, we can twist the coaction $\alpha$ with the representation $\pi$ to obtain the coaction \[\alpha_{\pi}\colon A\otimes B(\mathscr{H}_{\pi}) \rightarrow (A\otimes
B(\mathscr{H}_{\pi}))\omin C(\mathbb{G}),\] which, using the Sweedler notation $\alpha(a) = a_{(0)}\otimes a_{(1)}$, is given by the formula \[\alpha_{\pi}(a\otimes e_ie_j^*) =
\sum_{k,l} a_{(0)}\otimes e_ke_l^* \otimes u_{ki}^*a_{(1)}u_{lj}.\]

On the other hand, we can also consider the following \emph{left} coaction of $C(\mathbb{G})$ on $B(\mathscr{H}_{\pi})$: \[\Ad_{\pi}\colon B(\mathscr{H}_{\pi})\rightarrow
C(\mathbb{G})\otimes B(\mathscr{H}_{\pi})\colon e_ie_j^* \rightarrow \sum_{k,l} u_{ik}u_{jl}^* \otimes e_ke_l^*.\]
We let $A\smbox B(\mathscr{H}_{\pi})$ denote the space $\{ x \in A \otimes B(\mathscr{H}_\pi) \colon (\alpha \otimes \iota)(x) = (\iota \otimes \Ad_{\pi})(x) \}$.

The following lemma follows from a straightforward computation, which we omit.

\begin{Lem} The C$^*$-subalgebras $(A\otimes B(\mathscr{H}_{\pi}))^{\mathbb{G}}$ and $A\smbox B(\mathscr{H}_{\pi})$ of $A\otimes B(\mathscr{H}_{\pi})$ coincide.
\end{Lem}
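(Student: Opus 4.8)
The plan is to show that both $(A\otimes B(\mathscr{H}_{\pi}))^{\mathbb{G}}$ and $A\smbox B(\mathscr{H}_{\pi})$ consist of exactly the same elements of $A\otimes B(\mathscr{H}_{\pi})$ by unwinding the two defining equations on matrix-coefficient level and observing they are identical. Concretely, an element $x\in A\otimes B(\mathscr{H}_{\pi})$ lies in the fixed point algebra of $\alpha_{\pi}$ iff $\alpha_{\pi}(x)=x\otimes 1$, while it lies in $A\smbox B(\mathscr{H}_{\pi})$ iff $(\alpha\otimes\iota)(x)=(\iota\otimes\Ad_{\pi})(x)$. First I would write a general element as $x=\sum_{i,j}x_{ij}\otimes e_ie_j^*$ with $x_{ij}\in A$, plug into the formula for $\alpha_{\pi}$ given just above the lemma, and rearrange using the coassociativity and the unitarity relations $\sum_k u_{ik}^*u_{jk}=\delta_{ij}$, $\sum_k u_{ki}u_{kj}^*=\delta_{ij}$ recorded in Section~\ref{SubSecCQG}.

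The key computation is the following. From $\alpha_{\pi}(a\otimes e_ie_j^*)=\sum_{k,l}a_{(0)}\otimes e_ke_l^*\otimes u_{ki}^*a_{(1)}u_{lj}$, the condition $\alpha_{\pi}(x)=x\otimes 1$ reads, after comparing the $e_ke_l^*$-components,
\[
\sum_{i,j}(x_{ij})_{(0)}\otimes u_{ki}^*(x_{ij})_{(1)}u_{lj}=x_{kl}\otimes 1\qquad(\text{all }k,l).
\]
On the other side, $(\alpha\otimes\iota)(x)=\sum_{i,j}\alpha(x_{ij})\otimes e_ie_j^*=\sum_{i,j}(x_{ij})_{(0)}\otimes(x_{ij})_{(1)}\otimes e_ie_j^*$ and $(\iota\otimes\Ad_{\pi})(x)=\sum_{i,j}x_{ij}\otimes\Ad_{\pi}(e_ie_j^*)=\sum_{i,j,k,l}x_{ij}\otimes u_{ik}u_{jl}^*\otimes e_ke_l^*$; equating the $e_ke_l^*$-components gives
\[
\alpha(x_{kl})=\sum_{i,j}x_{ij}\otimes u_{ik}u_{jl}^*\qquad(\text{all }k,l).
\]
I would then show these two families of equations are equivalent: multiplying the first through by $u_{k'k}$ on the left (in the $C(\mathbb{G})$-leg) and $u_{l'l}^*$ on the right and summing over $k,l$, and using the unitarity relations to collapse $\sum_k u_{k'k}u_{ki}^*=\delta_{k'i}$ and $\sum_l u_{l'l}u_{lj}^*=\delta_{l'j}$ in one direction, and conversely multiplying the second by $u_{ki}^*$ and $u_{lj}$ and summing, produces exactly the first. (One should be slightly careful about which of the two unitarity identities applies where, since $u$ and $\bar u$ play dual roles; both identities from Section~\ref{SubSecCQG} are needed.) The stated C$^*$-subalgebra identification then follows, since both sides are $^*$-subalgebras of $A\otimes B(\mathscr{H}_{\pi})$ and we have shown they have the same underlying set.

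The main obstacle is purely bookkeeping: keeping the index positions and the $^*$'s straight when passing between the $\alpha_{\pi}$-picture (where $u^*$ appears on the left and $u$ on the right) and the $\Ad_{\pi}$-picture (where $u$ appears on the left and $u^*$ on the right), and invoking the correct one of $\sum_k u_{ik}^*u_{jk}=\delta_{ij}$ versus $\sum_k u_{ki}u_{kj}^*=\delta_{ij}$ at each collapse. There is no conceptual difficulty — it is a change of variables hidden in a unitary conjugation — which is why the paper relegates it to "a straightforward computation, which we omit"; I would do the same after recording the two displayed systems of equations above and noting their equivalence via the unitarity relations.
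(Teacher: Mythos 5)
Your overall strategy is exactly the computation the paper suppresses: the proof is omitted there, and expanding $x=\sum_{i,j}x_{ij}\otimes e_ie_j^*$, extracting the two families of matrix-coefficient equations, and observing that they are intertwined by the unitarity of the corepresentation matrix is the intended argument. Both of your displayed systems are correctly derived, and they are indeed equivalent: writing $U$ for the genuinely unitary matrix with entries $U_{ij}=u_{ji}$, the fixed-point condition says that $(1\otimes U)^*\,Z\,(1\otimes U)=X\otimes 1$ in $M_n(A\omin C(\mathbb{G}))$, where $Z=(\alpha(x_{ij}))_{ij}$ and $X=(x_{ij})_{ij}$, while the $A\smbox B(\mathscr{H}_\pi)$-condition says $Z=(1\otimes U)(X\otimes 1)(1\otimes U)^*$; these are equivalent because $U^*U=UU^*=1$, which is precisely the pair of relations recorded in Section~\ref{SubSecCQG}.

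However, the collapse identities you actually wrote down for the direction from the fixed-point equation to the $\smbox$-equation are false as stated, and this is not just bookkeeping: $\sum_k u_{k'k}u_{ki}^*$ (summing over the second index of the first factor against the first index of the second) is not one of the two recorded unitarity relations, and it does not equal $\delta_{k'i}$ for a general compact quantum group --- it amounts to asking that the contragredient matrix $\bar u$ be unitary, which holds only in the Kac case and fails already for the fundamental representation of $SU_q(2)$. The correct manipulation is to multiply the first system on the left by $u_{kp}$ and on the right by $u_{lq}^*$ and sum over $k$ and $l$; the collapses are then $\sum_k u_{kp}u_{ki}^*=\delta_{pi}$ and $\sum_l u_{lj}u_{lq}^*=\delta_{jq}$, both instances of the recorded relation $\sum_k u_{ki}u_{kj}^*=\delta_{ij}$, and one obtains $\alpha(x_{pq})=\sum_{k,l}x_{kl}\otimes u_{kp}u_{lq}^*$, which is the second system. (Your converse direction --- substituting the second system into the left-hand side of the first and applying $\sum_k u_{ik}^*u_{jk}=\delta_{ij}$ twice --- is fine.) So: right approach and correct target equations, but the one identity carrying the key step must be replaced as above before the argument is valid beyond the Kac case.
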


\begin{Theorem}[Theorem~\ref{ThmFinIndIncl}, first part]\label{TheoFinIn} Let $A$ be a unital C$^*$-algebra, endowed with a free action of $\mathbb{G}$. Then the inclusion $B =
A^{\mathbb{G}} \subseteq (A\otimes B(\mathscr{H}_{\pi}))^{\mathbb{G}}$ is a finite-index type inclusion.\end{Theorem}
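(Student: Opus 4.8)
The plan is to produce, via Lemma~\ref{LemHilb}, a conditional expectation $E\colon C\to B$ of finite-index type, where $C=(A\otimes B(\mathscr{H}_{\pi}))^{\mathbb{G}}$; the key realisation is that $C$, as a right $B$-module, has \emph{already} been analysed in Theorem~\ref{TheoProj}. So I would reduce, through Lemma~\ref{LemHilb}, to the statement that $C$ equipped with the inner product $\langle x,y\rangle_{E}=E(x^{*}y)$ is a finitely generated right Hilbert $B$-module, and obtain finite generation and completeness from earlier results.

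First I would identify $C$. By the preceding lemma, $C=A\smbox B(\mathscr{H}_{\pi})$, where $B(\mathscr{H}_{\pi})$ carries the adjoint corepresentation $\Ad_{\pi}$. Now $\Ad_{\pi}$ is unitarily equivalent to $\pi\times\overline{\pi}$, hence is a finite dimensional unitary representation of $\mathbb{G}$ once $B(\mathscr{H}_{\pi})$ is given the Hilbert space structure for which $\Ad_{\pi}$ is a unitary corepresentation. With that structure, Corollary~\ref{CorPropHilb}(3) applies and shows that $C=A\smbox B(\mathscr{H}_{\pi})$ is complete for its canonical $B$-valued inner product $\langle x,y\rangle_{B}$ (the one pairing the $B(\mathscr{H}_{\pi})$-legs through this Hilbert space structure), and Corollary~\ref{CorFin}, i.e.\ Theorem~\ref{TheoProj} --- this is where unitality of $A$ and freeness of the action enter --- shows moreover that $C$ is finitely generated and projective as a right $B$-module.

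Next I would construct $E$. Let $\psi_{\pi}$ be a faithful $\Ad_{\pi}$-invariant state on $B(\mathscr{H}_{\pi})$; the normalised trace always works, being $\Ad_{\pi}$-invariant. Put $E=(\iota\otimes\psi_{\pi})|_{C}$. Using $(\iota\otimes\psi_{\pi})\circ\Ad_{\pi}=\psi_{\pi}(\,\cdot\,)1$ together with the identity $(\alpha\otimes\iota)(x)=(\iota\otimes\Ad_{\pi})(x)$ valid for $x\in C$, one checks that $\alpha(E(x))=E(x)\otimes 1$, so $E$ maps $C$ into $B$, and it is plainly a conditional expectation. By Lemma~\ref{LemHilb} it then suffices to show that $(C,\langle\,\cdot\,,\,\cdot\,\rangle_{E})$ is a finitely generated right Hilbert $B$-module; finite generation as a module is already known from the previous paragraph, so the only remaining point is completeness of the norm $\|x\|_{E}=\|E(x^{*}x)\|^{1/2}$, which I would get by comparing $\langle\,\cdot\,,\,\cdot\,\rangle_{E}$ with $\langle\,\cdot\,,\,\cdot\,\rangle_{B}$.

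That comparison is elementary finite-dimensional linear algebra: choosing an orthonormal basis $\{f_{m}\}$ of $B(\mathscr{H}_{\pi})$ for the unitarising Hilbert space structure and writing $x=\sum_{m}x_{m}\otimes f_{m}$ with $x_{m}\in A$, one has $\langle x,x\rangle_{B}=\sum_{m}x_{m}^{*}x_{m}$, while $\langle x,x\rangle_{E}=\sum_{m,m'}N_{m m'}\,x_{m}^{*}x_{m'}$ with $N_{m m'}=\psi_{\pi}(f_{m}^{*}f_{m'})$ the Gram matrix of $\{f_{m}\}$ for the inner product $(S,T)\mapsto\psi_{\pi}(S^{*}T)$ on $B(\mathscr{H}_{\pi})$; since $\psi_{\pi}$ is faithful, $N$ is a positive definite scalar matrix. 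Spectrally decomposing $N$ gives, for any $a_{m}\in A$, the inequalities $\lambda_{\min}(N)\sum_{m}a_{m}^{*}a_{m}\le\sum_{m,m'}N_{m m'}a_{m}^{*}a_{m'}\le\lambda_{\max}(N)\sum_{m}a_{m}^{*}a_{m}$, hence $\|\,\cdot\,\|_{E}$ and $\|\,\cdot\,\|_{B}$ are equivalent on $C$; as $(C,\langle\,\cdot\,,\,\cdot\,\rangle_{B})$ is complete, so is $(C,\langle\,\cdot\,,\,\cdot\,\rangle_{E})$, and Lemma~\ref{LemHilb} yields that $E$ is of finite-index type. The one place with real content is the Pimsner--Popa estimate $\|x\|^{2}\le c\,\|E(x^{*}x)\|$ hidden inside ``finite-index type''; the whole point of the argument is that this is not proved by hand but inherited, through the above norm comparison, from the finite generation of the Hilbert $B$-module $A\smbox\mathscr{H}_{\pi\times\overline{\pi}}$ already established in Theorem~\ref{TheoProj}. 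Everything else is bookkeeping.
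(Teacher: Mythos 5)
Your argument is essentially the paper's own proof: identify $(A\otimes B(\mathscr{H}_{\pi}))^{\mathbb{G}}$ with $A\smbox B(\mathscr{H}_{\pi})$, take $E=(\iota\otimes\psi_{\pi})$ for a faithful $\Ad_{\pi}$-invariant state $\psi_{\pi}$, and feed Corollary~\ref{CorFin} into Lemma~\ref{LemHilb}. Your Gram-matrix comparison of $\langle\,\cdot\,,\,\cdot\,\rangle_{E}$ with the canonical $\smbox$-inner product is correct but an unnecessary detour: the paper short-circuits it by taking the Hilbert space structure on $B(\mathscr{H}_{\pi})$ to be the one induced by the invariant state itself, $\langle S,T\rangle=\theta_{\pi}(S^{*}T)$, which simultaneously unitarizes $\Ad_{\pi}$ and makes the two inner products coincide on the nose (your matrix $N$ becomes the identity).

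There is, however, one genuinely false claim: the normalised trace is \emph{not} $\Ad_{\pi}$-invariant in general. Invariance of $\mathrm{Tr}/n$ would require $\sum_{k}u_{ik}u_{jk}^{*}=\delta_{ij}$, a sum over the \emph{second} index, which is not one of the unitarity relations $\sum_{k}u_{ik}^{*}u_{jk}=\delta_{ij}$, $\sum_{k}u_{ki}u_{kj}^{*}=\delta_{ij}$; applying $\varphi$ and using~\eqref{EqWorCharProperty} shows it forces $Q_{\pi}$ to be scalar, so the claim fails for any non-Kac $\mathbb{G}$ (e.g.\ $SU_{q}(2)$). Since your proof only needs the \emph{existence} of some faithful invariant state, this is repairable: for $\pi$ irreducible the (unique) invariant state is the quantum trace $\mathrm{Tr}(Q_{\pi}^{-1}\,\cdot\,)/\mathrm{Tr}(Q_{\pi}^{-1})$ in the paper's conventions, which is faithful because $Q_{\pi}^{-1}>0$; in general one can average any faithful state $\theta_{0}$ to $(\varphi\otimes\theta_{0})\circ\Ad_{\pi}$, which is invariant and still faithful. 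With that substitution the argument is complete and agrees with the paper's.
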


\begin{proof} By the previous lemma, it is equivalent to show that $B\subseteq A\smbox B(\mathscr{H}_{\pi})$ is an inclusion of finite-index type. Let us choose a faithful
$\Ad_{\pi}$-invariant state $\theta_{\pi}$ on $B(\mathscr{H}_{\pi})$. Then we can view $B(\mathscr{H}_{\pi})$ as a Hilbert space by the inner product $\langle x,y\rangle =
\theta_{\pi}(x^*y)$, and it is immediate that this turns $\Ad_{\pi}$ into a representation of $\mathbb{G}$.

Consider the map
\[
E\colon A\smbox B(\mathscr{H}_{\pi}) \rightarrow A\colon x\rightarrow (\iota\otimes \theta_{\pi})x.
\]
First, it is faithful, being a restriction of the faithful map $\iota \otimes \theta_\pi$. Next, its image is the intersection $A \otimes \mathbb{C} \cap A \smbox B(\mathscr{H}_\pi) =
B$. Moreover, the $E$-induced inner product $\langle x,y\rangle_B = E(x^*y)$ on $A\smbox B(\mathscr{H}_{\pi})$ coincides precisely with the one we defined in Section~\ref{SecAc}
(viewing $B(\mathscr{H}_{\pi})$ as a Hilbert space as above). Hence, by Corollary~\ref{CorFin}, $A\smbox B(\mathscr{H}_{\pi})$ is a finitely generated right Hilbert $B$-module. The
theorem now follows from Lemma~\ref{LemHilb}.
\end{proof}

Remark that, by the same proof, the theorem holds more generally for any coaction whose associated C$^*$-Galois map is adjointable (or indeed just its $\pi\otimes
\bar{\pi}$-localization is adjointable).

To end, let us show that, when $\pi$ is irreducible, the index of the above inclusion is in fact a scalar, equal to the square of the quantum dimension of $\mathscr{H}_{\pi}$. This
should not be surprising: it is the quantum analogue of the fact that if $V$ is representation of a compact group $G$, and $X$ a compact space with a free action by $G$, the pullback
of $V$ to $X$ by means of the action gives a vector bundle of constant rank the
dimension of $V$. For more on this in a von Neumann algebraic context, see
e.g.~\cite{Was2},\cite{Ban2} and~\cite{Ued1}.

Note that the formula~\eqref{EqIndexCondExp} still makes sense in case $E$ is just a $B$-bimodular map from $C$ to $B$ and the `inclusion map' map $B\rightarrow C$ is not injective.
We will make use of the following lemma, whose proof is very similar to the one for the statement that finite index is stable under the Jones tower construction (cf.~\cite[Proposition
1.6.6]{Wat1}). It will later on allow us to tune down one half an argument of~\cite{Was2}.\\

\begin{Lem}\label{LemIn} Let $B$ be a unital C$^*$-algebra, and $\mathcal{E}$ a right Hilbert $B$-bimodule which is finitely generated as a left and as a right $B$-module. Assume that
$\mathcal{E}$ has a \emph{left} $B$-valued inner product $\,\!_B\langle\,\cdot\,,\,\cdot\,\rangle$ such that $\mathcal{E}$ becomes also a left Hilbert $B$-bimodule (with respect to
the given $B$-bimodule structure on $\mathcal{E}$).

Then the map \[E\colon \mathcal{K}(\mathcal{E})_B \rightarrow B \colon \xi\eta^* \rightarrow \,\!_B\langle \xi,\eta\rangle\] is well-defined and $B$-bimodular (with respect to the
natural map $B\rightarrow \mathcal{K}(\mathcal{E})_B$). Choosing further a finite set of elements $\xi_j,\eta_j,\widetilde{\xi}_i,\widetilde{\eta}_i \in\mathcal{E}$ such that \[\sum_j
\xi_j\langle \eta_j,\xi\rangle_B = \xi = \sum_i \,\!_B\langle\xi,\widetilde{\xi}_i\rangle \widetilde{\eta}_i, \qquad \textrm{for all }\xi\in \mathcal{E},\] the elements \[v_{ij} =
\xi_j\widetilde{\eta}_i^*,\quad w_{ij} = \widetilde{\xi}_i\eta_j^*\] form a quasi-basis for $E$, and hence the index of $E$ is given by \[\Index(E) = \sum_j \xi_j \Big(\sum_i \big
\langle \widetilde{\eta}_i,\widetilde{\xi}_i \big \rangle_B\Big) \eta_j^*.\]\end{Lem}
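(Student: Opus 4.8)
The plan is to verify directly that the proposed pairs $(v_{ij}, w_{ij})$ form a quasi-basis for $E$ in the sense of the definition preceding Lemma~\ref{LemHilb}, i.e.~to check the two identities $\sum_{i,j} v_{ij} E(w_{ij}x) = x = \sum_{i,j} E(x v_{ij}) w_{ij}$ for all $x \in \mathcal{K}(\mathcal{E})_B$, and then to compute $\Index(E) = \sum_{i,j} v_{ij} w_{ij}$ by the definition. First I would establish that the formula $E(\xi\eta^*) = {}_B\langle \xi, \eta \rangle$ gives a well-defined $B$-bimodular map: well-definedness follows because the left inner product makes $\mathcal{E}$ a full left Hilbert $B$-module, so the map $\xi \otimes \bar\eta \mapsto {}_B\langle \xi, \eta\rangle$ is exactly the left-module analogue of the standard identification of finite-rank operators, and it extends continuously to all of $\mathcal{K}(\mathcal{E})_B$ precisely because $\mathcal{E}$ is finitely generated (so $\mathcal{K}(\mathcal{E})_B$ is unital and every element is a finite sum of rank-one operators); $B$-bimodularity is immediate from $b\cdot(\xi\eta^*) = (b\xi)\eta^*$ and $(\xi\eta^*)\cdot b = \xi(b^*\eta)^*$ together with the $B$-linearity properties of ${}_B\langle\,\cdot\,,\,\cdot\,\rangle$.

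Next I would do the main computation. Since $\mathcal{K}(\mathcal{E})_B$ is finitely generated, it suffices to check the quasi-basis identities on rank-one operators $x = \xi\eta^*$. For the first identity, write
\[
\sum_{i,j} v_{ij} E(w_{ij}\, \xi\eta^*) = \sum_{i,j} \xi_j \widetilde{\eta}_i^* \cdot {}_B\big\langle \widetilde{\xi}_i \langle \eta_j, \xi\rangle_B,\ \eta \big\rangle,
\]
using $w_{ij}(\xi\eta^*) = \widetilde{\xi}_i\eta_j^*\xi\eta^* = (\widetilde{\xi}_i\langle\eta_j,\xi\rangle_B)\eta^*$; then pull the scalar $\langle\eta_j,\xi\rangle_B$ out of the left inner product, resum over $i$ using the reconstruction identity $\sum_i {}_B\langle\,\cdot\,,\widetilde{\xi}_i\rangle\widetilde{\eta}_i = \iota$ applied suitably, and resum over $j$ using $\sum_j \xi_j\langle\eta_j,\,\cdot\,\rangle_B = \iota$, to collapse the whole sum to $\xi\eta^* = x$. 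The second identity is handled symmetrically, using $E(x v_{ij}) = E(\xi\eta^*\xi_j\widetilde{\eta}_i^*) = E((\xi\langle\eta,?\rangle_B)\dots)$ — more carefully, $(\xi\eta^*)(\xi_j\widetilde{\eta}_i^*) = \xi(\widetilde{\eta}_i \langle \xi_j,\eta\rangle_B^* )^*$, hmm, so one rewrites and applies $E$, then resums. Finally, the index formula follows by substituting into $\Index(E) = \sum_{i,j} v_{ij}w_{ij} = \sum_{i,j}\xi_j\widetilde{\eta}_i^*\widetilde{\xi}_i\eta_j^* = \sum_j \xi_j\big(\sum_i \langle\widetilde{\eta}_i,\widetilde{\xi}_i\rangle_B\big)\eta_j^*$, where I used $\widetilde{\eta}_i^*\widetilde{\xi}_i = \langle\widetilde{\eta}_i,\widetilde{\xi}_i\rangle_B$ under the identification of $B\otimes\mathbb{C}$-valued products.

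The step I expect to be the main obstacle is the careful bookkeeping of which $B$-linearity (left versus right, and on which argument) is being invoked when scalars from one inner product are moved past the other — the two inner products have opposite linearity conventions in their respective first arguments, and one must be consistent about the adjoint/star that appears when commuting a $B$-element past ${}_B\langle\,\cdot\,,\,\cdot\,\rangle$ versus $\langle\,\cdot\,,\,\cdot\,\rangle_B$. This is exactly the kind of subtlety that the paper flags by pointing to the analogous argument for stability of finite index under the Jones tower construction in~\cite[Proposition 1.6.6]{Wat1}, so I would model the computation on that proof, keeping track of stars carefully; once the conventions are pinned down, each resummation is a one-line application of the reconstruction formulas.
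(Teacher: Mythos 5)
Your overall strategy coincides with the paper's: reduce to rank-one operators $\xi\eta^*$ and collapse the double sum using the two reconstruction formulas, and indeed your first line $\sum_{i,j}v_{ij}E(w_{ij}\,\xi\eta^*)=\sum_{i,j}\xi_j\widetilde{\eta}_i^*\,{}_B\langle\widetilde{\xi}_i\langle\eta_j,\xi\rangle_B,\eta\rangle$ is exactly the paper's starting point. However, the step you defer --- ``pull the scalar $\langle\eta_j,\xi\rangle_B$ out of the left inner product, resum over $i$'' --- is the entire content of the proof, and as described it does not go through. The left inner product is $B$-linear for \emph{left} multiplication in its first slot, so there is no rule extracting a coefficient sitting to the \emph{right} of $\widetilde{\xi}_i$; worse, the reconstruction identity $\sum_i{}_B\langle\zeta,\widetilde{\xi}_i\rangle\widetilde{\eta}_i=\zeta$ requires $\widetilde{\xi}_i$ in the \emph{second} slot and $\widetilde{\eta}_i$ as a vector carrying a left $B$-coefficient, whereas in your expression $\widetilde{\xi}_i$ sits in the first slot and $\widetilde{\eta}_i$ occurs only through the operator $\widetilde{\eta}_i^*$. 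The missing mechanism (which is what the paper actually does) is a double flip through the adjoint: write $(\xi_j\widetilde{\eta}_i^*)c=\xi_j(c^*\widetilde{\eta}_i)^*$ with $c={}_B\langle\widetilde{\xi}_i\langle\eta_j,\xi\rangle_B,\eta\rangle$, use ${}_B\langle x,y\rangle^*={}_B\langle y,x\rangle$ and ${}_B\langle x,ya\rangle={}_B\langle xa^*,y\rangle$ to get $c^*={}_B\langle\eta\langle\eta_j,\xi\rangle_B^*,\widetilde{\xi}_i\rangle$, sum over $i$ to obtain $\eta\langle\eta_j,\xi\rangle_B^*$, take the star back to get $\xi_j\langle\eta_j,\xi\rangle_B\eta^*$, and finally sum over $j$. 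You must supply this chain explicitly; ``keeping track of stars carefully'' is a placeholder for the argument, not the argument.

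Two further points. First, the second quasi-basis identity is \emph{not} obtained symmetrically: computing $\sum_{i,j}E(xv_{ij})w_{ij}$ for $x=\xi\eta^*$ leads to sums of the form $\sum_i{}_B\langle\,\cdot\,,\widetilde{\eta}_i\rangle\widetilde{\xi}_i$, with the roles of $\widetilde{\xi}_i$ and $\widetilde{\eta}_i$ interchanged relative to the hypothesis, so it does not collapse by the same token (also, your intermediate formula $(\xi\eta^*)(\xi_j\widetilde{\eta}_i^*)=\xi(\widetilde{\eta}_i\langle\xi_j,\eta\rangle_B^*)^*$ carries a superfluous star; the correct expression is $\xi(\widetilde{\eta}_i\langle\xi_j,\eta\rangle_B)^*$). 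The paper itself only verifies the first half; the index formula remains legitimate because, once a genuine quasi-basis is known to exist (finite generation together with Lemma~\ref{LemHilb}), any family satisfying the one-sided identity produces the same element $\sum_{i,j}v_{ij}w_{ij}=\Index(E)$ --- but if you claim the second half, you owe a separate argument. Second, your final evaluation $\sum_{i,j}v_{ij}w_{ij}=\sum_j\xi_j\bigl(\sum_i\langle\widetilde{\eta}_i,\widetilde{\xi}_i\rangle_B\bigr)\eta_j^*$ is correct as stated.
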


To be clear, the adjoint operation is taken with respect to the right Hilbert module structure. Note that, if the sum $\sum_i \big \langle \widetilde{\eta}_i,\widetilde{\xi}_i
\big\rangle_B$ equals $c1$ for some scalar $c$, then we have $\Index(E) = c1$.

\begin{proof} We will only verify that one half of the quasi-basis property w.r.t.~ $E$ is satisfied for the pairs $(v_{i j}, w_{i j})$. Note that we can restrict ourselves to the
verification of the identity $T = \sum_{i, j} v_{i, j} E(w_{i, j} T)$ for rank $1$ endomorphisms $T$ of $\mathcal{E}$, as we can obtain the same formula for arbitrary $T \in
\mathcal{K}(\mathcal{E})$ by the linearity.

Suppose that $T = \xi \eta^*$ for some vectors $\xi, \eta$ in $\mathcal{E}$.  Then we have $w_{i j} T = \tilde{\xi}_i \langle \eta_j, \xi \rangle_B \eta^*$.  Hence we can compute
 \[
 \sum_{i, j} v_{i j} E(w_{i j} T) = \sum_{i, j} \xi_j \tilde{\eta}_i^* \big._B \big \langle \tilde{\xi}_i \langle \eta_j, \xi \rangle_B, \eta \big \rangle = \sum_{i, j} \xi_j \big (
 \big._B \big \langle \tilde{\xi}_i \langle \eta_j, \xi \rangle_B, \eta \big \rangle^* \tilde{\eta}_i \big)^*.
 \]
 Note that the inner product satisfies the symmetry ${}_B \langle x, y \rangle^* = {}_B \langle y, x \rangle$, and the compatibility ${}_B \langle x, y a \rangle = {}_B \langle x a^*,
 y \rangle$ with the bimodule structure.  Using these, we may further transform the right hand side of the above to
 \[
 \sum_{i, j} \xi_j \left ( {}_B \big \langle \eta \langle \eta_j, \xi \rangle_B^*,  \tilde{\xi}_i \big \rangle \tilde{\eta}_i \right)^* = \sum_j \xi_j \left ( \eta \langle \eta_j, \xi
 \rangle_B^* \right )^* = \sum_j \xi_j \langle \eta_j, \xi \rangle_B \eta^* = \xi \eta^*,
 \]
 which proves $\sum_{i, j} v_{i j} E(w_{i j} T) = T$.
\end{proof}

Let us now fix a free action of $\mathbb{G}$ on a unital C$^*$-algebra $A$. We also fix an irreducible finite dimensional unitary representation $\pi$ of $\mathbb{G}$. Let us recall
here also the following \emph{strong left invariance} property for $\varphi$: if $g,h\in P(\mathbb{G})$, then we have\[ \varphi(gh_{(2)})\,h_{(1)} = \varphi(g_{(2)}h)\,S(g_{(1)}).\]

\begin{Lem}\label{LemFait} The natural representation of $A\smbox B(\mathscr{H}_{\pi})$ on $A\smbox \mathscr{H}_{\pi}$ is faithful.
\end{Lem}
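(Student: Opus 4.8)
The plan is to recognize faithfulness as an easy consequence of the single density statement $[(A\smbox\mathscr{H}_\pi)\cdot A]=A\otimes\mathscr{H}_\pi$, and then to derive that density from freeness. Write $M=A\smbox B(\mathscr{H}_\pi)=(A\otimes B(\mathscr{H}_\pi))^{\mathbb{G}}$ and $N=A\smbox\mathscr{H}_\pi$, and put $n=\dim\mathscr{H}_\pi$. Since $A$ is unital, $A\otimes\mathscr{H}_\pi$ is a free right Hilbert $A$-module, so $A\otimes B(\mathscr{H}_\pi)=\mathcal{L}(A\otimes\mathscr{H}_\pi)_A$ acts faithfully on it by left multiplication; the natural representation of the C$^*$-subalgebra $M$ on $N$ is the restriction of this action (which preserves $N$), and it manifestly commutes with right multiplication by $A$. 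Hence, if $x\in M$ kills $N$, then $x$ kills $[N\cdot A]$ as well, so once we know $[N\cdot A]=A\otimes\mathscr{H}_\pi$ such an $x$ kills all of $A\otimes\mathscr{H}_\pi$ and is therefore $0$. So everything comes down to proving $[(A\smbox\mathscr{H}_\pi)\cdot A]=A\otimes\mathscr{H}_\pi$.

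For that, I would write $z=\sum_i z_i\otimes e_i\in A\smbox\mathscr{H}_\pi$ with $z_i\in A_\pi$; the defining relation $(\alpha\otimes\iota)z=(\iota\otimes\delta_\pi)z$ amounts to $\alpha(z_j)=\sum_i z_i\otimes u_{ij}$. Consequently, for $a\in A$ and scalars $(c_i)$,
\[
\alpha\Bigl(\sum_i c_i z_i\Bigr)(a\otimes 1)=\sum_{i,j}c_i\,(z_j a)\otimes u_{ji},
\]
an element of $\alpha(A_\pi)(A\otimes 1)$; and as $z$, $(c_i)$ and $a$ vary these span $\alpha(A_\pi)(A\otimes 1)$, since $A_\pi$ is spanned by the elements $(\iota\otimes\omega)z$. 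By freeness the closed span of $\alpha(A_\pi)(A\otimes 1)$ is all of $A\otimes C(\mathbb{G})_\pi$ (Proposition~\ref{PropFree} and its proof). Because $\pi$ is irreducible, $\{u_{ji}\}_{i,j}$ is a basis of the $n^2$-dimensional space $C(\mathbb{G})_\pi$, and fixing it identifies $A\otimes C(\mathbb{G})_\pi$ with $(A\otimes\mathscr{H}_\pi)\otimes\mathbb{C}^n$ via $u_{ji}\leftrightarrow e_j\otimes\varepsilon_i$, under which the displayed element becomes $(za)\otimes c$ with $c=(c_i)_i$. Thus, writing $V$ for the linear span of $\{za:z\in A\smbox\mathscr{H}_\pi,\ a\in A\}$ inside $A\otimes\mathscr{H}_\pi$, the subspace $V\otimes\mathbb{C}^n$ is dense in $(A\otimes\mathscr{H}_\pi)\otimes\mathbb{C}^n$. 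Since $\mathbb{C}^n$ is finite-dimensional this gives $\overline V\otimes\mathbb{C}^n=\overline{V\otimes\mathbb{C}^n}=(A\otimes\mathscr{H}_\pi)\otimes\mathbb{C}^n$, whence $\overline V=A\otimes\mathscr{H}_\pi$, the required density.

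The step I expect to be the crux is this last one: freeness only supplies density of the \emph{twisted} family $\alpha(A_\pi)(A\otimes 1)$ in $A\otimes C(\mathbb{G})_\pi$, and it must be untwisted into density of $(A\smbox\mathscr{H}_\pi)\cdot A$ in $A\otimes\mathscr{H}_\pi$. The device that makes this succeed is that $C(\mathbb{G})_\pi$ is finite-dimensional with basis the matrix coefficients of $\pi$, so that the twist is, from the point of view of closed submodules, merely an amplification by the fixed finite-dimensional space $\mathbb{C}^n$, which can be cancelled. The remaining verifications — that the $M$-action on $N$ is well defined and commutes with right multiplication by $A$, and the coordinate form of the $\smbox$-condition — are routine consequences of the comodule identities (notably $\Ad_\pi(T)\,\delta_\pi(\xi)=\delta_\pi(T\xi)$).
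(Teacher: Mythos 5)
Your proof is correct, but it takes a genuinely different route from the paper's. The paper identifies $\mathcal{K}(A\smbox \mathscr{H}_{\pi})$ with the ideal $\lbrack (A\smbox\mathscr{H}_{\pi})(A\smbox\mathscr{H}_{\pi})^*\rbrack$ of $A\smbox B(\mathscr{H}_{\pi})$ and shows \emph{by explicit computation} --- using strong left invariance, the expressibility of $1\otimes h$ as a combination of elements $\alpha(x)(y^*\otimes 1)$, and the Woronowicz orthogonality relations~\eqref{EqWorCharProperty} --- that this ideal contains the unit, hence is everything; faithfulness follows. You instead reduce faithfulness to the right-sided fullness $\lbrack (A\smbox\mathscr{H}_{\pi})\cdot A\rbrack = A\otimes\mathscr{H}_{\pi}$ (using that $A\otimes B(\mathscr{H}_{\pi})=\mathcal{L}(A\otimes\mathscr{H}_{\pi})_A$ acts faithfully on the free module $A\otimes \mathscr{H}_{\pi}$ and commutes with the right $A$-action), and you obtain that fullness by untwisting the freeness condition $\lbrack\alpha(A_{\pi})(A\otimes 1)\rbrack = A\otimes C(\mathbb{G})_{\pi}$ through the basis $\{u_{ji}\}$ of the finite-dimensional space $C(\mathbb{G})_{\pi}$; the cancellation of the $\mathbb{C}^n$-amplification is legitimate precisely because the second leg is finite dimensional, so all the relevant norms are equivalent and closure commutes with the amplification. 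Your argument is more elementary and conceptual: it avoids strong left invariance and the $Q_{\pi}$-matrices entirely, and it only needs the \emph{density} of $\alpha(A_{\pi})(A\otimes 1)$ rather than the exact algebraic solvability of $1\otimes h=\sum_k\alpha(x_k)(y_k^*\otimes 1)$ that the paper invokes. What the paper's computational proof buys in exchange is twofold: it establishes the stronger identification $A\smbox B(\mathscr{H}_{\pi})=\mathcal{K}(A\smbox\mathscr{H}_{\pi})$ (your argument only yields that $\mathcal{K}(A\smbox\mathscr{H}_{\pi})$ is an essential ideal, which suffices for faithfulness but not for equality), and its intermediate elements are recycled verbatim to build the explicit quasi-basis in the subsequent index computation, so the paper's proof is doing double duty that yours would not.
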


\begin{proof} Note first that we can realize $\mathcal{K}(A\smbox \mathscr{H}_{\pi})$ as an ideal inside $A\smbox B(\mathscr{H}_{\pi})$, formed by the span of the elements of the form
$\sum_{i,j} x_{i}y_j^*\otimes e_ie_j^*$ with $\sum x_i\otimes e_i$ and $\sum y_i\otimes e_i$ inside $A\smbox \mathscr{H}_{\pi}$. The faithfulness of our representation is then
equivalent to that this ideal equals the whole of $A\smbox B(\mathscr{H}_{\pi})$.

Now choose an orthonormal basis $e_i \in \mathscr{H}_{\pi}$, and write $\delta_{\pi}(e_i) = \sum_j u_{ij}\otimes e_j$. Take $x\in P(A)$. Then, using that $S(u_{ij}^*)=u_{ji}$ together
with strong left invariance, one finds that $\sum_j \varphi(u_{ji}^*x_{(1)}) x_{(0)} \otimes e_j$ lies in $A\smbox \mathscr{H}_{\pi}$, where we have used again the Sweedler notation
for the coaction $\alpha$. Hence $\mathcal{K}(A\smbox \mathscr{H}_{\pi})$ contains all elements of the form \[\sum_{k,l} \varphi(u_{ki}^*x_{(1)})\varphi(y_{(1)}^*u_{lj})
x_{(0)}y_{(0)}^* \otimes e_ke_l^*.\] But we can write this in the form \[\sum_{k,l} \varphi(u_{ki}^*x_{(2)})\varphi(S^{-1}(x_{(1)})(x_{(0)}y^*)_{(1)}u_{lj})\,(x_{(0)}y^*)_{(0)}
\otimes e_ke_l^*.\] Now from the proof of Propositions~\ref{PropAd} and~\ref{PropFree}, it follows that we can express any $a\otimes h$ with $a\in P(A)$ and $h\in P(\mathbb{G})$ as a
linear combination of elements of the form $\alpha(x)(y^*\otimes 1)$. Hence, taking the particular case $a=1$, we see that $\mathcal{K}(A\smbox \mathscr{H}_{\pi})$ contains all
elements of the form \[\sum_{k,l} \varphi(u_{ki}^*h_{(2)})\varphi(S^{-1}(h_{(1)})u_{lj})\, 1 \otimes e_ke_l^*.\] Rewriting this slightly by means of strong left invariance, this
becomes \[\sum_{k,l,p} \varphi(u_{pi}^*h)\varphi(u_{kp}^*u_{lj}) \,1 \otimes e_ke_l^*.\] Using the matrices $Q_{\pi}$ from Section~\ref{SubSecCQG}, the above simplifies to
\[\dim_q(\mathscr{H}_{\pi})^{-1}\Big(\sum_{p}\left \langle e_j,Q_{\pi}e_p\right \rangle \varphi(u_{pi}^*h)\Big) \,1 \otimes 1.\] It is clear that the sum does not vanish for at least
one value for $i,j$ and $h$, proving that $\mathcal{K}(A\smbox \mathscr{H}_{\pi})$ contains the unit of $A\smbox B(\mathscr{H}_{\pi})$, and is thus equal to the latter algebra.
\end{proof}

Remark that the above proof, with a slightly modified last step, works just as well for representations which are not irreducible. Although we will not really need it, let us make the
link then at this point with the theory of eigenmatrices.

\begin{Cor} Let $\pi$ be a representation of $\mathbb{G}$, and choose an orthonormal basis $(e_i)$ for $\mathscr{H}_{\pi}$. Write $\delta(e_i) = \sum_j u_{ij}\otimes e_j$, and write
\[A(\pi) = \Big\{ x= \sum_{ij} x_{ij}\otimes e_{ji}\in M_n(A) \mid \alpha(x_{ij}) = \sum_k x_{ik}\otimes u_{kj}\Big\} \subseteq A\otimes B(\mathscr{H}_{\pi}),\] which is called the
space of $\pi$-eigenmatrices. Then we have
\begin{equation}\label{EqEigenMatEnough}
A(\pi)A(\pi)^* = (A\otimes B(\mathscr{H}_{\pi}))^{\mathbb{G}}.
\end{equation}
\end{Cor}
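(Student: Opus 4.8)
The plan is to realize both sides of the asserted equality as the algebra $\mathcal{K}(A\smbox\mathscr{H}_{\pi})$ of compact endomorphisms, embedded inside $A\smbox B(\mathscr{H}_{\pi})$ exactly as in the proof of Lemma~\ref{LemFait} (which, as noted in the remark just above, carries over except for its last step to reducible $\pi$). Recall from there that, writing $z=\sum_i x_i\otimes e_i$ and $w=\sum_j y_j\otimes e_j$ for elements of $A\smbox\mathscr{H}_{\pi}$, the ``rank-one'' element $\theta_{z,w}:=\sum_{i,j}x_iy_j^*\otimes e_ie_j^*$ lies in $A\smbox B(\mathscr{H}_{\pi})$, and the (generalized) Lemma~\ref{LemFait} says precisely that the linear span of all such $\theta_{z,w}$ is all of $A\smbox B(\mathscr{H}_{\pi})$, which in turn equals $(A\otimes B(\mathscr{H}_{\pi}))^{\mathbb{G}}$ by the lemma preceding Theorem~\ref{TheoFinIn}.

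First I would unwind the definition of the eigenmatrix space. For $x=\sum_{ij}x_{ij}\otimes e_{ji}\in M_n(A)$, set $z^{(x)}_i=\sum_j x_{ij}\otimes e_j$ for each $i$. A direct check against the definitions of $A(\pi)$ and of $A\smbox\mathscr{H}_{\pi}$ shows that the relation $\alpha(x_{ij})=\sum_k x_{ik}\otimes u_{kj}$ holds for all $i,j$ if and only if every ``column'' $z^{(x)}_i$ belongs to $A\smbox\mathscr{H}_{\pi}$; thus $A(\pi)$ is, as a vector space, a direct sum of $n$ copies of $A\smbox\mathscr{H}_{\pi}$ indexed by the column index $i$.

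Next come the two bookkeeping inclusions. Multiplying matrices, for $x,y\in A(\pi)$ one gets $xy^*=\sum_{i}\sum_{j,l}x_{ij}y_{il}^*\otimes e_je_l^*=\sum_{i=1}^{n}\theta_{z^{(x)}_i,\,z^{(y)}_i}$, which exhibits $xy^*$ as an element of $\mathcal{K}(A\smbox\mathscr{H}_{\pi})$; hence $A(\pi)A(\pi)^*\subseteq\mathcal{K}(A\smbox\mathscr{H}_{\pi})\subseteq (A\otimes B(\mathscr{H}_{\pi}))^{\mathbb{G}}$. For the reverse inclusion, given $z,w\in A\smbox\mathscr{H}_{\pi}$ let $x$ (resp.\ $y$) be the eigenmatrix having $z$ (resp.\ $w$) in its first column and zero in all other columns --- these lie in $A(\pi)$ by the description above --- so that $xy^*=\theta_{z,w}$, and thus $A(\pi)A(\pi)^*$ contains every $\theta_{z,w}$. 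Since $A$ is unital and the action is free, Corollary~\ref{CorFin} gives that $A\smbox\mathscr{H}_{\pi}$ is finitely generated projective over $B$; hence it admits a finite ``Parseval frame'' $\{\xi_k\}$ with $\sum_k\theta_{\xi_k,\xi_k}=1$, so that every $T\in\mathcal{K}(A\smbox\mathscr{H}_{\pi})$ equals $\sum_k\theta_{T\xi_k,\xi_k}$, a finite linear combination of elements $\theta_{z,w}$. Therefore $\mathcal{K}(A\smbox\mathscr{H}_{\pi})\subseteq A(\pi)A(\pi)^*$.

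Combining the two inclusions with the identifications recalled in the first paragraph yields $A(\pi)A(\pi)^*=\mathcal{K}(A\smbox\mathscr{H}_{\pi})=A\smbox B(\mathscr{H}_{\pi})=(A\otimes B(\mathscr{H}_{\pi}))^{\mathbb{G}}$. The only point requiring genuine care is the extension of Lemma~\ref{LemFait} to reducible representations --- i.e.\ that the span of the $\theta_{z,w}$ still contains the unit of $A\smbox B(\mathscr{H}_{\pi})$ --- which is the ``slightly modified last step'' mentioned in the remark preceding the corollary; everything else is routine index manipulation. (If one instead reads $A(\pi)A(\pi)^*$ as a \emph{closed} linear span, the appeal to finite generation in the third paragraph is unnecessary, since the $\theta_{z,w}$ are always norm-dense in $\mathcal{K}(A\smbox\mathscr{H}_{\pi})$.)
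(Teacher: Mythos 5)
Your argument is correct and follows essentially the same route as the paper: both identify $A(\pi)$ with $(A\smbox\mathscr{H}_{\pi})\otimes\mathscr{H}_{\pi}^*$ (your column decomposition), deduce $A(\pi)A(\pi)^*=(A\smbox\mathscr{H}_{\pi})(A\smbox\mathscr{H}_{\pi})^*$, and then invoke the extension of Lemma~\ref{LemFait} to reducible representations. The Parseval-frame step in your third paragraph is a harmless extra precaution that the paper's proof does not need to make explicit.
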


\begin{proof} It is easily verified that we have an isomorphism of vector spaces \[(A\smbox \mathscr{H}_{\pi})\otimes \mathscr{H}_{\pi}^* \rightarrow A(\pi)\colon \Big (\sum_i
a_{i}\otimes e_i \Big)\otimes e_j^* \rightarrow \sum_i a_i \otimes e_ie_j^*.\] It follows immediately that \[A(\pi)A(\pi)^* = (A\smbox \mathscr{H}_{\pi})(A\smbox
\mathscr{H}_{\pi})^*,\] from which the corollary follows by the previous lemma.
\end{proof}

Peligrad~\cite[Corollary 3.5]{Pel2} showed that, for the case of compact group actions, the condition~\eqref{EqEigenMatEnough} is equivalent to the saturatedness of the action. These
results are also closely related to the \emph{strong monoidality} of the operation $A\smbox -$.

Assume now again $\pi$ irreducible, and consider on $A\smbox \mathscr{H}_{\pi}$ the left Hilbert $B$-module structure by \[\,\!_B\langle \xi,\eta\rangle = (\iota\otimes
\theta_{\pi})\xi\eta^*,\] where $\theta_{\pi}$ is the unique $\Ad_{\pi}$-invariant state on $B(\mathscr{H}_{\pi})$. It is easy to see that the conditional expectation $E\colon A\smbox
B(\mathscr{H}_{\pi})\cong \mathcal{K}(A\smbox \mathscr{H}_{\pi}) \rightarrow B$ from Theorem~\ref{TheoFinIn} corresponds precisely to the one from Lemma~\ref{LemIn}.

\begin{Theorem}[Theorem~\ref{ThmFinIndIncl}, second part] The index of the conditional expectation $(\iota\otimes \theta_{\pi})$ for $B\subseteq (A\otimes
B(\mathscr{H}_{\pi}))^{\mathbb{G}}$ equals $\dim_q(\mathscr{H}_{\pi})^2$, the square of the quantum dimension of $\mathscr{H}_{\pi}$.\end{Theorem}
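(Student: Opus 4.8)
The plan is to deduce the statement from Lemma~\ref{LemIn} after producing suitable bimodule frames for $A\smbox\mathscr{H}_\pi$. By Corollary~\ref{CorFin} the space $A\smbox\mathscr{H}_\pi$ is a finitely generated right Hilbert $B$-module; applying the same corollary to the conjugate representation $\overline\pi$ and passing to adjoints (recall $A_{\overline\pi}=\{x^*:x\in A_\pi\}$), it is finitely generated also as a left $B$-module, so that together with the left inner product ${}_B\langle\xi,\eta\rangle=(\iota\otimes\theta_\pi)(\xi\eta^*)$ it satisfies the hypotheses of Lemma~\ref{LemIn}. Under the identification $\mathcal{K}(A\smbox\mathscr{H}_\pi)\cong A\smbox B(\mathscr{H}_\pi)$ of Lemma~\ref{LemFait}, the conditional expectation $E=\iota\otimes\theta_\pi$ is precisely the map $E$ of Lemma~\ref{LemIn}, so that
\[
\Index(E)=\sum_j\xi_j\Big(\sum_i\langle\widetilde\eta_i,\widetilde\xi_i\rangle_B\Big)\eta_j^*
\]
for any right-module frame $(\xi_j,\eta_j)$ and left-module frame $(\widetilde\xi_i,\widetilde\eta_i)$. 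By the remark following Lemma~\ref{LemIn} it is then enough to find a left-module frame with $\sum_i\langle\widetilde\eta_i,\widetilde\xi_i\rangle_B=\dim_q(\mathscr{H}_\pi)^2\cdot 1$.

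For the right frame I would take, using freeness, a finite set $\{z_a\}\subseteq A\smbox\mathscr{H}_\pi$ with $\sum_a z_az_a^*=1_A\otimes 1_{B(\mathscr{H}_\pi)}$, equivalently $\sum_a z_a\langle z_a,\,\cdot\,\rangle_B=\iota$; such a partition of unity exists precisely because $A\smbox\mathscr{H}_\pi$ is finitely generated projective over $B$ and $\mathcal{K}(A\smbox\mathscr{H}_\pi)$ is unital, which is where freeness enters through Corollary~\ref{CorFin} and Lemma~\ref{LemFait}. For the left frame I would first read off the answer in the model case $(A,\alpha)=(C(\mathbb{G}),\Delta)$ -- equivalently for the trivial bundle $B\otimes B(\mathscr{H}_\pi)$ -- where $\{1\otimes e_i\}$ is an orthonormal right frame, $\big\{\dim_q(\mathscr{H}_\pi)^{1/2}(1\otimes Q_\pi^{1/2}e_i)\big\}$ is a tight left frame, and $\sum_i\langle\widetilde\xi_i,\widetilde\xi_i\rangle_B=\dim_q(\mathscr{H}_\pi)\,\mathrm{Tr}(Q_\pi)\cdot 1=\dim_q(\mathscr{H}_\pi)^2\cdot 1$, using that $\theta_\pi(\,\cdot\,)=\mathrm{Tr}(Q_\pi^{-1}\,\cdot\,)/\mathrm{Tr}(Q_\pi^{-1})$ and $\mathrm{Tr}(Q_\pi)=\mathrm{Tr}(Q_\pi^{-1})=\dim_q(\mathscr{H}_\pi)$. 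In general I would obtain a left frame by the corresponding $Q_\pi$-twisted modification of $\{z_a\}$, made precise by passing through the conjugate module $\overline{A\smbox\mathscr{H}_\pi}$ (with the ${}_B\langle\,\cdot\,,\,\cdot\,\rangle$-structure), which is isomorphic to $A\smbox\mathscr{H}_{\overline\pi}$ up to a $Q_\pi$-rescaling of the inner product, and using a tight right frame of $A\smbox\mathscr{H}_{\overline\pi}$, which again exists by freeness applied to $\overline\pi$.

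The remaining step is to evaluate $\sum_i\langle\widetilde\eta_i,\widetilde\xi_i\rangle_B$ for this twisted left frame. Expanding the $\widetilde\xi_i$ in the right frame $\{z_a\}$, the reconstruction identity $\sum_a z_az_a^*=1$ is applied once to cancel the $A$-dependence, and the Woronowicz relations~\eqref{EqWorCharProperty} -- through $\theta_\pi$ and $\mathrm{Tr}(Q_\pi)=\mathrm{Tr}(Q_\pi^{-1})=\dim_q(\mathscr{H}_\pi)$ -- are applied once to evaluate the surviving scalar as $\dim_q(\mathscr{H}_\pi)^2\cdot 1_B$. Being a scalar multiple of $1_B$, this yields $\Index(E)=\dim_q(\mathscr{H}_\pi)^2\cdot 1$ by the quoted remark.

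I expect the main difficulty to lie in making the left-module frame explicit in the twisted situation: there is no constant section $1\otimes Q_\pi^{1/2}e_i$ available to twist, and the right frame $\{z_a\}$ need neither consist of $\dim\mathscr{H}_\pi$ vectors nor be orthonormal -- its Gram matrix $(\langle z_a,z_b\rangle_B)_{a,b}$ is only a projection in some $M_N(B)$ -- so one must check that the $Q_\pi$-twist is compatible with this projection and that non-orthonormality does not obstruct the collapse of the $A$-dependence. This bookkeeping is of the same nature as, and no heavier than, the computations in the proof of Lemma~\ref{LemFait} and the proof that finite index is preserved under the Jones tower (cf.~\cite[Proposition~1.6.6]{Wat1}), which already underlies Lemma~\ref{LemIn}.
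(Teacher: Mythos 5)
Your proposal is correct and follows essentially the same route as the paper: Lemma~\ref{LemIn} applied to $A\smbox \mathscr{H}_{\pi}$, with the right frame supplied by Corollary~\ref{CorFin} for $\pi$, the left frame obtained from a right frame of $A\smbox \mathscr{H}_{\overline{\pi}}$ via the adjoint identification with the $\theta_{\pi}$-modified inner product on $\mathscr{H}_{\pi}^*$, and the surviving scalar $\dim_q(\mathscr{H}_{\pi})\mathrm{Tr}(Q_{\pi})=\dim_q(\mathscr{H}_{\pi})^2$ evaluated through the Woronowicz relations~\eqref{EqWorCharProperty}. The only divergence is in the ``bookkeeping'' you defer: the paper sidesteps it by making both frames completely explicit via the strong-left-invariance construction of Lemma~\ref{LemFait} (elements $\sum_p \varphi(u_{pi}^*x_{(1)})\,x_{(0)}\otimes e_p$ built from solutions of $\sum_k \alpha(x_k)(y_k^*\otimes 1)=1\otimes h$), and note that the collapse of the $A$-dependence in $\sum_i\langle\widetilde{\eta}_i,\widetilde{\xi}_i\rangle_B$ comes from the reconstruction identity of the \emph{left} frame (equivalently, the tight right frame on $A\smbox\mathscr{H}_{\overline{\pi}}$), not from expanding in the right frame $\{z_a\}$ as you suggest.
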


\begin{proof} As in Section~\ref{SubSecCQG}, let us identify $\mathscr{H}_{\overline{\pi}}$ with $\mathscr{H}_{\pi}^*$ endowed with the modified inner product
\[
\llangle e_i^*,e_j^*\rrangle = \theta_{\pi}(e_ie_j^*).
\]
Then we calculate that \[\Big (\,\!_B \big \langle \sum_i x_i\otimes e_i,\sum_i y_i\otimes e_i \big \rangle \sum_i z_i\otimes e_i \Big)^* = \sum_i z_i^*\otimes e_i^* \Big  \langle
\sum_i y_i^*\otimes e_i^*,\sum_i x_i^* \otimes e_i^* \Big \rangle_B,\] where the $B$-valued inner product on the right is now interpreted in $B\smbox \mathscr{H}_{\overline{\pi}}$.

Choose now orthonormal bases $(e_i)_i$ and $(f_i)_i$ respectively in $\mathscr{H}_{\pi}$ and $\mathscr{H}_{\overline{\pi}}$, and write $\delta(e_i) = \sum_j u_{ij}\otimes e_j$ and
$\delta_{\overline{\pi}}(f_i) = \sum_j v_{ij}\otimes f_j$. Let us choose $h,g \in P(\mathbb{G})$ such that \[\varphi(u_{ij}^* h) = \delta_{ij}, \quad \varphi(v_{ij}^* g) =
\delta_{ij}.\] Finally, let us choose a finite collection of $x_k,y_k,w_l,z_l \in P(A)$ such that \[\sum_k\alpha(x_k)(y_k^*\otimes 1) = 1\otimes h, \quad
\sum_l\alpha(w_l)(z_l^*\otimes 1) = 1\otimes g.\] Then the proof of Lemma~\ref{LemFait}, coupled with the observation at the beginning of the current proof, shows that the elements
\begin{align*} \xi_{i,k} &= \sum_p \varphi(u_{pi}^*x_{k(1)})\,x_{k(0)}\otimes e_p,& \eta_{i,k} &= \sum_p \varphi(u_{pi}^*y_{k(1)})\,y_{k(0)}\otimes e_p,\\ \widetilde{\xi}_{i,l} &=
\sum_p \varphi(z_{l(1)}^*v_{pi})\, z_{l(0)}^* \otimes f_p^*, & \widetilde{\eta}_{i,l} &= \sum_p \varphi(w_{l(1)}^*v_{pi}) w_{l(0)}^* \otimes f_p^*\end{align*} satisfy the hypotheses
of Lemma~\ref{LemIn}. To be clear, if $v = \sum_i c_i e_i^* \in \mathscr{H}_{\overline{\pi}}  = \mathscr{H}_{\pi}^*$, then $v^*$ denotes the vector $\sum_i \overline{c_i} e_i$ in
$\mathscr{H}_\pi$.

Now the same computation as in Lemma~\ref{LemFait} shows that \[\sum_{i,k} \big \langle \widetilde{\eta}_{i,k},\widetilde{\xi}_{i,k}\big \rangle_B = \Big(\sum_i f_if_i^*\Big)1.\]
Therefore we obtain $\Index(E) = \Big(\sum_i f_if_i^*\Big)1$. From the way $\llangle \cdot, \cdot \rrangle$ was defined and~\eqref{EqWorCharProperty}, we see that a possible choice of
$(f_i)_i$ is
\[
f_i = \dim_q(\mathscr{H}_{\pi})^{1/2}\sum_k \langle e_i,Q_{\pi}^{1/2}e_k\rangle e_k^*.
\]
For this choice we can compute $\sum_i f_if_i^* = \dim_q(\mathscr{H}_{\pi}) \textrm{Tr}(Q_{\pi}) = \dim_{q}(\mathscr{H}_{\pi})^2$, finishing the proof.
\end{proof}

\emph{Acknowledgements}: We would like to thank P. Hajac, A. Skalski and W. Szyma\'{n}ski for valuable discussions. We also thank P. Hajac for setting up the research group `New
Results in Noncommutative Topology' at the Banach Center (IMPAN, Poland), of which the authors were members and where the collaboration on this article started.


\begin{thebibliography}{00}


\bibitem{Ban1} T. Banica, Compact Kac algebras and commuting squares, \emph{J. Funct. Anal.} \textbf{176} (1) (2000), 80--99.
\bibitem{Ban2} T. Banica, Subfactors associated to compact Kac algebras, \emph{Integral Equations Operator Theory} \textbf{39} (1) (2001), 1--14.
\bibitem{Boc1} F. Boca, Ergodic actions of compact matrix pseudogroups on C$^*$-algebras, In \emph{Recent advances in
operator algebras (Orl\'{e}ans, 1992)}, \emph{Ast\'{e}risque} \textbf{232} (1995), 93--109.
\bibitem{Bic1} J. Bichon, A. De Rijdt and S. Vaes, Ergodic coactions with large quantum multiplicity and monoidal equivalence of quantum groups, \emph{Comm. Math. Phys.} \textbf{262}
    (2006), 703--728.
\bibitem{Ell1} D.A. Ellwood, A New Characterisation of Principal Actions, \emph{J. Funct. Anal.} \textbf{173} (1) (2000), 49--60.
\bibitem{Hoe1} R. H{\o}egh-Krohn, M.B. Landstad and E. St{\o}rmer, Compact ergodic groups of automorphisms, \emph{Ann. of Math.} \textbf{114} (2) (1981), 75--86.
\bibitem{Jon1} V. Jones, Index for subfactors, \emph{Invent. Math.} \textbf{72} (1983), 1--25.
\bibitem{Kas1} G.G. Kasparov, The operator $K$-functor and extensions of C$^*$-algebra, \emph{Math. USSR Izvestija}  \textbf{16} (3) (1981), 513--572.
\bibitem{Lan1} E.C. Lance: Hilbert C$^*$-modules. A toolkit for operator algebraists, \emph{London Math. Soc. Lecture Note Series} \textbf{210}, Cambridge Univ. Press, Cambridge
    (1995).
\bibitem{Mae1} A. Maes and A. Van Daele, Notes on Compact Quantum Groups,
\emph{Nieuw Arch. Wisk.} \textbf{4} (16) (1998), 73--112.
\bibitem{Nik1} D. Nikshych and L. Vainerman, A Galois correspondence for
$II_1$-factors and quantum groupoids, \emph{J. Funct. Anal.} \textbf{178} (1)
(2000), 113-–142.
\bibitem{Pel2} C. Peligrad, Locally compact group actions on C$^*$-algebras and
compact subgroups, \emph{J. Funct. Anal.} \textbf{76} (1) (1988), 126--139.
\bibitem{Pel1} W. Szyma\'{n}ski and C. Peligrad, Saturated actions of Hopf algebras on C$^*$-algebras, \emph{Math Scandinavica} \textbf{75} (1994), 217--239.
\bibitem{Phi1} N.C. Phillips, Equivariant K-theory and freeness of group actions on C$^*$-algebras, \emph{Lecture Notes in Math.} \textbf{1274}, Springer-Verlag, Berlin and New York,
    viii + 371.
\bibitem{Pod1} P. Podle\'{s}, Symmetries of quantum spaces. Subgroups and quotient spaces of quantum $SU(2)$ and $SO(3)$ groups, \emph{Commun. Math. Phys.} \textbf{170} (1995), 1--20
\bibitem{Pop1} S. Popa and A. Wassermann, Actions of compact Lie groups on von Neumann algebras, \emph{C. R. Acad. Sci. Paris} \textbf{315} (1) (1992), 421--426.
\bibitem{Rie1} M. Rieffel, Proper actions of groups on C$^*$-algebras, \emph{Mappings of operator algebras} (Philadelphia 1988), 141--182, \emph{Progr. Math.} \textbf{84}, Birkhauser,
    Boston, MA (1990).
\bibitem{Rob1} J. E. Roberts and L. Tuset, On the equality of {$q$}-dimension and intrinsic dimension, \emph{J. Pure Appl. Algebra} \textbf{156} (2-3) (2001), 329--343.
\bibitem{Saw1} S. Sawin, Subfactors constructed from quantum groups, \emph{Amer. J. Math.} \textbf{117} (1995), 1349--1369.
\bibitem{Sch1} P. Schauenburg, Hopf-Galois and Bi-Galois extensions, \textit{Galois theory, Hopf algebras, and semiabelian categories,
Fields Inst. Comm.} \textbf{43}, AMS (2004), 469--515.
\bibitem{Schn1} H.-J. Schneider, Principal homogeneous spaces for arbitrary Hopf algebras, \emph{Israel J. Math.} \textbf{72}, 1-2 (1990), 167--195.
\bibitem{Ued1} Y. Ueda, On the fixed-point algebra under a minimal free product-type action of the quantum group {${\rm SU}\sb q(2)$}, \emph{Internat. Math. Res. Notices} (1) (2000),
    35--56.
\bibitem{Wah1} C. Wahl, Index theory for actions of compact Lie groups on C$^*$-algebras, \emph{J. Operator Theory} \textbf{63} (1) (2010), 216--242.
\bibitem{Was1} A. Wassermann, Ergodic actions of compact groups on operator algebras. I. General theory, \textit{Ann. of Math.} \textbf{130} (2) (1989), 273--319.
\bibitem{Was2} A. Wassermann, Coactions and Yang-Baxter equations for ergodic actions and subfactors, in \emph{Operator algebras and applications, Vol. 2}, vol. \textbf{136} of
    \emph{London Math. Soc. Lecture Note Ser.}, Cambridge Univ. Press, Cambridge, 203--236
\bibitem{Wat1} Y. Watatani, Index for C$^*$-subalgebras, \emph{Memoir Amer. Math. Soc.} \textbf{83} (424) (1990), 1--117.
\bibitem{Wen1} H. Wenzl, C$^*$-tensor categories from quantum groups, \emph{J. Amer. Math. Soc.} \textbf{11} (1998), 261--282.
\bibitem{Wor1} S.L. Woronowicz, Compact quantum groups, \emph{in ``Sym\'{e}tries quantiques'' (Les Houches, 1995)}, North Holland, Amsterdam (1998), 845--884.



\end{thebibliography}
\end{document}